\pgfplotsset{compat=1.10}
\newcommand{\calR}{\mathcal{R}}
\newcommand{\calD}{\mathcal{D}}
\newcommand{\calB}{\mathcal{B}}
\newcommand{\calW}{\mathcal{W}}
\newcommand{\calX}{\mathcal{X}}
\newcommand{\calG}{\mathcal{G}}
\newcommand{\calV}{\mathcal{V}}
\newcommand{\calS}{\mathcal{S}}
\newcommand{\calH}{\mathcal{H}}
\newcommand{\calM}{\mathcal{M}}
\newcommand{\calJ}{\mathcal{J}}
\newcommand{\calU}{\mathcal{U}}
\newcommand{\calI}{\mathcal{I}}
\newcommand{\calP}{\mathcal{P}}
\newcommand{\calQ}{\mathcal{Q}}
\newcommand{\calF}{\mathcal{F}}
\newcommand{\calC}{\mathcal{C}}
\newcommand{\calA}{\mathcal{A}}
\newcommand{\calT}{\mathcal{T}}
\newcommand{\calE}{\mathcal{E}}
\newcommand{\calZ}{\mathcal{Z}}
\newcommand{\RR}{\mathbb{R}}
\newcommand{\NN}{\mathbb{N}}
\newcommand{\ZZ}{\mathbb{Z}}
\newcommand{\prefix}{\mathsf{Prefix}}
\newcommand{\rank}{\mathsf{Rank}_{\calI}}
\newcommand{\apportiong}{\calA_{\gamma}}
\newcommand{\iparty}{\calQ(\mathcal{I})}
\newcommand{\party}{\mathsf{party}}
\newcommand{\votes}{\mathsf{votes}}
\newcommand{\type}{\mathsf{type}}
\newcommand{\ftype}{\mathsf{f}}
\newcommand{\pparty}{\mathsf{p}}
\newcommand{\qparty}{\mathsf{q}}
\newcommand{\gtype}{\mathsf{g}}
\newcommand{\mtype}{\mathsf{m}}
\newtheorem{theorem}{Theorem}
\newtheorem{lemma}{Lemma}
\newtheorem{corollary}{Corollary}
\newtheorem{proposition}{Proposition}
\newtheorem{definition}{Definition}
\newtheorem{claim}{Claim}
\NewDocumentCommand{\LeftComment}{s m}{%
  \Statex \IfBooleanF{#1}{\hspace*{\ALG@thistlm}}\(\triangleright\) #2}
\newlength{\algofontsize}
\begin{document}
	\algrenewcommand\algorithmicrequire{\textbf{Input:}}
	\algrenewcommand\algorithmicensure{\textbf{Output:}}
	
	\title{Apportionment with Parity Constraints}
	
	\author{Claire Mathieu
	\thanks{CNRS, IRIF,
		Université de Paris, France. {\tt claire.mathieu@irif.fr}}
	\and 	Victor Verdugo		
	\thanks{Institute of Engineering Sciences, Universidad de O'Higgins, Chile. {\tt victor.verdugo@uoh.cl}}
	}
\date{\vspace{-1em}}
\maketitle
\begin{abstract}
In the classic apportionment problem the goal is to decide how many seats of a parliament should be allocated to each party as a result of an election.
The divisor methods provide a way of solving this problem by defining a notion of proportionality guided by some rounding rule.
Motivated by recent challenges in the context of electoral apportionment, we consider the question of how to allocate the seats of a parliament under parity constraints between candidate types (e.g. equal number of men and women elected) while at the same time satisfying party proportionality. 

We consider two different approaches for this problem.
The first mechanism, that follows a greedy approach, corresponds to a recent mechanism used in the Chilean Constitutional Convention 2021 election.
We analyze this mechanism from a theoretical point of view.
The second mechanism follows the idea of biproportionality introduced by Balinski and Demange [Math. Program. 1989, Math. Oper. Res. 1989].
In contrast with the classic biproportional method by Balinski and Demange, this mechanism is ruled by two levels of proportionality: Proportionality is satisfied at the level of parties by means of a divisor method, and then biproportionality is used to decide the number of candidates allocated to each type and party.
We provide a theoretical analysis of this mechanism, making progress on the theoretical understanding of methods with two levels of proportionality.
A typical benchmark used in the context of two-dimensional apportionment is the {\it fair share} (a.k.a {\it matrix scaling}), which corresponds to an ideal fractional biproportional solution.
We provide lower bounds on the distance between these two types of solutions, and we explore their  consequences in the context of two-dimensional apportionment.
\end{abstract}

\thispagestyle{empty}
\newpage


\section{Introduction}

How many seats should be allocated to each political party in an election? This basic question plays a fundamental role in the political organization of societies, and its study has a long and rich history.
The idea of proportionality has been at the core of electoral apportionment methods over the last two centuries, and this notion is captured by the {\it divisor methods}.
These methods are specified by rounding rules with the goal of assigning to each political party a number of seats that is proportional to the number of votes they get.
In this work we study the setting of an electoral process where, apart from the typical proportionality condition, it is required to achieve parity between the representatives of two parts of the population.
Recently, the Chilean Constitutional Convention election of 2021 implemented a new electoral mechanism with the goal of achieving these two goals: Political proportionality and gender parity.  
In this work we analyze the mechanism used for this election, and we also analyze other mechanism based on {\it biproportionality}, a notion introduced by Balinski and Demange that extends the classic divisor methods to a two-dimensional setting \cite{balinski1989mathprog,BalinskiDemange1989}.  

\subsection{A Greedy Apportionment Mechanism}

In October 25, 2020, there was a 2-question referendum in Chile to approve the design of a new Political Constitution.
The first question was: {\it Do you want a new Political Constitution?} and the possible answers were {\it yes} and {\it no}.
The winning option was {\it yes} with nearly 80\% of the total number of votes.
The election for the Constitutional Convention, which is the representative body in charge of writing the new constitution, happened in May 15-16, 2021.
The law approved in March 2020 establishes a mechanism by which the seats of the convention in each political district of the country have to be allocated in order to achieve gender parity and party proportionality~\cite{ley2020,ley2015}.
In what follows we call this method the {\it greedy \& parity correction mechanism}.
Our first goal is to present, formalize and analyze this algorithm, to obtain structural properties of its output.

In our first result we analyze this mechanism from a structural point of view, showing that it satisfies a set of desired properties for an apportionment method.
Then, we study this mechanism from an optimality point of view.
That is, can we show that the greedy \& parity correction mechanism is optimal for some natural objective? We provide a positive answer for this question by showing that this mechanism computes an optimal solution for a certain natural objective, where the goal is to elect a set of representatives that maximize the number of votes obtained, while at the same time achieving proportionality at the party level.
The analysis of the mechanism can be found in Section~\ref{sec:chilean-algorithm}.

\subsection{A Biproportional Apportionment Mechanism}

The notion of proportionality and divisor methods can be extended to the case in which we have two dimensions that rule the apportionment instance.
In the setting of the present paper, the parties represent one dimension of the problem and the types represent a second dimension of the problem.
Balinski and Demange formally defined the notion  of biproportionality for integral solutions and gave a complete characterization~\cite{BalinskiDemange1989,balinski1989mathprog}.
We analyze a method for the apportionment problem with type parity based on biproportionality.
We call this mechanism the {\it biproportional parity mechanism}.
We analyze this mechanism from a structural an axiomatic point of view.
The formal description of the mechanism and its analysis can be found in Section~\ref{sec:biprop-algorithm}.

This mechanism can be seen as a two stage optimization problem in which the first level ensures party proportionality, and then the second level guarantees biproportionality.
We remark that the number of seats for each party are computed from the instance and they are not part of the input, which contrasts with the typical biproportional method where the voting matrix and the seats distributions are fixed and part of the input~\cite{Pukk2017,BalinskiDemange1989}.
This interaction of two levels of apportionment requires a careful treatment,
and we make progress on one of the research directions mentioned by Demange~\cite{demange2013allocating} on the theoretical understanding of the interaction between these two levels of proportionality.

\subsection{Comparing the Fair Share and the Biproportional Solution}

If one relaxes the integrality condition in the biproportional setting, then such a solution is known as {\it fair share} or {\it matrix scaling}, an object that has been studied extensively in the optimization, statistics and algorithms communities.
This solution is not directly implementable in the electoral context since they generally look for integral solutions, but the fair share is used as a benchmark in order to evaluate the quality of a two-dimensional apportionment \cite{maier2010divisor}.
Then, the following question arises naturally: How far are the biproportional solution and the fair share?
We provide lower bounds to answer for this question.
We show that for every integer  value $\ell$ and every rounding rule, we can find an instance where the $\ell_1$ distance between the biproportional solution and the fair share is at least $\ell$.
A second metric of interest is given by the fraction of rows in which the biproportional solution does not respect the rounded fair share. 
We show that this value is in general bounded away from zero.
We provide this analysis in Section \ref{sec:fairness}.

In the one-dimensional case, the fair share is the fractional solution obtained by assigning to each party a number of seats proportional to the number of votes.
Balinski and Young proved that the Jefferson method is the unique divisor method that never violates the fair share rounded down, while the Adams method is the unique that never violates the fair share rounded up~\cite{BYbook}.
Our lower bounds state that these properties are broken in the two-dimensional setting.
Finally, in Section \ref{sec:consequences} we provide some consequences of our results for the quality of apportionment, including an impossibility result for two-dimensional apportionment.
We also compare the two apportionment mechanisms presented in terms of the fair share.
\subsection{Literature Overview}

The divisor methods are widely used at national and regional level and we refer to the book by Balinski and Young \cite{BYbook} and the recent book by Pukelsheim \cite{Pukk2017} for a deep treatment of the theory and use of these methods.
Rote and Zachariasen \cite{RoteZachariazen2007} and later Gaffke and Pukelsheim \cite{gaffke2008vector,gaffke2008divisor} provided a network flow approach for computing a biproportional solution.
Recently, Cembrano et al. \cite{cembrano2021multidimensional} studied the multidimensional generalization of the biproportional method, and provided existence, complexity and algorithmic results.
Other two-dimensional approaches have been considered, most of them based on network flow techniques \cite{pukelsheim2012network,serafini2012parametric,gassner1991biproportional,cox1982controlled}.
We refer to Maier et al. for a real-life benchmark study for the biproportional method \cite{maier2010divisor}.

The basic algorithmic idea behind many algorithms to find a fair share or matrix scaling is based on the Sinkhorn alternate scaling method \cite{sinkhorn1964relationship}.
Balinski and Demange \cite{balinski1989mathprog} proposed an algorithm for matrix scaling based on this idea and 
their algorithm was later analyzed in terms of computational complexity \cite{kalantari2008complexity}.
Remarkable are the algorithms provided by Kalantari and Khachiyan \cite{kalantari1996complexity} and Nemirovski and Rothblum \cite{nemirovski1999complexity}.
Recently, there has been progress on designing faster algorithms for matrix scaling, motivated by its application on machine learning and the analysis of large data sets \cite{chakrabarty2020better,allen2017much,cohen2017matrix}.
We refer to the survey by Idel for a treatment of the theory, applications, algorithms and a historical overview of matrix scaling \cite{idel2016review}.

\section{Preliminaries}
\label{sec:preliminaries}

Formally, consider an election with $n$ political parties and a set of candidates $\calC$.
Each candidate  $c\in\calC$ belongs to exactly one of the parties, $\party_{\calI}(c)\in [n]$, the candidate garners a certain number of votes, $\votes_{\calI}(c)\in \ZZ_+$ and  is of one of two types, $\type_{\calI}(c)\in \{\ftype,\mtype\}$.	
Finally, we are given a strictly positive integer number $h$ corresponding to the total number of seats to be allocated, called the house size.
In the {\it apportionment problem with type parity}, 
we have to find an allocation of the candidates filling every seat and satisfying the following conditions. 
The first condition is that the total number of seats must be occupied by an equal number of candidates from each type to the extent possible.
That is, each of the $h$ available seats is filled and when $h$ is even the total number of seats occupied for the candidates of type $\ftype$ is equal to the total number of seats occupied by the candidates of type $\mtype$;
when $h$ is odd, the amounts differ by exactly one seat. 
The second condition is that the number of candidates allocated from each party is proportional to the total number of votes obtained by the party, a notion that is formally captured by a divisor method.

\subsection{Divisor Methods and Signpost Sequences}
In the most basic form of the {\it apportionment problem}, the input is given by a pair $(\calQ,h)$ where $\calQ$ is a vector of dimension $n$, encoding the number of votes garnered by each party, and $h$ is a strictly positive integer number representing the house size. 
The output, deciding how many of the $h$ seats are given to each party, is formally described by a vector $\calS$ of dimension $n$ such that $\calS_i$ is a non-negative integer  for every $i\in [n]$ and $\sum_{i\in [n]}\calS_i=h$.
Clearly, such a vector $\calS$ always exists, but usually it is required that the solution $\calS$ satisfies certain desirable properties that restrict the set of feasible solutions.

One of the most studied and widely used methods was stated by Thomas Jefferson in 1792:
For each party $i\in [n]$ let $\calS_i=\calR (\calQ_i/\lambda)$  with $\lambda\in \RR_+$ s.t. $\sum_{i\in [n]}\calR( \calQ_i/\lambda)=h$,
where $\calR$ is the rounding function defined as follows: for every $x\in \RR$ we have that $\calR(x)\in \ZZ$ and $x-1\le \calR(x)\le x$.
Observe that when $x$ is fractional the rounding function coincides with the floor of $x$.
This method belongs to a broader family known as {\it divisor methods}, and they are used in various countries at national and regional level~\cite{BYbook,Pukk2017}.
Their main feature is the fact that they capture the notion of proportionality in the case where allocations have to be integral.
Formally, we say that a function $\gamma:\NN\to \RR_+$ is a {\it signpost sequence} if it satisfies the following properties:
\begin{enumerate}[label=(\alph*)]
	\item $\gamma(0)=0$.\label{divisor1}
	\item For every $n\ge 1$ we have that $\gamma(n)\in [n-1,n]$.\label{divisor2}
	\item If there exists $k\ge 2$ such that $\gamma(k)=k-1$, then $\gamma(\ell)<\ell$ for every $\ell\ge 1$.
	If there exists $k\ge 1$ such that $\gamma(k)=k$, then $\gamma(\ell)>\ell-1$ for every $\ell\ge 2$.\label{divisor3}
\end{enumerate}
In particular, every signpost sequence $\gamma$ is strictly increasing over the strictly positive integers.
Every signpost sequence $\gamma$ has a corresponding {\it rounding function} $\calR_{\gamma}:\RR_+\to \NN$ defined as follows: $\calR_{\gamma}(0)=\{0\}$, $\calR_{\gamma}(t)=\{n\}$ when $t\in (\delta(n),\delta(n+1))$ and $\calR_{\gamma}(t)=\{n-1,n\}$ when $t=\gamma(n)>0$.
For every signpost sequence $\gamma$ we can construct an apportionment method for the instances $(\calQ,h)$ as follows: For each party $i\in [n]$ let $\calS_i=\calR_{\gamma} (\calQ_i/\lambda)$ with $\lambda\in \RR_+$ such that $\sum_{i\in [n]}\calR_{\gamma}( \calQ_i/\lambda)=h$.
We remark that this solution is not necessarily unique, and
we denote by $\calA_{\gamma}(\calQ,h)$ the set of solutions.
The multiplicity of solutions occurs in very specific cases and for most of the instances observed in practice we have that $\calA_{\gamma}(\calQ,h)$ is uniquely defined.
The Jefferson method is the divisor method associated to the signpost sequence $\gamma(n)=n$ for every positive integer $n$.
For a detailed treatment of apportionment theory and divisor methods we refer to the book of Balinski and Young~\cite{BYbook}.
The following lemma summarizes a few properties of the divisor methods that we use in our analysis, and its proof can be found in the Appendix.
\begin{lemma}
	\label{lem:jefferson-one-dim}
	Let $(\calQ,h)$ be an instance of the apportionment problem and let $\gamma$ be a signpost sequence.
	Then, the following holds.
	\begin{enumerate}[label=(\alph*)]
		\item Suppose that $(\calQ,h)$ is such that $\sum_{i\in [n]}\calQ_i/\lambda=h$ for some $\lambda>0$ and such that $\calQ_i/\lambda\in \ZZ$ for every $i\in [n]$.
		Then, $\apportiong(\calQ,h)=\{\calQ/\lambda\}$. \label{jeff-proportional}
		\item For every positive real $\alpha>0$, we have that $\apportiong(\calQ,h)=\apportiong(\alpha\calQ,h)$. \label{jeff-scaling}
		\item Suppose that $(\calQ,h)$ and $(\calQ',h)$ are such that $\calQ_{\pparty}>\calQ'_{\pparty}$ for some party $\pparty\in [n]$ and $\calQ_{i}=\calQ'_{i}$ for every $i\ne \pparty$. 
		Then, for every $\calJ\in \apportiong(\calQ,h)$ and $\calJ'\in \apportiong(\calQ',h)$ we have that $\calJ_{\pparty}\ge \calJ'_{\pparty}$.\label{jeff-monotone}
		\item Suppose that $(\calQ,h)$ and $(\calQ',h)$ are such that $\calQ_{\pparty}>\calQ'_{\pparty}$ for some party $\pparty\in [n]$ and $\calQ_{i}=\calQ'_{i}$ for every $i\ne \pparty$. 
		Then, for every $\calJ\in \apportiong(\calQ,h)$ there exists $\calJ'\in \apportiong(\calQ',h)$ such that the following holds: $\calJ_{\pparty}\ge \calJ'_{\pparty}$ and for every $i\ne \pparty$ we have that $\calJ_{i}\le \calJ'_{i}$. \label{jeff-monotone-strong}
	\end{enumerate}	
\end{lemma}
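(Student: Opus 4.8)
The plan is to organize the whole proof around the \emph{inequality form} of the rounding operator: for $t>0$ and an integer $m\ge 0$, one has $m\in\calR_\gamma(t)$ if and only if $\gamma(m)\le t\le\gamma(m+1)$ — this is immediate from the definition of $\calR_\gamma$ together with the facts (from \ref{divisor1}--\ref{divisor2}) that $\gamma$ is strictly increasing on the positive integers and that $\gamma(m)\to\infty$ — while $\calR_\gamma(0)=\{0\}$ pins any zero-vote party to $0$ seats. Hence $\calS\in\apportiong(\calQ,h)$ if and only if there is a divisor $\lambda>0$ with $\gamma(\calS_i)\le\calQ_i/\lambda\le\gamma(\calS_i+1)$ for all $i$ and $\sum_i\calS_i=h$. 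Parts \ref{jeff-scaling}, \ref{jeff-proportional} and \ref{jeff-monotone} then follow by short manipulations of these inequalities; part \ref{jeff-monotone-strong} requires an additional perturbation argument.

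Part \ref{jeff-scaling} is immediate: a witness $\lambda$ for $\calQ$ becomes the witness $\alpha\lambda$ for $\alpha\calQ$, and conversely, since $\calQ_i/\lambda=(\alpha\calQ_i)/(\alpha\lambda)$. For part \ref{jeff-proportional}, write $q_i:=\calQ_i/\lambda$; since each $q_i$ is a non-negative integer and $\gamma(m)\le m\le\gamma(m+1)$ for every integer $m\ge 0$, the vector $\calQ/\lambda$ is itself a witnessed solution, so $\calQ/\lambda\in\apportiong(\calQ,h)$, and the content is uniqueness. If $\calS$ is another solution with divisor $\mu$, then using $\sum_i\calS_i=\sum_iq_i=h$ there are coordinates $p,r$ with $\calS_p>q_p$ and $\calS_r<q_r$ (and then $\calQ_p,\calQ_r>0$, so $q_p,q_r\ge 1$). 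Comparing divisors coordinatewise yields $\mu\le\lambda$ from $p$ and $\mu\ge\lambda$ from $r$, hence $\mu=\lambda$; substituting back forces $\gamma(q_p+1)=q_p$ (so $\gamma$ attains its lower bound at $q_p+1\ge 2$) and $\gamma(q_r)=q_r$ (so $\gamma$ attains its upper bound at $q_r\ge 1$), contradicting \ref{divisor3}.

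For part \ref{jeff-monotone}, suppose toward a contradiction that $\calJ_\pparty<\calJ'_\pparty$. Since both allocations sum to $h$, some $j\ne\pparty$ has $\calJ_j>\calJ'_j$, so $\calJ_j\ge 1$ and hence $\calQ_j=\calQ'_j>0$. With $\lambda,\lambda'$ the respective divisors, the chain $\calQ_\pparty/\lambda\le\gamma(\calJ_\pparty+1)\le\gamma(\calJ'_\pparty)\le\calQ'_\pparty/\lambda'$ together with $\calQ_\pparty>\calQ'_\pparty$ gives $\lambda'<\lambda$, whereas $\calQ_j/\lambda\ge\gamma(\calJ_j)\ge\gamma(\calJ'_j+1)\ge\calQ'_j/\lambda'=\calQ_j/\lambda'$ gives $\lambda'\ge\lambda$, a contradiction.

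Finally, part \ref{jeff-monotone-strong} strengthens \ref{jeff-monotone}, and I would prove it by a monotone perturbation argument. Decreasing party $\pparty$'s votes continuously from $\calQ_\pparty$ down to $\calQ'_\pparty$ while holding all other entries fixed, only the quotients $\calQ_\pparty/\gamma(k)$ move, and only downward; tracking a coherent family of divisor-method solutions that starts at $\calJ$, every transition therefore transfers a single seat \emph{away} from party $\pparty$ to some other party, so along the path the $\pparty$-th coordinate is non-increasing and every other coordinate is non-decreasing, and the endpoint is the desired $\calJ'\in\apportiong(\calQ',h)$. The one genuinely delicate point — and the main obstacle — is making ``coherent family through ties'' rigorous, since $\apportiong$ can be multivalued; I would handle this either by perturbing to a nearby instance on which $\apportiong$ is single-valued along the whole segment and passing to a limit, or by an explicit exchange argument in which the transferred seat is always the one realized by the currently smallest surviving quotient.
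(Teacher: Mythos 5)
Your parts (a)--(c) are correct, and your route is in fact more self-contained than the paper's: the paper disposes of (a) and (b) as immediate and proves (c) by invoking the population-monotonicity property of divisor methods from Balinski--Young, whereas you derive everything directly from the min--max characterization $\gamma(\calS_i)\le \calQ_i/\lambda\le\gamma(\calS_i+1)$. Your uniqueness argument in (a) --- forcing $\mu=\lambda$ and then deriving $\gamma(q_p+1)=q_p$ with $q_p+1\ge 2$ together with $\gamma(q_r)=q_r$ with $q_r\ge 1$, contradicting the disjunction property \ref{divisor3} --- is exactly right, as is the two-sided divisor comparison in (c).

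The genuine gap is in part \ref{jeff-monotone-strong}. What you give there is a plan, not a proof, and the difficulty you yourself flag --- selecting a coherent family of solutions through the tie points as $\calQ_{\pparty}$ decreases --- is not a technicality but the entire content of the statement: part \ref{jeff-monotone} already gives $\calJ_{\pparty}\ge\calJ'_{\pparty}$ for \emph{all} pairs of solutions, and \ref{jeff-monotone-strong} is precisely the claim that some particular $\calJ'$ can be selected that is simultaneously dominating in every coordinate $i\ne\pparty$. Neither of your proposed repairs is carried out: the ``perturb to a single-valued segment and pass to a limit'' route still needs an argument that limits of solutions of nearby instances are solutions of the limit instance \emph{and} that the perturbed path can be anchored at the prescribed $\calJ$ (which may itself be only one of several tied solutions of $(\calQ,h)$); the ``explicit exchange argument'' is left entirely unspecified. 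For comparison, the paper resolves (d) by a decomposition that sidesteps tie-tracking altogether: delete party $\pparty$, note that the restriction of $\calJ$ solves the residual instance with house size $h-\calJ_{\pparty}$, apply the house-monotonicity property as the residual house size grows to $h-\calJ'_{\pparty}$ (legitimate because part \ref{jeff-monotone} gives $\calJ_{\pparty}\ge\calJ'_{\pparty}$) to obtain a coordinatewise-dominating residual solution, and then reattach party $\pparty$ with $\calJ'_{\pparty}$ seats via the uniformity property of divisor methods. If you prefer to keep your perturbation picture, the cleanest rigorous version goes through the highest-averages formulation (the $h$ seats go to the $h$ largest quotients $\calQ_i/\gamma(k)$), under which decreasing $\calQ_{\pparty}$ can only eject $\pparty$-quotients from the top-$h$ set; but even there you must handle ties and the degenerate case $\gamma(1)=0$ explicitly, so as written the proof of (d) is incomplete.
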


\subsection{The Apportionment Problem with Type Parity}
An instance with types is represented as a tuple $\calI=(\calC,\party_{\calI},\votes_{\calI},\type_{\calI},h)$.
We denote by $\calC_i$ the set of candidates from party $i$, by $\calC^t$ the set of candidates with type $t$ and $\calC_i^t=\calC_i\cap \calC^t$ are the candidates in the intersection of both.
Observe that to every instance with types $\calI$ we can associate an instance $\iparty=(\calQ,h)$ of the classic apportionment problem by defining 
$\calQ_i=\sum_{c\in \calC_i}\votes_{\calI}(c)$
for each party $i\in [n]$ and by using the same house size $h$ of $\calI$.
Furthermore, for every instance $\calI$ we are given a total order $(\calC,\succ_{\calI})$ over the set of candidates $\calC$ such that the following holds: For every $c,\overline c\in \calC$ we have that $c\succ_{\calI} \overline{c}$ whenever $\votes_{\calI}(c)>\votes_{\calI}(\overline c)$.
For any subset $D\subseteq \calC$, the {\it top candidate} of $D$ is the largest candidate in $D$ according to $\succ_{\calI}$ and for a natural value $k$, the {\it top/worst $k$} candidates of $D$ correspond to the subset of $k$ largest/smallest candidates of $D$.
The scaling of this instance by a positive real $\alpha$ corresponds to the instance $\alpha\calI=(\calC,\party_{\calI},\alpha\cdot \votes_{\calI},\type_{\calI},h)$.
We assume that the ranking of the candidates according to $\succ_{\alpha \calI}$ is the same obtained from $\succ_{\calI}$.

In the apportionment problem with type parity, given an instance with types and a signpost sequence $\gamma$,
we look for an allocation $\calE:\calC\to \{0,1\}$ satisfying the following conditions:
\begin{enumerate}[label=(\Alph*)]
	\item  {\bf Type Parity.} 
	$\Big|\sum_{c\in \calC^{\ftype}}\calE(c)-\sum_{c\in \calC^{\mtype}}\calE(c)\Big|= (h\text{ mod } 2)$.\label{parity-a}
	\item  {\bf Party Proportionality.} 
	For every $i\in [n]$ we have $\sum_{c\in \calC_i}\calE(c)=\calJ_i,$ where $\calJ\in \apportiong(\iparty) $.\label{parity-b}
\end{enumerate}	
We say that $\calE$ is a {\it feasible solution} for $(\calI,\gamma)$ if there exists $\calJ\in \apportiong(\iparty)$ such that $\calE$ satisfies conditions \ref{parity-a} and \ref{parity-b}.  
We denote by $\calX(\calI,\gamma)$ the set of feasible solutions for $(\calI,\gamma)$. 
We say that an instance $\calI$ is {\it feasible for $\gamma$} if $\calX(\calI,\gamma)\ne \emptyset$.
Unfortunately, there are instances and signpost sequences for which is {\bf not} possible to simultaneously achieve both \ref{parity-a} and \ref{parity-b}.
Consider the following instance: We have two parties $\{1,2\}$ and $h=16$ available seats. 
Party 1 has fourteen candidates in total, eight of type $\ftype$ and six of type $\mtype$.
Party 2 has two candidates in total, one of type $\ftype$ and one of type $\mtype$.
Since $h$ is even, any feasible assignment must allocate eight seats to type $\mtype$, but in total we have only seven candidates of that type.
In the following proposition we show that determining the feasibility of an instance can be done by using linear programming.
\begin{proposition}
	\label{prop:linear-program}
	Let $\calI$ be an instance and let $\calJ\in \apportiong(\iparty)$ for some signpost sequence $\gamma$.
	Let $\calZ(\calI,\calJ,\gamma)$ be the polytope of feasible solutions defined by the following linear program:
	\begin{align}
		\sum_{i\in [n]}y_{it}&\ge \lfloor h/2\rfloor \quad \text{ for each }t\in \{\ftype,\mtype\},\label{LPfeasible1}\\
		y_{i\ftype}+y_{i\mtype}&=\calJ_i \quad\quad\;\; \text{ for every }i\in [n],\label{LPfeasible2}\\
		0\le y_{it}&\le |\calC_i^t|\quad\quad \text{ for every }i\in [n]\text{ and each }t\in \{\ftype,\mtype\}. \label{LPfeasible3}
	\end{align}
	Then, there exists a solution satisfying \ref{parity-a}-\ref{parity-b} if and only if $\calZ(\calI,\calJ,\gamma)\ne \emptyset$.
\end{proposition}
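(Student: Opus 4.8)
The plan is to prove the equivalence in two directions, the crux being (i) translating between an integral allocation $\calE$ and an integer point of the polytope $\calZ(\calI,\calJ,\gamma)$, and (ii) showing that the polytope, whenever nonempty, contains such an integer point. Throughout I will use the identity $\sum_{i\in[n]}\calJ_i=h$, which holds because $\calJ$ is the output of a divisor method on $\iparty$, and the fact that for $S_\ftype,S_\mtype$ with $S_\ftype+S_\mtype=h$ the condition $|S_\ftype-S_\mtype|=h\bmod 2$ is equivalent to $\{S_\ftype,S_\mtype\}=\{\lfloor h/2\rfloor,\lceil h/2\rceil\}$, which in turn is equivalent to $S_\ftype\ge\lfloor h/2\rfloor$ and $S_\mtype\ge\lfloor h/2\rfloor$ (since $S_\ftype+S_\mtype=h$ caps each of them at $\lceil h/2\rceil$, and when $h$ is odd the two values cannot be equal).

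First, suppose $\calE$ satisfies \ref{parity-a}--\ref{parity-b} with this particular $\calJ$. Set $y_{it}=\sum_{c\in\calC_i^t}\calE(c)$ for every $i\in[n]$ and $t\in\{\ftype,\mtype\}$. Then $0\le y_{it}\le|\calC_i^t|$ is immediate, giving \eqref{LPfeasible3}; and $y_{i\ftype}+y_{i\mtype}=\sum_{c\in\calC_i}\calE(c)=\calJ_i$ by \ref{parity-b}, giving \eqref{LPfeasible2}. Writing $S_t=\sum_{i\in[n]}y_{it}=\sum_{c\in\calC^t}\calE(c)$, summing \eqref{LPfeasible2} over $i$ yields $S_\ftype+S_\mtype=h$, while \ref{parity-a} gives $|S_\ftype-S_\mtype|=h\bmod 2$; by the observation above, $S_t\ge\lfloor h/2\rfloor$ for each $t$, which is \eqref{LPfeasible1}. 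Hence $(y_{it})\in\calZ(\calI,\calJ,\gamma)$.

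Conversely, suppose $\calZ(\calI,\calJ,\gamma)\ne\emptyset$. The non-box constraints \eqref{LPfeasible1} and \eqref{LPfeasible2} (replacing each equality in \eqref{LPfeasible2} by two inequalities) have a coefficient matrix in which the column of each variable $y_{it}$ has exactly two nonzero entries, both equal to $1$: one in the row indexed by the type $t$ and one in the row indexed by the party $i$. Since the rows split into a ``type block'' and a ``party block'' with each column contributing one $1$ to each block, this matrix is totally unimodular, and appending the box constraints \eqref{LPfeasible3} preserves total unimodularity. As all right-hand sides $\lfloor h/2\rfloor$, $\calJ_i$, $|\calC_i^t|$ are integers and the polytope is bounded and nonempty, it has an integral vertex $(y_{it})$. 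Define $\calE$ by electing the top $y_{it}$ candidates of $\calC_i^t$ under $\succ_{\calI}$ (any $y_{it}$ of them will do); this is well-defined since $y_{it}\le|\calC_i^t|$. Then $\sum_{c\in\calC_i}\calE(c)=y_{i\ftype}+y_{i\mtype}=\calJ_i$ by \eqref{LPfeasible2}, which is \ref{parity-b}; and with $S_t=\sum_i y_{it}$ we again have $S_\ftype+S_\mtype=h$, so \eqref{LPfeasible1} forces $S_\ftype,S_\mtype\in\{\lfloor h/2\rfloor,\lceil h/2\rceil\}$, hence $|S_\ftype-S_\mtype|=h\bmod 2$ by the observation above, which is \ref{parity-a}.

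The main obstacle is the integrality argument in the converse direction: one must guarantee that the feasible region is genuinely an integral polytope rather than merely containing integer points by accident. The cleanest route is the total-unimodularity remark above, since the constraint matrix is the incidence matrix of a bipartite (parties versus types) transportation-type network; alternatively one could argue directly by an exchange/rounding argument starting from an arbitrary fractional feasible point. Everything else is bookkeeping around the elementary fact that $\sum_i\calJ_i=h$ together with \ref{parity-a} pins $(S_\ftype,S_\mtype)$ down to the pair $\{\lfloor h/2\rfloor,\lceil h/2\rceil\}$, which is exactly what the inequalities \eqref{LPfeasible1} encode.
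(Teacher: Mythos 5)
Your proof is correct and follows essentially the same route as the paper's: translate an allocation $\calE$ into the integer point $y_{it}=\sum_{c\in\calC_i^t}\calE(c)$ and back via top-ranked candidates, and invoke total unimodularity of the bipartite (party/type) incidence structure to guarantee that a nonempty $\calZ(\calI,\calJ,\gamma)$ contains an integral point. You spell out the details (the equivalence of $|S_\ftype-S_\mtype|=h\bmod 2$ with constraint \eqref{LPfeasible1}, and the TU argument) more explicitly than the paper does, but the underlying argument is identical.
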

\begin{proof}
	
	By total unimodularity it holds that $\calZ(\calI,\calJ,\gamma)=\text{conv}(\{y\in \calZ(\calI,\calJ,\gamma):y\text{ is integral}\})$ and therefore $\calZ(\calI,\calJ,\gamma)\ne \emptyset$ if and only if there exists an integral solution $y\in \calZ(\calI,\calJ,\gamma)$.
	On the other hand, for every integral $y$ consider the solution $\calE_{y}:\calC\to \{0,1\}$ defined as follows: For every $i\in [n]$ and each $t\in \{\ftype,\mtype\}$, let $\calE_y(c)=1$ when $c$ belongs to the top $\min\{y_{it},|\calC_i^t|\}$ candidates of $|\calC_i^t|$, and zero otherwise.
	It holds that $\calE_y$ satisfies \ref{parity-a} and \ref{parity-b} for some $\calJ\in \apportiong(\calQ(\calI))$ if and only if $y\in \calZ(\calI,\calJ,\gamma)$.
\end{proof}

\noindent Of particular interest is the subset of instances that satisfy the next condition:
\begin{enumerate}[label=\text{(SoS.\arabic*)},align=right,leftmargin=4.8\labelwidth]
	\item [\bf (Supply Condition)]$\calI$ satisfies the {\it supply condition} if  $|\calC_i^t|\ge \lceil h/2\rceil $ for each $i\in [n]$ and $t \in \{\ftype,\mtype\}$.\label{supply}
\end{enumerate}
In the following lemma we show that instances satisfying the supply condition are feasible for every divisor method $\gamma$ used in condition \ref{parity-b}.
\begin{lemma}
	\label{lem:closed-supply}
	Let $\calI$ be an instance satisfying the supply condition.
	Then, we have $\calZ(\calI,\calJ,\gamma)\ne \emptyset$ for every $\calJ\in \apportiong(\iparty)$.
	In particular, for every signpost sequence $\gamma$ we have $\calX(\calI,\gamma)\ne \emptyset$.
\end{lemma}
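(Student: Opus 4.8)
The plan is to exhibit a single explicit (possibly fractional) point of the polytope $\calZ(\calI,\calJ,\gamma)$ for an arbitrary $\calJ\in\apportiong(\iparty)$, and then to invoke Proposition~\ref{prop:linear-program}. The natural candidate is the \emph{balanced split} $y_{it}=\calJ_i/2$ for every $i\in[n]$ and every $t\in\{\ftype,\mtype\}$, which distributes each party's seat quota evenly between the two types. All the work is then a one-line check of the three families of LP constraints.

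I would verify these in turn. Constraint \eqref{LPfeasible2} holds by construction, since $y_{i\ftype}+y_{i\mtype}=\calJ_i$. For \eqref{LPfeasible1}, summing over $i$ and using that $\calJ$ is an apportionment of $h$ (so $\sum_{i\in[n]}\calJ_i=h$) gives $\sum_{i\in[n]}y_{it}=\tfrac12\sum_{i\in[n]}\calJ_i=h/2\ge\lfloor h/2\rfloor$ for each $t$. For \eqref{LPfeasible3}, nonnegativity is immediate from $\calJ_i\ge 0$; for the upper bound, note that since $\calJ$ partitions $h$ into nonnegative integers we have $\calJ_i\le h$, hence $y_{it}=\calJ_i/2\le h/2\le\lceil h/2\rceil\le|\calC_i^t|$, where the last inequality is exactly the supply condition \ref{supply}. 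Thus $y\in\calZ(\calI,\calJ,\gamma)$, proving $\calZ(\calI,\calJ,\gamma)\ne\emptyset$ for every $\calJ\in\apportiong(\iparty)$.

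For the ``in particular'' statement, recall that a divisor method always returns at least one apportionment, i.e.\ $\apportiong(\iparty)\ne\emptyset$ (this follows from the standard continuity/monotonicity argument in the divisor $\lambda$; see \cite{BYbook}). Fix any $\calJ\in\apportiong(\iparty)$. By the first part $\calZ(\calI,\calJ,\gamma)\ne\emptyset$, so Proposition~\ref{prop:linear-program} yields an allocation satisfying conditions \ref{parity-a} and \ref{parity-b}; hence $\calX(\calI,\gamma)\ne\emptyset$.

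I do not anticipate a genuine obstacle here: the only points that need care are (i) that $\sum_i\calJ_i=h$ and each $\calJ_i\le h$, which is what makes the balanced split respect the capacity bounds $|\calC_i^t|$ under the supply condition, and (ii) that $\apportiong(\iparty)$ is nonempty so that some $\calJ$ can be fed into Proposition~\ref{prop:linear-program}. If one preferred an integral witness directly, bypassing the total-unimodularity argument inside Proposition~\ref{prop:linear-program}, one could round the balanced split instead, assigning $\lceil\calJ_i/2\rceil$ to one type and $\lfloor\calJ_i/2\rfloor$ to the other while alternating the roles of $\ftype$ and $\mtype$ across parties so that each column sum stays within $1$ of $h/2$; but this refinement is not needed for the statement.
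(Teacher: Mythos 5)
Your proof is correct, but it takes a genuinely different route from the paper. The paper establishes nonemptiness of $\calZ(\calI,\calJ,\gamma)$ indirectly via LP duality: it writes down the dual maximization problem \eqref{dualobj}--\eqref{dual3}, combines the two dual constraints to bound $\xi_i$ by $\tfrac12(u_{i\ftype}-\beta_{\ftype}+u_{i\mtype}-\beta_{\mtype})$, and shows the dual objective is bounded above by zero using $\sum_i\calJ_i=h$ and the supply condition, so the primal is feasible. You instead exhibit an explicit primal witness, the balanced split $y_{it}=\calJ_i/2$, and the verification is exactly the three one-line checks you give; the key inequality $\calJ_i/2\le h/2\le\lceil h/2\rceil\le|\calC_i^t|$ uses the supply condition in the same place the paper's dual bound does. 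Your argument is more elementary and shorter (it avoids writing the dual altogether), while still relying on Proposition~\ref{prop:linear-program}'s total unimodularity to pass from the fractional point to an integral allocation, just as the paper does. It is worth noting that your averaging step is, in effect, the primal shadow of the paper's dual manipulation $\xi_i\le\tfrac12(u_{i\ftype}-\beta_{\ftype}+u_{i\mtype}-\beta_{\mtype})$, so the two proofs are dual to one another in a precise sense; yours is the one most readers would find easier to check. Your remark that $\apportiong(\iparty)\ne\emptyset$ is a point the paper glosses over but is needed for the ``in particular'' conclusion, so including it is a small improvement.
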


\begin{proof}
	Let $\gamma$ be a signpost sequence and take an instance $\calI$ satisfying the supply condition.
	We show that $\calZ(\calI,\calJ,\gamma)\ne \emptyset$ for every $\calJ\in \apportiong(\iparty)$ when $\calI$ satisfies the supply condition.
	To show that this polytope is not empty, it is sufficient to prove that the dual of the linear program (\ref{LPfeasible1})-(\ref{LPfeasible3}) is not unbounded. 
	The dual is given by the following linear program:
	\begin{align}
		\text{maximize} \quad  \lfloor h/2\rfloor (\beta_{\ftype}&+\beta_{\mtype})+\sum_{i\in [n]}\calJ_i\xi_i-\sum_{i\in [n]}u_{i\ftype}|\calC_i^{\ftype}|-\sum_{i\in [n]}u_{i\mtype}|\calC_i^{\mtype}|\label{dualobj}\\
		\text{subject to} \quad\quad \beta_{\ftype}+\xi_i&\le u_{i\ftype} \;\quad\hspace{-3pt}\text{ for every }i\in [n],\label{dual1}\\
		\beta_{\mtype}+\xi_i&\le u_{i\mtype} \quad\hspace{-3pt}\text{ for every }i\in [n],\label{dual2}\\
		\beta,u&\ge 0.\label{dual3}
	\end{align}
	We show that for any feasible solution $(\xi,\beta,u)$ the objective value is upper bounded by zero, which suffices to imply that $\calZ(\calI,\calJ,\gamma)\ne \emptyset$.
	Observe that constraints (\ref{dual1})-(\ref{dual2}) implies that for every feasible solution $(\xi,\beta,u)$ and every $i\in [n]$ it holds that $\xi_i\le \frac{1}{2}(u_{i\ftype}-\beta_{\ftype}+u_{i\mtype}-\beta_{\mtype}).$
	Then, we have that the objective value of a feasible solution $(\xi,\beta,u)$ can be upper bounded by
	\begin{align*}
		&\lfloor h/2\rfloor (\beta_{\ftype}+\beta_{\mtype})+\frac{1}{2}\sum_{i\in [n]}\calJ_i\Big(u_{i\ftype}-\beta_{\ftype}+u_{i\mtype}-\beta_{\mtype}\Big)-\sum_{i\in [n]}u_{i\ftype}|\calC_i^{\ftype}|-\sum_{i\in [n]}u_{i\mtype}|\calC_i^{\mtype}|\\
		&\le \Big(\lfloor h/2\rfloor-h/2\Big) (\beta_{\ftype}+\beta_{\mtype})+\frac{1}{2}\sum_{i\in [n]}\Big(\calJ_i-2\lceil h/2\rceil\Big)\Big(u_{i\ftype}+u_{i\mtype}\Big)\le 0,
	\end{align*}
	since $\calI$ satisfies the supply condition, $\sum_{i\in [n]}\calJ_i=h$ and the last inequality holds from $\beta,u\ge 0$ and $\calJ_i\le h\le 2\lceil h/2\rceil$ for every $i\in [n]$.
	Proposition \ref{prop:linear-program} implies that for every signpost sequence $\gamma$ we have that $\calX(\calI,\gamma)\ne \emptyset$.
\end{proof}
\begin{definition}
	Let $\calW$ be a subset of instances and let $\Delta$ be the set of signpost sequences.
	A set valued function $\calM$ from $\calW\times \Delta$ is a valid apportionment mechanism over $\calW$ if for every signpost sequence $\gamma$ the following holds: $\calM(\calI,\gamma)\subseteq \calX(\calI,\gamma)$  for every $\calI\in \calW$, and $\calM(\calI,\gamma)\ne \emptyset$ when $\calX(\calI,\gamma)\ne \emptyset$.
\end{definition}

Given an instance $\calI$, consider an instance $\calG=(\calC,\party_{\calG},\votes_{\calG},\type_{\calG},h)$ such that the following holds:  $\type_{\calI}=\type_{\calG}$, $\party_{\calI}=\party_{\calG}$, there exists a candidate $c\in \calC$ with $\votes_{\calG}(c)> \votes_{\calI}(c)$ and $\votes_{\calI}(s)= \votes_{\calG}(s)$ for every $s\in \calC$ with $s\ne c$. 
We say that $\calG$ is a {\it voting increment of $\calI$ in $c\in \calC$}.

\begin{definition}
	Let $\calW$ be the subset of instances satisfying the supply condition.
	We say that a set valued function $\calM$ from $\calW\times \Delta$ is {\it $\gamma$-satisfactory over $\calW$} if $\calM$ is a valid apportionment mechanism over $\calW$ and furthermore it satisfies the following properties: 
	\begin{enumerate}[label=(\Roman*), ref=(\Roman*)]
		\item {\bf Exactness.} Let $\calI\in \calW$ be an instance such that $\votes_{\calI}\in \calX(\calI,\gamma)$. 
		Then  $\calM(\calI,\gamma)=\{\votes_{\calI}\}$.\label{axiom2}
		\item {\bf Scaling.} For every instance $\calI\in \calW$, every signpost sequence $\gamma$, and every positive real $\alpha$, we have $\calM(\calI,\gamma)=\calM(\alpha\calI,\gamma)$.\label{axiom3}
		\item {\bf Monotonicity.}
		Let $\calI\in \calW$ be an instance with set of candidates $\calC$ and let $\calG$ and $c\in \calC$ such that $\calG$ is a voting increment of $\calI$ in $c\in \calC$.
		Then, for every $\calE_{\calI}\in \calM(\calI,\gamma)$ there exists $\calE_{\calG}\in \calM(\calG,\gamma)$ such that $\calE_{\calG}(c)\ge \calE_{\calI}(c)$.\label{axiom4}
	\end{enumerate}
\end{definition}

\noindent The first property states that when the votes of the instance is a already a feasible solution, the mechanism must return this as the solution.
The second property states that the solution returned by the mechanism is invariant under votes scaling, and
the third property states that a candidate does not lose a seat as a consequence of garnering at least one additional vote, while the rest of the candidates remains the same.
The following proposition states that feasibility is preserved under scaling and voting increments for the instances satisfying the supply condition. 
\begin{proposition}
	\label{prop:closedness}
	Suppose that $\calI$ is feasible for a signpost sequence $\gamma$.
	Then, for every positive real $\alpha$, we have that $\alpha \calI$ is feasible for a signpost sequence $\gamma$.
	Furthermore, when $\calI$ satisfies the supply condition, we have that every voting increment $\calG$ of $\calI$ is feasible for every signpost sequence $\gamma$.
\end{proposition}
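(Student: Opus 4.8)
The plan is to reduce both claims to the linear-programming characterization of feasibility in Proposition~\ref{prop:linear-program}, combined with the scaling invariance of divisor methods from Lemma~\ref{lem:jefferson-one-dim}\ref{jeff-scaling}. The guiding observation is that neither scaling the votes nor performing a voting increment alters the combinatorial data on which the feasibility polytope actually depends.

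For the first claim, fix a signpost sequence $\gamma$ and a positive real $\alpha$. The scaled instance $\alpha\calI$ has exactly the same candidate set, party function, type function and house size as $\calI$; only the vote counts are multiplied by $\alpha$. Hence $\calQ(\alpha\calI)=\alpha\,\calQ(\calI)$, and Lemma~\ref{lem:jefferson-one-dim}\ref{jeff-scaling} gives $\apportiong(\calQ(\alpha\calI))=\apportiong(\alpha\,\calQ(\calI),h)=\apportiong(\calQ(\calI),h)=\apportiong(\iparty)$. Moreover, for any party-seat vector $\calJ$, the polytope $\calZ(\alpha\calI,\calJ,\gamma)$ defined by (\ref{LPfeasible1})-(\ref{LPfeasible3}) depends only on $h$, on $\calJ$, and on the cardinalities $|\calC_i^t|$, none of which is affected by scaling the votes; therefore $\calZ(\alpha\calI,\calJ,\gamma)=\calZ(\calI,\calJ,\gamma)$. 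Since $\calI$ is feasible for $\gamma$, Proposition~\ref{prop:linear-program} yields some $\calJ\in\apportiong(\iparty)$ with $\calZ(\calI,\calJ,\gamma)\ne\emptyset$; the same $\calJ$ lies in $\apportiong(\calQ(\alpha\calI))$ and satisfies $\calZ(\alpha\calI,\calJ,\gamma)\ne\emptyset$, so by Proposition~\ref{prop:linear-program} again, $\alpha\calI$ is feasible for $\gamma$.

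For the second claim, let $\calG$ be a voting increment of $\calI$ in some candidate $c$. By definition, $\calG$ shares with $\calI$ the candidate set, the party function, the type function and the house size; in particular $|\calC_i^t|$ is unchanged for every $i\in[n]$ and $t\in\{\ftype,\mtype\}$. Consequently, if $\calI$ satisfies the supply condition then so does $\calG$. Lemma~\ref{lem:closed-supply} applied to $\calG$ then gives $\calX(\calG,\gamma)\ne\emptyset$ for every signpost sequence $\gamma$, i.e. $\calG$ is feasible for every $\gamma$.

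I do not expect a genuine obstacle: both parts are structural bookkeeping once one isolates the data that the feasibility polytope depends on. The only points needing a moment of care are that Lemma~\ref{lem:jefferson-one-dim}\ref{jeff-scaling} requires $\alpha>0$ (which is given) and that feasibility, as defined by \ref{parity-a}--\ref{parity-b}, does not involve the candidate ranking $\succ_{\calI}$ at all, so the stipulated invariance of $\succ_{\alpha\calI}$ is irrelevant to this argument.
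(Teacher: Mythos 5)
Your proposal is correct and follows essentially the same route as the paper: part one rests on the scaling invariance $\apportiong(\calQ(\alpha\calI))=\apportiong(\iparty)$ from Lemma~\ref{lem:jefferson-one-dim}\ref{jeff-scaling} together with the observation that the remaining feasibility data is unchanged (the paper phrases this directly as $\calX(\calI,\gamma)\subseteq\calX(\alpha\calI,\gamma)$ rather than passing through the polytope $\calZ$, but this is a cosmetic difference), and part two is identical, noting that a voting increment preserves the supply condition and invoking Lemma~\ref{lem:closed-supply}.
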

\begin{proof}
	Observe that thanks to Lemma \ref{lem:jefferson-one-dim} we have that $\apportiong(\calQ(\calI))=\apportiong(\calQ(\alpha \calI))$, and therefore for every $\calX(\calI,\gamma)\subseteq \calX(\alpha \calI,\gamma)$.
	Therefore, when $\calI$ is feasible for $\gamma$, we have that $\alpha \calI$ is feasible for $\gamma$ as well.
	When $\calI$ satisfies the supply condition, we have that any voting increment $\calG$ satisfies the supply condition as well, and therefore by Lemma \ref{lem:closed-supply} we have that $\calX(\calG,\gamma)\ne \emptyset$ for every $\gamma$.
\end{proof}

\section{The Greedy \& Parity Correction Mechanism}
\label{sec:chilean-algorithm}

In this section we describe and analyze the greedy \& parity correction mechanism.
Before presenting the mechanism we need the following definition.

\begin{definition}
	Given an instance $\calI$ and $\calJ\in \apportiong(\calQ(\calI))$ we define the {\it type oblivious solution} $\calT_{\calJ}$ as follows: For every party $i\in [n]$ let $\calT_{\calJ}(c)=1$ for the top $\calJ_i$ candidates of $\calC_i$ and let $\calT_{\calJ}(s)=0$ otherwise.
\end{definition}
That is, the type oblivious solution allocates as many of the top candidates of each party as required according to $\calJ$,  regardless of the candidate types.
Thus it satisfies the party proportionality property \ref{parity-b}, but in general  it does not satisfythe type parity property \ref{parity-a}.
The mechanism performs two phases: In the first phase, it computes a type oblivious solution.
When type parity is not satisfied by the type oblivious solution, there is a second phase in which candidates from the over-represented type, from worst to top on the ranking, are replaced by a top available candidate from the same party and from the under-represented type.
The full description of this procedure can be found in Algorithm \ref{alg:chilean}.

\setcounter{algorithm}{-1}
\begin{algorithm}[H]
	\caption{The Greedy \& Parity Correction Mechanism}
	\begin{algorithmic}[1]
		\Require{An instance $\calI$ satisfying the supply condition and $\calJ\in \apportiong(\iparty)$.}
		\Ensure{A  solution $\calE_{\calJ}$ satisfying condition \ref{parity-a} and \ref{parity-b}.}
		\LeftComment{{\bf Phase 1: Greedy}.}
		\vspace{.1cm}
		\State Compute the type oblivious solution $\calT_{\calJ}$. 
		\If{$|\sum_{c\in \calC^{\mtype}}\calT_{\calJ}(c)-\sum_{c\in \calC^{\ftype}}\calT_{\calJ}(c)|= (h\text{ mod } 2)$} we {\bf stop} and return $\calT_{\calJ}$.
		\EndIf		
		\State Otherwise, let $t^{\star}$ be the over represented type in $\calT_{\calJ}$ and let $t_{\star}$ be the under represented type; continue to Phase 2 and initialize $\calE\leftarrow \calT_{\calJ}$.
		\vspace{.1cm}
		\LeftComment{{\bf Phase 2: Parity Correction}.}
		\vspace{.1cm}
		\While{$|\sum_{c\in \calC^{\mtype}}\calE(c)-\sum_{c\in \calC^{\ftype}}\calE(c)|\neq (h\text{ mod } 2)$}
		\State Let $s\in \calC^{t^{\star}}$ be the worst ranked in $\{c\in \calC^{t^{\star}}:\calE(c)=1\},$ and let $i\in [n]$ be such that $c\in \calC_{i}$.
		\State Let $\overline s\in \calC_i^{t_{\star}}$ be the best ranked in $\{c\in \calC_i^{t_{\star}}:\calE(c)=0\}$ when this subset is not empty. 
		\State We update the allocation: $\calE(s)\leftarrow 0$ and $\calE(\overline s)\leftarrow 1$.
		That is, $\overline s$ replaces $s$ in the allocation.
		\EndWhile
		\State Return $\calE_{\calJ}= \calE$.
	\end{algorithmic}
	\caption{\small Greedy \& Parity Correction \label{alg:chilean}}
\end{algorithm}
The {\it greedy \& parity correction mechanism}, denoted by $\calM^{G}$, is defined as follows: For every pair $(\calI,\gamma)$ we have that $\calM^{G}(\calI,\gamma)=\{\calE_{\calJ}:\calJ\in \apportiong(\iparty)\}$ where $\calE_{\calJ}$ is the solution returned by Algorithm \ref{alg:chilean}.
We remark that when the set $\apportiong(\iparty)$ is just a singleton, the apportionment mechanism will be defined uniquely for the instance.
We have observed that the supply condition guarantees the feasibility of an instance (Lemma \ref{lem:closed-supply})
and now we show it is also necessary for Algorithm \ref{alg:chilean} to terminate with a solution. 
For that, consider the following instance $\calI$ with two parties $\{1,2\}$ and $h=8$ available seats, where each party has three candidates of each type.
The information per party is summarized below.\\
	
	\begin{minipage}[c]{0.5\linewidth}
		\small 
		\begin{tabular}{|c|c|c|c|c|c|c|}
			\hline
			Party 1& $c_1$ & $c_2$ & $c_3$ & $c_4$ & $c_5$ & $c_6$\\
			$\votes_{\calI}$& $4$ & $3$ & $1$ & $165$ & $164$ & $163$\\
			$\type_{\calI}$& $\mtype$ & $\mtype$ & $\mtype$ & $\ftype$ & $\ftype$ & $\ftype$\\
			\hline
		\end{tabular}
	\end{minipage}
	\begin{minipage}[c]{0.5\linewidth}
		\small 
		\begin{tabular}{|c|c|c|c|c|c|c|}
			\hline
			Party 2& $c_7$ & $c_8$ & $c_9$ & $c_{10}$ & $c_{11}$ & $c_{12}$\\
			$\votes_{\calI}$& $93$ & $92$ & $91$ & $9$ & $8$ & $7$\\
			$\type_{\calI}$& $\mtype$ & $\mtype$ & $\mtype$ & $\ftype$ & $\ftype$ & $\ftype$\\
			\hline
		\end{tabular}
	\end{minipage}
\vspace{.5cm}

Since $h$ is even, four candidates of each type have to be elected. 
Observe that the total number of votes obtained by party 1 is 500 and the total number of votes obtained by party 2 is 300.
Since $h=8$, by Lemma \ref{lem:jefferson-one-dim} \ref{jeff-proportional} we have that for every signpost sequence it holds that $\apportiong(\iparty)=\{(5,3)\}$.
Phase 1 of the Algorithm~\ref{alg:chilean} yields the five seats for party 1 and three seats for party 2.
Therefore, at the end of Phase 1, we have that $\calE(c_j)=1$ for every $j\in [6]\setminus \{3\}$ (party 1) and $\calE(c_j)=1$ for $j\in \{7,8,9\}$ (party 2). 
In total we have five candidates of type $\mtype$ and three of type $\ftype$.

In the first iteration of Phase 2 we select the worst candidate that is currently elected and of type $\mtype$, which is $c_2$, but when the algorithm tries to replace it we have that the pool of candidates $\{s\in \calC_1^{\ftype}:\calE(s)=0\}$ is empty.
Therefore, the algorithm is not able to terminate with a solution meeting the requirements.
In contrast, this instance is feasible for every signpost sequence $\gamma$: It is sufficient to update the solution $\calE$ by doing $\calE(c_9)=0$ and $\calE(c_{10})=1$.
The following is our first result regarding this mechanism.

\begin{theorem}
	\label{thm:chilean-satisfactory}
	For every signpost sequence $\gamma$ we have that $\calM^{G}$ is $\gamma$-satisfactory over the  instances satisfying the supply condition.
\end{theorem}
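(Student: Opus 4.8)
The plan is to check the four requirements in turn: that $\calM^{G}$ is a valid apportionment mechanism over the class $\calW$ of instances satisfying the supply condition, and that it has Exactness~\ref{axiom2}, Scaling~\ref{axiom3}, and Monotonicity~\ref{axiom4}. For validity I would first show that Algorithm~\ref{alg:chilean} always halts with a solution in $\calX(\calI,\gamma)$ when $\calI\in\calW$. Party proportionality~\ref{parity-b} holds throughout: Phase~1 outputs the type oblivious solution $\calT_{\calJ}$, which satisfies~\ref{parity-b} by construction, and every Phase~2 step swaps two candidates of the same party. For termination, each Phase~2 iteration lowers the signed imbalance $D:=\sum_{c\in\calC^{t^{\star}}}\calE(c)-\sum_{c\in\calC^{t_{\star}}}\calE(c)$ by exactly $2$; since $D$ has the parity of $h$ and, inside the loop, satisfies $D>h\bmod 2$ and hence $D\ge 2$, after finitely many steps $D=h\bmod 2$ and the loop exits with~\ref{parity-a} satisfied. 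The only point to verify is that the replacement candidate $\overline s$ always exists, and this is exactly where the supply condition enters: the number of currently elected type-$t_{\star}$ candidates equals $(h-D)/2\le (h-2)/2<\lceil h/2\rceil\le |\calC_i^{t_{\star}}|$, so not all of $\calC_i^{t_{\star}}$ is elected and such an $\overline s$ remains. Hence $\calE_{\calJ}\in\calX(\calI,\gamma)$ and $\calM^{G}(\calI,\gamma)\neq\emptyset$; since $\calX(\calI,\gamma)\neq\emptyset$ always holds on $\calW$ by Lemma~\ref{lem:closed-supply}, $\calM^{G}$ is a valid apportionment mechanism over $\calW$.

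For Exactness, if $\votes_{\calI}\in\calX(\calI,\gamma)$ then $\votes_{\calI}$ is a $\{0,1\}$-valued allocation with $\sum_{c\in\calC_i}\votes_{\calI}(c)=\calJ_i$ for some $\calJ\in\apportiong(\iparty)$, so $\calQ(\calI)=\calJ$ is integral and sums to $h$; Lemma~\ref{lem:jefferson-one-dim}\ref{jeff-proportional} then gives $\apportiong(\iparty)=\{\calJ\}$. Because the votes are $0$ or $1$, in each party the vote-$1$ candidates precede the vote-$0$ candidates in $\succ_{\calI}$ and there are exactly $\calJ_i$ of them, so $\calT_{\calJ}=\votes_{\calI}$; as $\votes_{\calI}$ satisfies~\ref{parity-a} by hypothesis, Algorithm~\ref{alg:chilean} halts at the end of Phase~1 and returns it, so $\calM^{G}(\calI,\gamma)=\{\votes_{\calI}\}$. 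For Scaling, $\calQ(\alpha\calI)=\alpha\,\calQ(\calI)$, hence $\apportiong(\calQ(\alpha\calI))=\apportiong(\iparty)$ by Lemma~\ref{lem:jefferson-one-dim}\ref{jeff-scaling}, and a run of Algorithm~\ref{alg:chilean} depends on the instance only through the partition into parties, the types, the order $\succ$ and the input vector $\calJ$, all unchanged under scaling (we assumed $\succ_{\alpha\calI}=\succ_{\calI}$); so $\calE_{\calJ}$ is the same and $\calM^{G}(\alpha\calI,\gamma)=\calM^{G}(\calI,\gamma)$.

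For Monotonicity the key preliminary is a structural description of $\calE_{\calJ}$: for every party $i$ and type $t$ it elects a prefix of $\calC_i^t$ in the order $\succ_{\calI}$, of some length $N_{i,t}$ with $N_{i,\ftype}+N_{i,\mtype}=\calJ_i$. This follows by induction along the run (the type-$t$ members of the top $\calJ_i$ of $\calC_i$ form a prefix of $\calC_i^t$, and each Phase~2 step deletes the last elected member of $\calC_i^{t^{\star}}$ of the affected party and appends the first unelected member of $\calC_i^{t_{\star}}$), and one also records that the candidates unelected during Phase~2 are the bottom $k$ of the Phase-1-elected type-$t^{\star}$ candidates in $\succ_{\calI}$, where $k$ is the determined number of swaps. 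Now take a voting increment $\calG$ of $\calI$ in $c$, write $p=\party_{\calI}(c)$ and $t_c=\type_{\calI}(c)$, fix $\calE_{\calJ}\in\calM^{G}(\calI,\gamma)$, and use Lemma~\ref{lem:jefferson-one-dim}\ref{jeff-monotone-strong} to pick $\calJ'\in\apportiong(\calQ(\calG))$ with $\calJ'_p\ge\calJ_p$ and $\calJ'_i\le\calJ_i$ for $i\neq p$; set $\calE_{\calG}$ to be the output of Algorithm~\ref{alg:chilean} on $(\calG,\calJ')$. We may assume $\calE_{\calJ}(c)=1$. A voting increment leaves every other candidate's votes fixed, so $c$ only moves up in the order and the relative order of $\calC_p^{t_c}\setminus\{c\}$ is unaffected; thus by the structural description $\calE_{\calJ}(c)=1$ places $c$ among the top $N_{p,t_c}(\calI,\calJ)$ of $\calC_p^{t_c}$ under $\succ_{\calI}$, hence also under $\succ_{\calG}$, and it is therefore enough to prove $N_{p,t_c}(\calG,\calJ')\ge N_{p,t_c}(\calI,\calJ)$, i.e. that party $p$ elects at least as many type-$t_c$ candidates after the increment.

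I expect this last inequality to be the main obstacle, because raising $c$'s vote can change both the Phase-1 block of party $p$ and the identity of the over-represented type, and because passing from $\calJ$ to $\calJ'$ moves seats between parties. I would establish it by a case analysis on which of $\ftype,\mtype$ is over-represented in each of the two runs, using: that $\calJ'_p\ge\calJ_p$ together with $c$ moving up forces the Phase-1 block of party $p$ under $(\calG,\calJ')$ to contain $c$ whenever $c$ lay in the top $\calJ_p$ of $\calC_p$ under $\succ_{\calI}$; that $\calJ'_i\le\calJ_i$ for $i\neq p$ and the prefix structure pin down how the $k$ Phase-2 swaps are distributed across parties; and a monotone accounting of the signed imbalance to rule out a strict drop in $N_{p,t_c}$ — equivalently, a drop would give $N_{p,\bar t_c}(\calG,\calJ')\ge N_{p,\bar t_c}(\calI,\calJ)+1$ (since $\calJ'_p\ge\calJ_p$), which is impossible as the votes and internal $\succ$-order of $\calC_p^{\bar t_c}$ are untouched by the increment while party $p$'s seat count does not decrease. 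Assembling these yields $\calE_{\calG}(c)\ge\calE_{\calJ}(c)$ and completes the proof.
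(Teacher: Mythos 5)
Your treatment of validity, exactness and scaling is correct and follows essentially the same route as the paper (Lemma~\ref{lem:chilean-correct} together with Lemma~\ref{lem:jefferson-one-dim}); your termination argument is in fact more explicit than the paper's, since you quantify why the replacement candidate $\overline s$ always exists: while the loop is active at most $(h-2)/2<\lceil h/2\rceil\le|\calC_i^{t_{\star}}|$ candidates of type $t_{\star}$ are elected, so $\calC_i^{t_{\star}}$ cannot be exhausted.

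The gap is in Monotonicity. Your reduction --- choose $\calJ'$ via Lemma~\ref{lem:jefferson-one-dim}~\ref{jeff-monotone-strong}, observe that each run elects a prefix of each $\calC_p^{t}$, note that $c$ can only move up inside $\calC_p^{t_c}$, and conclude that it suffices to prove $N_{p,t_c}(\calG,\calJ')\ge N_{p,t_c}(\calI,\calJ)$ --- is sound and correctly isolates the combinatorial core of the claim. But the proof of that inequality is only announced as a case analysis, and the one concrete argument you do give is invalid. You claim that a drop in $N_{p,t_c}$ would force $N_{p,\bar t}(\calG,\calJ')\ge N_{p,\bar t}(\calI,\calJ)+1$ and that such an increase is impossible ``as the votes and internal $\succ$-order of $\calC_p^{\bar t}$ are untouched while party $p$'s seat count does not decrease.'' That implication does not hold: $N_{p,\bar t}$ is not determined by the votes of $\calC_p^{\bar t}$ and by $\calJ'_p$ alone --- it also depends on how many Phase-2 swaps land on party $p$, which is governed by global comparisons among the elected candidates of the over-represented type across \emph{all} parties, and on which type is over-represented; both of these can change under the increment and under the passage from $\calJ$ to $\calJ'$. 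Worse, when $\calJ'_p=\calJ_p$ an increase of $N_{p,\bar t}$ is exactly equivalent to a decrease of $N_{p,t_c}$, so the argument rules out the drop by asserting the nonoccurrence of its own restatement; it is circular. To close this you would need to actually control the Phase-2 dynamics --- for instance, prove that Phase 2 removes precisely the bottom $k$ (in the global order $\succ$) of the Phase-1-elected candidates of the over-represented type, and then track how the Phase-1 blocks and that bottom-$k$ set move when $c$'s votes increase and the marginals change from $\calJ$ to $\calJ'$. For comparison, the paper disposes of property~\ref{axiom4} with a short ranking argument (party $p$'s seat total does not decrease, swaps stay within parties, and $c$'s rank only improves) rather than the explicit swap accounting your plan calls for; you have correctly located the hard step, but you have not proved it.
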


Before proving the theorem, in the following lemma we prove that the greedy \& parity correction mechanism is a valid apportionment mechanism.

\begin{lemma}
\label{lem:chilean-correct}
Let $\calI$ be an instance satisfying the supply condition and consider $\calJ\in \apportiong(\iparty)$ for some signpost sequence $\gamma$.
Then, the solution $\calE_{\calJ}$ computed by Algorithm \ref{alg:chilean} satisfies conditions \ref{parity-a} and \ref{parity-b}.  
In particular, the greedy \& parity correction mechanism $\calM^{G}$ is a valid apportionment mechanism over the instances satisfying the supply condition.
\end{lemma}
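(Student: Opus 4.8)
The plan is to establish two facts about Algorithm~\ref{alg:chilean}: that party proportionality \ref{parity-b} holds at the end of Phase~1 and is an invariant of Phase~2, and that Phase~2 runs for only finitely many iterations and halts precisely when type parity \ref{parity-a} holds. The only place where the supply condition is genuinely used is in showing that every swap attempted in Phase~2 can actually be carried out; everything else is bookkeeping on two counters.

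First I would record the easy facts. The type oblivious solution $\calT_\calJ$ elects the top $\calJ_i$ candidates of party $i$, so $\sum_{c\in\calC_i}\calT_\calJ(c)=\calJ_i$ for all $i$, i.e. \ref{parity-b} holds for this $\calJ$; if the type gap already equals $h\bmod 2$ the algorithm returns $\calT_\calJ$ and there is nothing more to prove. Otherwise let $a$ and $b$ be the numbers of elected candidates of the over- and under-represented types $t^{\star},t_{\star}$ in $\calT_\calJ$. Since $a+b=h$ we have $a-b\equiv h\pmod 2$, and since parity fails and $a>b$ we get $a-b=(h\bmod 2)+2m$ for some integer $m\ge 1$. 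Each Phase~2 iteration removes one elected candidate $s$ of type $t^{\star}$ and adds one candidate $\bar s$ of type $t_{\star}$ from the \emph{same} party as $s$, so all per-party totals are untouched and \ref{parity-b} is maintained; meanwhile the signed gap (number of type $t^{\star}$ minus number of type $t_{\star}$) drops by exactly $2$ at each step. Hence after $m$ iterations the gap equals $h\bmod 2$, the while-guard fails, and the returned $\calE_\calJ$ satisfies \ref{parity-a}.

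The crux is to verify that iterations $1,\dots,m$ actually execute, i.e. that $s$ and $\bar s$ exist at every step. Before the $k$-th iteration ($k\le m$) the signed gap is $(h\bmod 2)+2(m-k+1)\ge 2$, so there are at least two elected candidates of type $t^{\star}$ and $s$ exists. For $\bar s$: letting $b_k$ be the total number of elected candidates of type $t_{\star}$ just before that iteration, the gap value gives $b_k=(h-\text{gap})/2\le\lfloor h/2\rfloor-1$. In the party $i$ containing $s$, the number of its elected type-$t_{\star}$ candidates is at most $b_k\le\lfloor h/2\rfloor-1<\lceil h/2\rceil\le|\calC_i^{t_{\star}}|$, where the final inequality is exactly the supply condition; so $\{c\in\calC_i^{t_{\star}}:\calE(c)=0\}\ne\emptyset$ and $\bar s$ exists. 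This is the step I expect to require the most care: a priori $\calJ_i$ could be as large as $h$, so one might worry party $i$ has exhausted its type-$t_{\star}$ candidates; the point is that the loop stops long before the global count $b_k$ could reach $\lceil h/2\rceil$.

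Finally I would assemble the pieces: for every $\calJ\in\apportiong(\iparty)$ Algorithm~\ref{alg:chilean} terminates with a well-defined $\calE_\calJ$ satisfying \ref{parity-a} and \ref{parity-b}, hence $\calE_\calJ\in\calX(\calI,\gamma)$. Since a divisor method always returns at least one solution we have $\apportiong(\iparty)\ne\emptyset$, and by Lemma~\ref{lem:closed-supply} $\calX(\calI,\gamma)\ne\emptyset$ whenever $\calI$ satisfies the supply condition; therefore $\emptyset\ne\calM^{G}(\calI,\gamma)=\{\calE_\calJ:\calJ\in\apportiong(\iparty)\}\subseteq\calX(\calI,\gamma)$, which is precisely the assertion that $\calM^{G}$ is a valid apportionment mechanism over the instances satisfying the supply condition.
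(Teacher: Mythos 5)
Your proof is correct and follows the same route as the paper: \ref{parity-b} holds after Phase~1 and is invariant under the per-party swaps, the type gap drops by two per iteration, and the supply condition guarantees a replacement candidate exists at every step. In fact your counting argument ($b_k\le\lfloor h/2\rfloor-1<\lceil h/2\rceil\le|\calC_i^{t_\star}|$) makes precise the step that the paper's proof only asserts, namely that Phase~2 never gets stuck.
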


\begin{proof}
Let $\calI$ be an instance that satisfies the supply condition and let $\gamma$ be a signpost sequence.
By Lemma \ref{lem:closed-supply} we have that $\calX(\calI,\gamma)\ne \emptyset$ and therefore it is sufficient to show that $\calM^G(\calI,\gamma)\subseteq \calX(\calI,\gamma)$.
Upon completion of Phase 1, the party proportionality condition \ref{parity-b} is satisfied, and  this condition is preserved is preserved throughout Phase 2. During Phase 2 it holds that the unbalance 
$|\sum_{c\in \calC^{\ftype}}\calE(c)-\sum_{c\in \calC^{\mtype}}\calE(c)| $ decreases by two with every swap, and thanks to the supply condition, there are always enough candidates of each party to do the reassignments of Phase 2 without getting stuck, so the algorithm terminates when the unbalance is equal to zero or one, and at that point the type parity condition \ref{parity-a} is satisfied. \end{proof}

\begin{proof}[Proof of Theorem~\ref{thm:chilean-satisfactory}]
By Lemma \ref{lem:chilean-correct} we have that the greedy \& parity correction mechanism is a valid apportionment mechanism over the instances satisfying the supply condition.
Therefore, it remains to check that mechanism satisfies the properties \ref{axiom2}-\ref{axiom3}-\ref{axiom4}.
Recall that for every instance $\calI$ we denote by $\calP(\calI)$ the matrix with entries in $[n]\times \{\ftype,\mtype\}$ such that $\calP_{it}(\calI)=\sum_{c\in \calC_{i}^t}\votes_{\calI}(c)$.\\

\noindent{\bf Exactness.} Now suppose that we are given an instance where the function $\votes_{\calI}$ is such that $\votes_{\calI}(c)\in \{0,1\}$ for every candidate $c\in \calC$ and the function $\votes_{\calI}$ satisfies \ref{parity-a} and \ref{parity-b}.
That is, there are exactly $h$ votes and exactly $h$ candidates obtain exactly one vote each. 
Since $\votes_{\calI}$ satisfies \ref{parity-a}, we have that $\sum_{c\in \calC}\votes_{\calI}(c)=h=\sum_{i\in [n]}(\calP_{i\ftype}(\calI)+\calP_{i\mtype}(\calI))$ and
therefore, we have that 
$\calQ_i(\calI)=\calP_{i\ftype}(\calI)+\calP_{i\mtype}(\calI)$ for each party $i\in [n]$ and $\sum_{i\in [n]}\calQ_i(\calI)=h$, 
which by Lemma~\ref{lem:jefferson-one-dim}~\ref{jeff-proportional} with $\lambda=1$ implies that for every signpost sequence $\gamma$ we have $\apportiong(\iparty)=\{\calJ\}$ where $\calJ_i=\calP_{i\ftype}(\calI)+\calP_{i\mtype}(\calI)$ for each party $i\in [n]$.
Therefore, at the end of Phase 1 the candidates selected by the type oblivious solution are exactly those who obtained a vote. 
Since $\votes_{\calI}$ satisfies condition~\ref{parity-b}, parity is already satisfied and therefore \ref{alg:chilean} does not enter Phase 2.
Therefore, in this case we have $\calM^G(\calI,\gamma)=\{\votes_{I}\}$ for every signpost sequence $\gamma$ and property \ref{axiom2} is satisfied.\\

\noindent{\bf Scaling.} 
Let $\alpha$ be a non-negative real and consider the instance $\alpha\calI=(\calC,\party_{\calI},\alpha\cdot \votes_{\calI},\type_{\calI},h)$. 
By Lemma~\ref{lem:jefferson-one-dim}~\ref{jeff-scaling} we have that $\apportiong(\calQ(\alpha\calI))=\apportiong(\iparty)$ for every signpost sequence $\gamma$.
Therefore, for every $\calJ\in \apportiong(\iparty)$ we have that the type oblivious solution for $\calI$ is the same obtained for $\alpha \calI$. 
During Phase 2, since the ranking according to $\succ_{\alpha \calI}$ is the same obatined from $\succ_{\calI}$, the trajectory followed by Algorithm \ref{alg:chilean} for $\alpha\calI$ and $\calJ$ is the same as in $\calI$ and $\calJ$, resulting in the same final solution at the end of the execution. 

\paragraph{Monotonicity.} 
Let $\calI$ and $\calG$ be two instances as described in the statement of property \ref{axiom4}.
We remark that $\calG$ satisfies the supply condition since $\calI$ does. 
Suppose that $\calG$ is a voting increment of $\calI$ in candidate $c$ and let $\pparty$ and $\gtype$ be the party and type of $c$ respectively.
In particular, we have that $\calP_{\pparty \gtype}(\calG)> \calP_{\pparty \gtype}(\calI)$ and $\calP_{it}(\calI)= \calP_{it}(\calG)$ for every pair $(i,t)\ne (\pparty,\gtype)$.
Therefore, by Lemma~\ref{lem:jefferson-one-dim}~\ref{jeff-monotone} it holds that $\calJ_{\pparty}(\calG)\ge \calJ_{\pparty}(\calI)$ for every $\calJ(\calI)\in \apportiong(\calQ(\calI))$ and $\calJ(\calG)\in \apportiong(\calQ(\calG))$.
Therefore, the total number of candidates from party $\pparty$ assigned to a seat at the end of Phase 1 for $\calI$ and $\calJ(\calI)\in \apportiong(\calQ(\calI))$ is larger than the number obtained for $\calG$ and every $\calJ(\calI)\in \apportiong(\calQ(\calG))$.
In Phase 2 the swaps occur between candidates of the same party, and therefore the total number of candidates assigned to a given party remains invariant. 
Since the ranking of the candidate $c$ in the total order $(\calC,\succ_{\calI})$ is at least as good as the ranking in the total order $(\calC,\succ_{\calG})$, we have that if $\calE_{\calG}(c)=1$ at the end of Phase 2, then $\calE_{\calI}(c)=1$ as well.
Therefore property \ref{axiom4} is satisfied.
\end{proof}

\subsection{Optimality Characterization of the Greedy \& Parity Correction Mechanism}

Given $\calI$ with candidates $\calC$, let $c_1,c_2,c_2,\ldots,c_{|\calC|}$ be the candidates in $\calC$ sorted in non-increasing order according to the total order $\succ_{\calI}$ and we denote by $\rank(c)$ the ranking of the candidate $c$ in this order.
In what follows, given $\calE:\calC\to \{0,1\}$, we denote by $\calV(\calE)$ the vector such that $\calV_j(\calE)=\calE(c_j)$ for every $j\in \{1,2,\ldots,|\calC|\}$.
Given two vectors $x,y\in \{0,1\}^{|\calC|}$, the length of the {\it common prefix} between $x$ and $y$ is equal to $\ell$ if $x_j=y_j$ for every $j\in \{1,2,\ldots,\ell\}$ and $x_{\ell+1}\ne y_{\ell+1}$, and we denote this length by $\prefix(x,y)$.
For every pair $(\calI,\gamma)$ and $\calJ\in \apportiong(\iparty)$, let $\calH_{\calJ}(\calI)$ be the set of feasible solutions that maximize the length of the common prefix with $\calT_{\calJ}$, that is,
\[\calH_{\calJ}(\calI)=\text{argmax}\Big\{\prefix(\calV(\calE),\calV(\calT_{\calJ})):\calE\text{ satisfies \ref{parity-a} and \ref{parity-b}}\Big\}.\]

\begin{theorem}
	\label{thm:longest-prefix}
	For every instance $\calI$ that satisfies the supply condition and $\calJ\in \apportiong(\iparty)$, the Algorithm \ref{alg:chilean} computes a solution $\calE_{\calJ}\in \calH_{\calJ}(\calI)$ that maximizes the total number of votes, that is, 
	\[\sum_{c\in \calC}\calE_{\calJ}(c)\cdot \votes_{\calI}(c)\ge \sum_{c\in \calC}\calE(c)\cdot \votes_{\calI}(c)\quad\text{ for every }\calE\in \calH_{\calJ}(\calI).\]
\end{theorem}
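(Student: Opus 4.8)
The plan is to establish the two claims of the theorem in sequence: first that $\calE_{\calJ}\in \calH_{\calJ}(\calI)$ (maximality of the common prefix with $\calT_{\calJ}$), and then that among solutions in $\calH_{\calJ}(\calI)$ the algorithm's output has maximum vote weight. Both parts will rely on a close reading of the structure of Phase 2: every swap removes the \emph{currently-worst} elected candidate of the over-represented type $t^{\star}$ from some party $i$ and adds the \emph{best unelected} candidate of the under-represented type $t_{\star}$ from the same party $i$. I will argue that these two operations are, in a precise sense, the least-damaging way to restore parity while keeping \ref{parity-b}.

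For the prefix claim, first I would observe that any $\calE$ satisfying \ref{parity-a} and \ref{parity-b} must differ from $\calT_{\calJ}$ in at least one coordinate whenever $\calT_{\calJ}$ itself fails parity, and that any such $\calE$ must ``demote'' at least $k := \tfrac12(|\text{unbalance of }\calT_{\calJ}| - (h\bmod 2))$ elected candidates of type $t^{\star}$ and ``promote'' the same number of type-$t_{\star}$ candidates within the matching parties (here one uses that \ref{parity-b} fixes the per-party totals $\calJ_i$, so any change from $\calT_{\calJ}$ preserving the party counts must swap a top-$\calJ_i$ candidate out for a lower-ranked one in the same party). Then I would show that the first position where $\calV(\calE)$ can possibly disagree with $\calV(\calT_{\calJ})$ is no earlier than the first position where $\calV(\calE_{\calJ})$ disagrees: since $\calE_{\calJ}$ only touches the \emph{worst} elected $t^{\star}$-candidates (lowest in $\succ_{\calI}$, hence largest $\rank$) and the \emph{best} unelected $t_{\star}$-candidates, every coordinate it changes is as far to the right as possible. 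An exchange argument — take any $\calE\in\calH_{\calJ}(\calI)$, locate the first index where it changes a bit relative to $\calT_{\calJ}$, and compare with $\calE_{\calJ}$ — should show $\prefix(\calV(\calE),\calV(\calT_{\calJ}))\le \prefix(\calV(\calE_{\calJ}),\calV(\calT_{\calJ}))$, giving $\calE_{\calJ}\in\calH_{\calJ}(\calI)$.

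For the vote-maximization claim among $\calH_{\calJ}(\calI)$, I would proceed by an exchange argument on the set of ``removed'' candidates. Any $\calE\in\calH_{\calJ}(\calI)$ arises from $\calT_{\calJ}$ by, within each party $i$, removing some set $S_i\subseteq\calC_i^{t^{\star}}$ of elected candidates and adding an equal-size set $\overline S_i\subseteq\calC_i^{t_{\star}}$ of previously-unelected ones, with $\sum_i |S_i|$ fixed by the parity gap and the per-party sizes determined by feasibility; the prefix-maximality constraint pins down \emph{which} party each removal comes from (one removes from the parties whose worst elected $t^{\star}$-candidate is lowest-ranked, exactly as Phase 2 does) and forces $S_i$ to be a \emph{suffix} of party $i$'s elected $t^{\star}$-candidates in $\succ_{\calI}$-order. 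Given the removed set is thus forced, maximizing $\sum_c \calE(c)\votes_{\calI}(c)$ reduces to maximizing $\sum_i \sum_{c\in\overline S_i}\votes_{\calI}(c)$, and since each $\overline S_i$ has a prescribed size, the optimum takes the top-$|\overline S_i|$ unelected $t_{\star}$-candidates of party $i$ by $\succ_{\calI}$ — which is exactly what line 7 of Algorithm \ref{alg:chilean} selects. Summing over parties gives the inequality.

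The main obstacle is the first claim: showing that prefix-maximality genuinely forces the removed set $S=\bigcup_i S_i$ to coincide with the algorithm's choice, including the subtle point that \emph{which parties} contribute removals is determined by comparing the ranks of their worst elected $t^{\star}$-candidates, and that ties (equal ranks are impossible by the strictness of $\succ_{\calI}$ on distinct vote counts, but equal \emph{vote counts} across parties are allowed via the tie-break in $\succ_{\calI}$) are handled consistently. Once the removed set is shown to be forced by prefix-maximality, the vote-maximization part is a short greedy exchange. I would therefore spend most of the write-up carefully formalizing the claim ``$\calE\in\calH_{\calJ}(\calI)$ implies $\calE$ removes exactly the candidates Phase 2 removes,'' likely phrased as: the multiset of $\rank$-values of candidates flipped from $1$ to $0$ is lexicographically-maximal (rightmost) subject to feasibility, and Phase 2's greedy worst-to-top rule achieves precisely this.
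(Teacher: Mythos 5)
Your overall architecture matches the paper's: first establish $\calE_{\calJ}\in\calH_{\calJ}(\calI)$, then run a greedy exchange on the promoted type-$t_{\star}$ candidates to get vote-maximality. The second half of your plan is essentially the paper's argument and goes through once the first half is secured: every $\beta\in\calH_{\calJ}(\calI)$ agrees with $\calE_{\calJ}$ on all ranks $\le\ell$, all elected $t^{\star}$-candidates of both solutions lie in that prefix, so outside the prefix both elect only $t_{\star}$-candidates and the same number per party; a strict vote gap would then produce $s,\tilde s$ in one party with $\votes_{\calI}(s)>\votes_{\calI}(\tilde s)$, $\beta(s)=\calE_{\calJ}(\tilde s)=1$ and $\beta(\tilde s)=\calE_{\calJ}(s)=0$, contradicting the rule that Phase 2 always promotes the best available $t_{\star}$-candidate of the party. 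So I agree with your reduction ``once the removed set is forced, the rest is a short greedy exchange.''

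The gap is in the first half, exactly where you say you would spend most of the write-up. Your justification for prefix-maximality --- that $\calE_{\calJ}$ ``only touches the worst elected $t^{\star}$-candidates, hence every coordinate it changes is as far to the right as possible,'' after which ``an exchange argument should show'' the inequality --- is a property of $\calE_{\calJ}$, not an obstruction for competing solutions; it does not certify that \emph{no} feasible solution can agree with $\calT_{\calJ}$ beyond position $\ell+1$. (Your intermediate claim is also stated with the inequality reversed: saying the first disagreement of an arbitrary $\calE$ is ``no earlier than'' that of $\calE_{\calJ}$ would make $\calE$ at least as good.) The missing step is a counting argument, which is how the paper closes it. One first shows (Lemma~\ref{lem:chilean-cut}) that coordinate $\ell+1$ is a \emph{demotion}, i.e.\ $\calT_{\calJ}(c_{\ell+1})=1$ and $\calE_{\calJ}(c_{\ell+1})=0$, so $c_{\ell+1}$ has the over-represented type $t^{\star}$; one then shows (Lemma~\ref{lem:chilean-over-contained}) that, since Phase 2 removes $t^{\star}$-candidates from globally worst to best, \emph{every} $t^{\star}$-candidate elected by $\calE_{\calJ}$ has rank at most $\ell$. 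Because the number of $t^{\star}$-candidates elected by $\calE_{\calJ}$ already equals the parity cap for $t^{\star}$, any feasible $\tilde\calE$ agreeing with $\calT_{\calJ}$ through position $\ell+1$ would elect all of those candidates \emph{plus} $c_{\ell+1}$, exceeding the cap and violating \ref{parity-a}. This same counting is also what makes your ``the removed set is forced'' claim true for every member of $\calH_{\calJ}(\calI)$, and it is cleaner and safer than the lexicographically-rightmost characterization you propose, which reasons only about demotions and so does not by itself rule out a competitor whose first flip relative to $\calT_{\calJ}$ is a promotion.
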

That is, among the set of solutions having the longest common prefix with the type oblivious solution, Algorithm \ref{alg:chilean} selects one that maximizes the total number of votes obtained by the selected candidates.
We state two structural lemmas before proving this theorem.

\begin{lemma}
	\label{lem:chilean-cut}
	Let $\calI$ be an instance that satisfies the supply condition. 
	Consider $\calJ\in \apportiong(\calQ(\calI))$ for some signpost sequence $\gamma$ and let $\calE_{\calJ}$ be the solution computed by Algorithm \ref{alg:chilean}.
	Suppose that the type oblivious solution $\calT_{\calJ}$ does not satisfy the type parity condition \ref{parity-a} and consider $\ell=\prefix(\calV(\calE_{\calJ}),\calV(\calT_{\calJ}))<|\calC|$.
	Then, we have that $\calE_{\calJ}(c_{\ell+1})=0$ and $\calT_{\calJ}(c_{\ell+1})=1$.
\end{lemma}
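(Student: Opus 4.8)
The plan is to argue by contradiction at the single position $\ell+1$ where the two solutions first disagree, using only the party-proportionality guarantee for $\calE_{\calJ}$ (established in Lemma~\ref{lem:chilean-correct}) together with the explicit structure of the type-oblivious solution. Since $\ell<|\calC|$, the definition of $\prefix$ gives $\calE_{\calJ}(c_{\ell+1})\ne\calT_{\calJ}(c_{\ell+1})$, and since both values lie in $\{0,1\}$ we are in one of the two cases $(\calE_{\calJ}(c_{\ell+1}),\calT_{\calJ}(c_{\ell+1}))=(0,1)$ or $(\calE_{\calJ}(c_{\ell+1}),\calT_{\calJ}(c_{\ell+1}))=(1,0)$. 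The whole point is to rule out the case $(1,0)$.

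Suppose for contradiction that $\calE_{\calJ}(c_{\ell+1})=1$ and $\calT_{\calJ}(c_{\ell+1})=0$, and set $i=\party_{\calI}(c_{\ell+1})$. By definition $\calT_{\calJ}$ assigns the value $1$ precisely to the top $\calJ_i$ candidates of $\calC_i$; since $\calT_{\calJ}(c_{\ell+1})=0$, the candidate $c_{\ell+1}$ is not among them, so each of those $\calJ_i$ top candidates of $\calC_i$ (possibly none, if $\calJ_i=0$) is ranked strictly above $c_{\ell+1}$ in $\succ_{\calI}$ and hence occupies a global index in $\{1,\dots,\ell\}$. Because $\calV(\calE_{\calJ})$ and $\calV(\calT_{\calJ})$ agree on positions $1,\dots,\ell$ and $\calT_{\calJ}$ elects each of these candidates, $\calE_{\calJ}$ elects all $\calJ_i$ of them too; together with $c_{\ell+1}$ this yields $\sum_{c\in\calC_i}\calE_{\calJ}(c)\ge\calJ_i+1$. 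This contradicts property~\ref{parity-b} for $\calE_{\calJ}$, namely $\sum_{c\in\calC_i}\calE_{\calJ}(c)=\calJ_i$, which holds since by Lemma~\ref{lem:chilean-correct} the solution $\calE_{\calJ}$ satisfies \ref{parity-a}--\ref{parity-b}. Hence the remaining case $(\calE_{\calJ}(c_{\ell+1}),\calT_{\calJ}(c_{\ell+1}))=(0,1)$ must hold, which is exactly the claim.

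Two remarks on the structure of this argument. First, the hypothesis that $\calT_{\calJ}$ violates \ref{parity-a} is used only to ensure that the algorithm actually enters Phase~2 (otherwise $\calE_{\calJ}=\calT_{\calJ}$ and $\ell=|\calC|$, contradicting $\ell<|\calC|$); no finer property of how Phase~2 chooses swaps is needed, because the contradiction rests entirely on the fact that the first deviation from $\calT_{\calJ}$ cannot \emph{add} a candidate without overshooting that party's quota. Second, the only place that needs a little care is the translation between a candidate's rank \emph{within its party} $\calC_i$ and its \emph{global} position in $\succ_{\calI}$: one must note that any candidate of $\calC_i$ ranked above $c_{\ell+1}$ necessarily has global index at most $\ell$, so that prefix agreement on $\{1,\dots,\ell\}$ applies to it. I do not expect any substantive obstacle beyond this observation.
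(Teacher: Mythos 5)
Your proof is correct and follows essentially the same strategy as the paper's: a contradiction at the first disagreement position, showing that $\calE_{\calJ}$ cannot \emph{add} a candidate there without exceeding party $i$'s quota $\calJ_i$. The only (minor) difference is that you derive the contradiction purely from the party-proportionality property \ref{parity-b} of the output, whereas the paper reasons explicitly about the Phase~2 swap that would have introduced $c_{\ell+1}$; your version is slightly more self-contained but the underlying counting argument is the same.
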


\begin{proof}
By Lemma~\ref{lem:chilean-correct} we have that $\calE_{\calJ}$ satisfies the type parity and the party proportionality conditions \ref{parity-a}-\ref{parity-b}.
Consider the candidate $c_{\ell+1}$ and suppose that $\calT_{\calJ}(c_{\ell+1})=0$.
Then, we have that $\calE_{\calJ}(c_{\ell+1})=1$ and therefore the candidate $c_{\ell+1}$ was included in the solution during the second phase of Algorithm \ref{alg:chilean}, replacing other other candidate $s$ in the same party of $c_{\ell+1}$, from the other type, and such that $\calT_{\calJ}(s)=1$ and $\calE_{\calJ}(s)=0$. 
Since $\ell=\prefix(\calV(\calE_{\calJ}),\calV(\calT_{\calJ}))$ we have that the ranking of $s$ in the order $\succ_{\calI}$ must be larger than $\ell$, but this is a contradiction: Since the type oblivious selected $s$ but it did not select $c_{\ell+1}$ we have that the ranking of $s$ is less than the ranking of $c_{\ell+1}$.
Then, we conclude that $\calT_{\calJ}(c_{\ell+1})=1$ and $\calE_{\calJ}(c_{\ell+1})=0$.
\end{proof}
\begin{lemma}
\label{lem:chilean-over-contained}
Let $\calI$ be an instance that satisfies the supply condition and consider $\calJ\in \apportiong(\calQ(\calI))$ for some signpost sequence $\gamma$.
Suppose that the type oblivious solution $\calT_{\calJ}$ does not satisfy the type parity condition \ref{parity-a} and let $t^{\star}$ be the over represented type in this solution.
Take $\ell=\prefix(\calV(\calE_{\calJ}),\calV(\calT_{\calJ}))<|\calC|$.
Then, for every $c\in \calC$ of type $t^{\star}$ such that $\calE_{\calJ}(c)=1$, we have $\rank(c)\le \ell$.
\end{lemma}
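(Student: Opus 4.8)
The plan is to argue by contradiction, exploiting the fact that in Phase 2 of Algorithm~\ref{alg:chilean} the candidate $s$ that is removed always has type $t^\star$, while the candidate $\overline s$ that is added always has type $t_\star \ne t^\star$. Two consequences of this observation will do most of the work. First, if $c\in\calC$ has type $t^\star$ and $\calE_{\calJ}(c)=1$, then already $\calT_{\calJ}(c)=1$: a type-$t^\star$ candidate is never added during Phase 2, so it can only appear in the final solution if it was selected at the end of Phase 1. Second, such a candidate $c$ is never chosen as the removed candidate $s$ in any iteration of the while loop; indeed, if it were, then $\calE(c)$ would be set to $0$, and since every subsequently added candidate has type $t_\star$ it would never return, contradicting $\calE_{\calJ}(c)=1$. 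In particular $\calE(c)=1$ throughout the whole execution of Phase 2.

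Next I would pin down the position $\ell+1$, which exists since $\ell<|\calC|$. Lemma~\ref{lem:chilean-cut} applies (the type oblivious solution does not satisfy \ref{parity-a}) and gives $\calT_{\calJ}(c_{\ell+1})=1$ and $\calE_{\calJ}(c_{\ell+1})=0$. Since Phase 2 is initialized with $\calE\leftarrow\calT_{\calJ}$, the value of $\calE$ at $c_{\ell+1}$ starts at $1$ and ends at $0$, so there is an iteration of the while loop in which $c_{\ell+1}$ plays the role of $s$; as the removed candidate is always drawn from $\calC^{t^\star}$, this shows that $c_{\ell+1}$ has type $t^\star$.

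Now assume for contradiction that some $c\in\calC$ of type $t^\star$ satisfies $\calE_{\calJ}(c)=1$ and $\rank(c)>\ell$. From $\calE_{\calJ}(c)=1\ne 0=\calE_{\calJ}(c_{\ell+1})$ we get $c\ne c_{\ell+1}$, and since $\rank(c_{\ell+1})=\ell+1$ this forces $\rank(c)>\ell+1$, i.e. $c$ is ranked strictly worse than $c_{\ell+1}$ in $\succ_{\calI}$. Consider the iteration in which $s=c_{\ell+1}$ is removed. By the two consequences above, $c$ has type $t^\star$ and $\calE(c)=1$ at that moment, so $c$ belongs to the set $\{c'\in\calC^{t^\star}:\calE(c')=1\}$ out of which the algorithm picks the \emph{worst-ranked} candidate. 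But $c$ is ranked worse than $c_{\ell+1}$, contradicting the choice $s=c_{\ell+1}$. Hence no such $c$ exists, which is exactly the statement of the lemma.

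The only delicate point is the monotone behaviour of Phase 2 used above, namely that once a type-$t^\star$ candidate leaves the solution it never comes back, because each added candidate has the opposite type $t_\star$; this is what licenses the claim ``$\calE(c)=1$ at the moment $c_{\ell+1}$ is removed''. With that in hand the argument is just a rank comparison anchored at $c_{\ell+1}$ via Lemma~\ref{lem:chilean-cut}, with no case analysis on $\gamma$ or on the number of Phase-2 swaps.
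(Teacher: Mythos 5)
Your proof is correct and follows essentially the same route as the paper's: both anchor the argument at $c_{\ell+1}$ via Lemma~\ref{lem:chilean-cut}, use the fact that Phase~2 never adds a type-$t^{\star}$ candidate and always removes the worst-ranked currently elected one, and conclude that no elected $t^{\star}$ candidate can sit below rank $\ell$. If anything, your explicit contradiction at the iteration where $c_{\ell+1}$ is removed spells out the monotonicity of Phase~2 more carefully than the paper's terser ``last candidate swapped'' phrasing.
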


\begin{proof}
By Lemma \ref{lem:chilean-cut} we have that $\calE_{\calJ}(c_{\ell+1})=0$ and $\calT_{\calJ}(c_{\ell+1})=1$.
Since $c_{\ell+1}$ is allocated in the type oblivious solution but it is not allocated in the solution $\calE_{\calJ}$, it means that the candidate $c_{\ell+1}$ belongs to the over represented type $t^{\star}$ in the type oblivious solution.
Since the length of the common prefix between $\calV(\calE_{\calJ})$ and $\calV(\calT_{\calJ})$ is equal to $\ell$, we have that $c_{\ell+1}$ was the last candidate of type $t^{\star}$ that was swapped in the second phase of Algorithm~\ref{alg:chilean} in order to satisfy the type parity condition \ref{parity-a}. 
In particular, we have that $\calE_{\calJ}(c)=0$ for every candidate $c\in \calC$ of type $t^{\star}$ with $\rank(c)>\ell$.
\end{proof}

\begin{proof}[Proof of Theorem~\ref{thm:longest-prefix}]
Let $\calE_{\calJ}$ be the solution computed by Algorithm \ref{alg:chilean} in instance $\calI$ and $\calJ\in \apportiong(\iparty)$.
Consider $\ell=\prefix(\calV(\calE_{\calJ}),\calV(\calT_{\calJ}))$ and suppose there is other solution $\tilde \calE$ satisfying \ref{parity-a}-\ref{parity-b} and such that $\prefix(\calV(\tilde \calE),\calV(\calT_{\calJ}))\ge \ell+1$.
In particular, it means that there is an over represented type in the type oblivious solution $\calT_{\calJ}$ and we denote it by $t^{\star}$.
By Lemma \ref{lem:chilean-over-contained} we have that $\rank(c)\le \ell$ for every $c\in \calC$ of type $t^{\star}$ such that $\calE_{\calJ}(c)=1$ and we denote this set of candidates by $\calA$.
Since the length of the common prefix between $\calV(\tilde \calE)$ and $\calV(\calT_{\calJ}))$ is at least $\ell+1$, we have $\prefix(\calV(\tilde \calE),\calV(\calE_{\calJ}))=\ell$ and therefore $\calA\subseteq \{c\in \calC^{t^{\star}}:\tilde \calE(c)=1\}$.
This implies that $\sum_{c\in \calC^{t^{\star}}}\tilde\calE(c)\ge |\calA|+\tilde \calE(c_{\ell+1})=|\calA|+1$,
but this contradicts the fact that $\tilde \calE$ is a feasible solution that satisfies the type parity condition \ref{parity-a}.
We conclude that $\calE_{\calJ}\in \calH_{\calJ}(\calI)$.
Now suppose that there exists other solution $\beta\in \calH_{\calJ}(\calI)$ such that
\begin{equation}
	\label{ineq:under}
	\sum_{c\in \calC}\calE_{\calJ}(c)\cdot \votes_{\calI}(c)<\sum_{c\in \calC}\beta(c)\cdot \votes_{\calI}(c).
\end{equation}
Let $\calB$ be the subset of candidates such that $\rank(c)\le \ell$.
In particular, we have that $\calE_{\calJ}(c)=\beta(c)$ for every $c\in \calB$ and therefore we have that $\sum_{c\in \calC\setminus \calB}\calE_{\calJ}(c)\cdot \votes_{\calI}(c)<\sum_{c\in \calC\setminus \calB}\beta(c)\cdot \votes_{\calI}(c)$.
By Lemma \ref{lem:chilean-over-contained} we have that $\{c\in \calC^{t^\star}:\calE_{\calJ}(c)=1\}\subseteq \calB$ and since $\calE_{\calJ}(c)=\beta(c)$ for every $c\in \calB$ we conclude that $\{c\in \calC^{t^\star}:\beta(c)=1\}\subseteq \calB$.
This inclusion, together with the strict inequality (\ref{ineq:under}), implies the existence of two candidates $s,\tilde s\in \calC_i$ for some $i\in [n]$ and from the under represented type $t_{\star}$ such that $\calE_{\calJ}(s)=\beta(\tilde s)=0$, $\calE(\tilde s)=\beta(s)=1$ and $\votes_{I}(\tilde s)<\votes_{I}(s)$.
But this contradicts the swapping rule in Phase 2 of Algorithm \ref{alg:chilean}: the candidate $s$ should be included before including $\tilde s$, that is, if $\calE_{\calJ}(\tilde s)=1$ then we necessarily have $\calE_{\calJ}(s)=1$.
This concludes the proof.  
\end{proof}

\section{The Biproportional Parity Mechanism}
\label{sec:biprop-algorithm}

In the following we describe a biproportional mechanism based on the biproportional method introduced by Balinski and Demange \cite{balinski1989mathprog,BalinskiDemange1989}.
We say that $(\calP,\calS,\calJ,\phi)$ is a {\it two-dimensional instance with supply} if both $\calP$ and $\calS$ are integral matrices with entries in $[n]\times \{\ftype,\mtype\}$, $\calJ$ is an $n$ dimensional non-negative integral vector and $\phi$ is a non-negative integral vector with entries in $\{\ftype,\mtype\}$ such that $\sum_{i\in [n]}\calJ_i=\phi_{\ftype}+\phi_{\mtype}$.
The vectors $\calJ$ and $\phi$ are called {\it row and column marginals}, respectively. 
\begin{definition}
	\label{def:biproportional}
	Let $(\calP,\calS,\calJ,\phi)$ be a two-dimensional instance with supply and let $\delta$ be a signpost sequence.
	Given a matrix $x$ with integer entries in $[n]\times \{\ftype,\mtype\}$, a vector $\lambda\in \RR_+^n$ and $\mu=(\mu_{\ftype},\mu_{\mtype})\in \RR_+^{2}$, we say that the triplet $(x,\lambda,\mu)$ is a biproportional solution for $(\calP,\calS,\calJ,\phi)$ with signpost sequence $\delta$ if for each $i\in [n]$ and each $t\in \{\ftype,\mtype\}$ the following holds:
	\begin{align}
		x_{it}&\in \calR_{\delta}(\calP_{it}\lambda_i\mu_t)\text{ when }x_{it}<\calS_{it},\label{biprop1}\\
		x_{i\ftype}+x_{i\mtype}&=\calJ_i,\label{biprop2}\\
		\sum_{i\in [n]}x_{it}&=\phi_t.\label{biprop3}\\
		0\le x_{it}&\le \calS_{it}. \label{biprop4}
	\end{align}
\end{definition}
We denote by $\calB_{\delta}(\calP,\calS,\calJ,\phi)$ the set of integral matrices $x$ such that there exist $\lambda$ and $\mu$ for which $(x,\lambda,\mu)$ is a $\delta$-biproportional solution for $(\calP,\calS,\calJ,\phi)$.
In particular, we refer to (\ref{biprop1}) as the biproportionality condition.

\subsection{A Network Flow Approach}

Given a signpost sequence $\delta$ such that $\delta(1)>0$, we follow the approach based on network flows to compute a biproportional solution $x\in \calB_{\delta}(\calP,\calS,\calJ,\phi)$, based on the idea by Rote and Zachariasen \cite{RoteZachariazen2007} and more recently by Gaffke and Pukelsheim \cite{gaffke2008vector}. 
In our case we consider a capacitated version of the biproportional method, as a result of the upper bound on the value of each entry of $x$ given by the supply matrix.

Consider a graph with $n+4$ vertices given by a source $v$, one vertex $u_i$ for each $i\in [n]$, one vertex $v_t$ for each type $t\in \{\ftype,\mtype\}$ and one sink $\overline v$. 
There is an edge from the source $v$ to every $u_i$ with $i\in [n]$ and with a capacity lower bound equal to $\calJ_i$. 
For every $i\in [n]$ and each type $t\in \{\ftype,\mtype\}$ the graph has $\calS_{it}$ parallel edges $e_{it1},e_{it2},\ldots,e_{it|\calS_{it}|}$ between $u_i$ and $v_t$. 
Those edges have a capacity upper bound equal to one and the the edge $e_{it\ell}$ has a cost of $\log(\delta(\ell)/\calP_{it})$ for each $\ell \in [\calS_{it}]$. 
Finally, there is an edge from both $v_{\ftype}$ and $v_{\mtype}$ to $\overline v$ with a capacity lower bound of $\phi_{\ftype}$ and $\phi_{\mtype}$ respectively.
The associated minimum cost flow problem is the following:
\begin{align}
\text{minimize} \quad\sum_{i=1}^n\sum_{\ell=1}^{|\calS_{i\ftype}|} w_{i\ftype\ell}&\log\left(\delta(\ell)/\calP_{i\ftype}\right)+\sum_{i=1}^n\sum_{\ell=1}^{|\calS_{i\mtype}|} w_{i\mtype\ell}\log\left(\delta(\ell)/\calP_{i\mtype}\right),\label{flow-obj}\\
\text{subject to}   \quad \sum_{\ell=1}^{|\calS_{i\ftype}|} w_{it\ell }&=z_{it} \quad\;\; \;\hspace{1pt}\text{ for every }i\in [n] \text{ and each }t\in \{\ftype,\mtype\},\label{flow1}\\
			z_{i\ftype}+z_{i\mtype}&=\calJ_i\quad\quad \text{ for every }i\in [n],\label{flow2}\\
\sum_{i\in [n]}z_{it}&=\phi_t\quad\quad\hspace{-2pt}\;\text{ for each }\;t\in \{\ftype,\mtype\}\label{flow3},\\
			\quad\quad 0\le \; w_{it\ell}&\le 1\quad\quad\;\;\;\hspace{-2pt}\text{ for every }i\in [n], t\in \{\ftype,\mtype\} \text{ and }\ell\in [\calS_{it}]\label{flow4}.	
\end{align}
The variable $z_{it}$ represents the total number of seats that are allocated in the solution for party $i\in [n]$ and type $t\in \{\ftype,\mtype\}$.
One could equivalently write the program above as a convex piecewise linear flow problem by not including the set of variables $w_{it\ell}$.
Constraint (\ref{flow2}) indicates that the allocation should respect the row marginals and constraint (\ref{flow3}) enforces the solution to satisfy the type marginals.
In the following let $(x,w)$ be an optimal solution of (\ref{flow-obj})-(\ref{flow4}) and let $\Lambda\in \RR^{I}$ and $\calU\in \RR^{J}$ be the dual solutions associated to the constraints (\ref{flow2}) and (\ref{flow3}) respectively.
We refer to the tuple $(x,w,\Lambda,\calU,\beta)$ as the optimal primal dual pair, which satisfies the following conditions,
\begin{align}
\Lambda_i+\calU_t+\beta_{it\ell}&\le \log(\delta(\ell)/\calP_{it}), \label{eq:dual1}\\
w_{it\ell}\left(\Lambda_i+\calU_t+\beta_{it\ell}-\log(\delta(\ell)/\calP_{it})\right)&=0,\label{eq:dual2}\\
\beta_{it\ell}(w_{it\ell}-1)&=0,\label{eq:dual3}\\
\beta_{it\ell}&\le 0,\label{eq:dual4}
\end{align}
for every party $i\in [n]$, each type $t\in \{\mtype,\ftype\}$ and every $\ell\in [\calS_{it}]$, where $\beta_{it\ell}$ is the dual variable associated to the upper bound in constraint (\ref{flow4}) on the value of $w_{it\ell}$.
The following result summarizes the main properties of this network flow problem.
We recall that by the network flow theory we know that every optimal extreme point $x$ of the problem (\ref{flow-obj})-(\ref{flow3}) is such that $x_{it}\in \ZZ$ for each party $i\in [n]$ and type $t\in \{\ftype,\mtype\}$.

\begin{lemma}
\label{lem:biprop-duality}
Let $(\calP,\calS,\calJ,\phi)$ be a two-dimensional instance with supply and let $x$ be a matrix with integer entries and dimensions $[n]\times \{\ftype,\mtype\}$.
Consider $\lambda\in \RR_+^n$ and $\mu=(\mu_{\ftype},\mu_{\mtype})\in \RR_+^{2}$.
Take $\Lambda_i=\log(\lambda_i)$ for every party $i\in [n]$ and consider $\calU_{\ftype}=\log(\mu_{\ftype})$ and $\calU_{\mtype}=\log(\mu_{\mtype})$.
Then, $(x,\lambda,\mu)$ is a biproportional solution for the two-dimensional instance $(\calP,\calS,\calJ,\phi)$ if and only if there exists an integer vector $w$ and a non-positive vector $\beta$ such that $(x,w,\Lambda,\calU,\beta)$ is an optimal primal dual pair of (\ref{flow-obj})-(\ref{flow3}).
\end{lemma}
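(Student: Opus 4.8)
The plan is to read the statement as the complementary-slackness characterization of optimality for the linear program (\ref{flow-obj})--(\ref{flow4}): a tuple $(x,w,\Lambda,\calU,\beta)$ is an optimal primal--dual pair precisely when $(x,w)$ is primal feasible (here $x$ is the allocation, i.e. $x_{it}=\sum_{\ell} w_{it\ell}$), $(\Lambda,\calU,\beta)$ is dual feasible, which is exactly (\ref{eq:dual1}) and (\ref{eq:dual4}), and the two complementary-slackness identities (\ref{eq:dual2})--(\ref{eq:dual3}) hold. So the proof reduces to translating each of the biproportionality conditions (\ref{biprop1})--(\ref{biprop4}) into these primal--dual conditions and back, under the correspondence $\lambda_i=e^{\Lambda_i}$, $\mu_t=e^{\calU_t}$, which rewrites ``edge cost minus node prices'', $\log(\delta(\ell)/\calP_{it})-\Lambda_i-\calU_t$, as $\log\big(\delta(\ell)/(\calP_{it}\lambda_i\mu_t)\big)$, whose sign is governed exactly by how $\delta(\ell)$ compares to $\calP_{it}\lambda_i\mu_t$.

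The key structural fact, which I would establish first and reuse in both directions, is that since every signpost sequence is strictly increasing on the positive integers, the parallel edges $e_{it1},e_{it2},\ldots$ between $u_i$ and $v_t$ carry strictly increasing costs $\log(\delta(\ell)/\calP_{it})$ in $\ell$. Consequently, whenever an integral $w$ together with $(\Lambda,\calU,\beta)$ satisfies (\ref{eq:dual1})--(\ref{eq:dual4}): for a \emph{used} edge $\ell^\star$ (where $w_{it\ell^\star}=1$) conditions (\ref{eq:dual2}) and (\ref{eq:dual4}) give $\Lambda_i+\calU_t\ge\log(\delta(\ell^\star)/\calP_{it})$, while for an \emph{unused} edge $\ell'$ (where $w_{it\ell'}=0$) condition (\ref{eq:dual3}) forces $\beta_{it\ell'}=0$ and then (\ref{eq:dual1}) gives $\Lambda_i+\calU_t\le\log(\delta(\ell')/\calP_{it})$; combining these yields $\delta(\ell^\star)\le\delta(\ell')$, hence $\ell^\star<\ell'$. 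Thus the used edges between $u_i$ and $v_t$ always form the prefix $\{1,\ldots,x_{it}\}$, and moreover $\Lambda_i+\calU_t$ is pinned between $\log(\delta(x_{it})/\calP_{it})$ and $\log(\delta(x_{it}+1)/\calP_{it})$ whenever both edges exist.

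For the ``if'' direction, assume $(x,w,\Lambda,\calU,\beta)$ is an optimal primal--dual pair. Then (\ref{biprop2})--(\ref{biprop4}) are read off directly from the primal constraints (\ref{flow2}), (\ref{flow3}) and the box constraints (\ref{flow4}), so only (\ref{biprop1}) remains. Fixing $(i,t)$ with $x_{it}<\calS_{it}$, the prefix structure says edge $x_{it}$ is used (when $x_{it}\ge 1$) and edge $x_{it}+1$ exists and is unused, so the inequalities above give $\delta(x_{it})\le\calP_{it}\lambda_i\mu_t\le\delta(x_{it}+1)$ (the lower bound being vacuous when $x_{it}=0$, in which case edge $1$ unused gives $\calP_{it}\lambda_i\mu_t\le\delta(1)$); by the definition of $\calR_\delta$ this is exactly $x_{it}\in\calR_\delta(\calP_{it}\lambda_i\mu_t)$. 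Conversely, given a biproportional $(x,\lambda,\mu)$, set $\Lambda=\log\lambda$, $\calU=\log\mu$, take $w_{it\ell}=1$ for $\ell\le x_{it}$ and $0$ otherwise (primal feasible by (\ref{biprop2})--(\ref{biprop4})), and set $\beta_{it\ell}=\log(\delta(\ell)/\calP_{it})-\Lambda_i-\calU_t$ on used edges, $\beta_{it\ell}=0$ elsewhere, so that (\ref{eq:dual2})--(\ref{eq:dual3}) hold by construction; (\ref{eq:dual1}) on an unused edge $\ell>x_{it}$ (which forces $x_{it}<\calS_{it}$) follows from $\calP_{it}\lambda_i\mu_t\le\delta(x_{it}+1)\le\delta(\ell)$ via (\ref{biprop1}) and monotonicity, and (\ref{eq:dual4}) on a used edge $\ell\le x_{it}$ follows from $\delta(\ell)\le\delta(x_{it})\le\calP_{it}\lambda_i\mu_t$, again by (\ref{biprop1}); optimality then holds because primal feasibility, dual feasibility, and complementary slackness together certify it.

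I expect the delicate point to be the entries at capacity, $x_{it}=\calS_{it}$: there condition (\ref{biprop1}) is vacuous, yet the prefix of used edges is all of $\{1,\ldots,\calS_{it}\}$, so dual feasibility of $\beta$ on the top edge still demands $\delta(\calS_{it})\le\calP_{it}\lambda_i\mu_t$; the argument must therefore rely on this inequality being part of (equivalently, implied by) what it means for $(x,\lambda,\mu)$ to be a biproportional solution at a saturated cell --- that is, the quantifier ``when $x_{it}<\calS_{it}$'' in Definition~\ref{def:biproportional} is complemented by the saturation requirement $\calP_{it}\lambda_i\mu_t\ge\delta(\calS_{it})$, just as the empty cell $x_{it}=0<\calS_{it}$ already forces $\calP_{it}\lambda_i\mu_t\le\delta(1)$ through (\ref{biprop1}). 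Apart from this and the degenerate handling of cells with $\calP_{it}=0$ or $\calS_{it}=0$, the proof is a routine unwinding of linear programming duality.
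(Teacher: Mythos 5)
Your proposal is correct and follows essentially the same route as the paper's proof: unwinding LP complementary slackness for the parallel-edge flow formulation, establishing the prefix structure of the used edges from the strictly increasing costs, and reading off condition (\ref{biprop1}) from the pinching $\delta(x_{it})\le \calP_{it}\lambda_i\mu_t\le\delta(x_{it}+1)$, with the same explicit construction of $w$ and $\beta$ in the converse direction. The "delicate point" you flag at saturated cells $x_{it}=\calS_{it}$ is a genuine one: dual feasibility of $\beta$ on the top used edge requires $\delta(\calS_{it})\le\calP_{it}\lambda_i\mu_t$, which Definition~\ref{def:biproportional} does not literally impose there, and the paper's own proof of the converse direction silently uses the inequality $\delta(x_{it})\le\calP_{it}\lambda_i\mu_t$ without restricting to $x_{it}<\calS_{it}$ — so your reading, that the saturation inequality is implicitly part of what a biproportional solution means at a capacity-tight cell, is the interpretation needed for the equivalence to hold as stated.
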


\begin{proof}
Suppose that $(x,w,\Lambda,\calU,\beta)$ is an optimal primal dual pair of (\ref{flow-obj})-(\ref{flow3}) where $x$ is an extreme point.
We start by observing that for each $i\in [n]$ and each type $t\in \{\ftype,\mtype\}$, we have that $w_{it\ell}=1$ for $\ell\in \{1,\ldots,x_{it}\}$ and $w_{itk}=0$ for $k\in \{x_{it}+1,\ldots,\calS_{it}\}$.
This comes directly by the fact that for each $i\in [n]$ and each type $t\in \{\ftype,\mtype\}$ the function $\log(\delta(\ell)/\calP_{it})$ is strictly increasing as a function of $\ell$ and since $x_{it}\in \ZZ$.
Consider $i\in [n]$ and $t\in \{\ftype,\mtype\}$ such that $x_{it}<\calS_{it}$.
By condition (\ref{eq:dual2}) when $\ell=x_{it}$ we have that $\Lambda_i+\calU_t+\beta_{it\ell}-\log(\delta(x_{it})/\calP_{it})=0$, that is $\delta(x_{it})=\calP_{it}e^{\Lambda_i}e^{\calU_t}e^{\beta_{it\ell}}=\calP_{it}\lambda_i\mu_te^{\beta_{it\ell}}\le \calP_{it}\lambda_i\mu_t$,
where the last inequality comes from the fact that $\beta_{it\ell}\le 0$ by condition (\ref{eq:dual4}).
On the other hand, when $\ell=x_{it}+1$ we have that $x_{it\ell}=0$ and therefore the complementary slackness condition (\ref{eq:dual3}) implies that $\beta_{it\ell}=0$.
Then, condition (\ref{eq:dual1}) implies that $\Lambda_i+\calU_t\le \log(\delta(x_{it}+1)/\calP_{it})$, that is $\delta(x_{it}+1)\ge  \calP_{it}\lambda_i\mu_t$, and therefore $x_{it}\in \calR_{\delta}(\calP_{it}\lambda_i\mu_t)$.
We conclude that $(x,\lambda,\mu)$ is a biproportional solution with signpost sequence $\delta$.

Conversely, suppose that the triplet $(x,\lambda,\mu)$ is a biproportional solution with divisor $\delta$ for the two-dimensional instance $(\calP,\calS,\calJ,\phi)$. 
For each party $i\in [n]$ and each type $t\in \{\ftype,\mtype\}$, consider $w_{it\ell}=1$ for $\ell\le x_{it}$ and $w_{it\ell}=0$ is zero otherwise.
We remark this is possible since $x_{it}\le \calS_{it}$.
Furthermore, for each party $i\in [n]$ and each type $t\in \{\ftype,\mtype\}$, let $\beta_{it\ell}=\log(\delta(\ell)/\calP_{it})-\Lambda_i-\calU_t$ for $\ell\le x_{it}$ and $\beta_{it\ell}=0$ otherwise.
By construction, the tuple $(x,w,\Lambda,\calU,\beta)$ defined in this way satisfies the complementary slackness constraints (\ref{eq:dual2})-(\ref{eq:dual3}). 
Since $(x,\lambda,\mu)$ is a biproportional solution, we have that $\delta(x_{it})\le \calP_{it}\lambda_i\mu_t= \calP_{it}e^{\Lambda_i}e^{\calU_t}$, which implies that $\log(\delta(x_{it})/\calP_{it})-\Lambda_i-\calU_t\le 0$.
Therefore, by the monotonicity of the signpost sequence $\delta$ we have that 
\[\beta_{it\ell}= \log(\delta(\ell)/\calP_{it})-\Lambda_i-\calU_t\le \log(\delta(x_{it})/\calP_{it})-\Lambda_i-\calU_t\le 0\] 
for each $\ell\le x_{it}$, and in consequence constraint (\ref{eq:dual4}) is satisfied since $\beta_{it\ell}=0$ for $\ell>x_{it}$.
By construction of $\beta$ we have that constraint (\ref{eq:dual1}) is satisfied with equality when $\ell\le x_{it}$.
By the biproportionality of $(x,\lambda,\mu)$ we also have that $\delta(x_{it}+1)\ge \calP_{it}\lambda_i\mu_t= \calP_{it}e^{\Lambda_i}e^{\calU_t}$, and therefore $\log(\delta(x_{it}+1)/\calP_{it})\ge \Lambda_i+\calU_t$.
Thus, by the monotonicity of the signpost sequence $\delta$ we have that 
\[\log(\delta(\ell)/\calP_{it})\ge \log(\delta(x_{it}+1)/\calP_{it})\ge \Lambda_i+\calU_t=\Lambda_i+\calU_t+\beta_{it\ell}\] 
for each $\ell>x_{it}$, since $\beta_{it\ell}=0$ is zero in this case. 
By strong duality, we conclude that $(x,w,\Lambda,\calU,\beta)$ is an optimal primal dual pair.
\end{proof}

\subsection{Description of the Biproportional Parity Mechanism}

Given an instance $\calI$, we define $\calP(\calI)$ and $\calS(\calI)$ with entries in $[n]\times \{\ftype,\mtype\}$ such that $\calP_{it}(\calI)=\sum_{c\in \calC_{i}^t}\votes_{\calI}(c)$ and $\calS_{it}(\calI)=|\calC_{it}|$. 
We say that $t\in \{\ftype,\mtype\}$ is {\it vote leading} when the total number of votes garnered by candidates of type $t$ is strictly larger than the number of votes garnered by the other type.
In case of equality, the vote leading type is decided according to some tie breaking rule.
We assume that this tie breaking rule is invariant under scaling: If $t$ is vote leading in an instance $\calI$, then $t$ is vote leading in $\alpha \calI$ for every positive real $\alpha$.
The {\it parity marginal} for $\calI$, denoted by $\phi(\calI)$, is a vector with entries in $\{\ftype,\mtype\}$ such that $\phi_{\ftype}(\calI)+\phi_{\mtype}(\calI)=h$ and the following holds: When $h$ is even we have $\phi_{\ftype}(\calI)=\phi_{\mtype}(\calI)=h/2$, and when $h$ is odd we have $\phi_{t}(\calI)=\lceil h/2\rceil$ for the vote leading type $t\in \{\ftype,\mtype\}$.

We now describe the mechanism based on biproportionality.
We remark that in our approach the marginals of the biproportional problem are computed from the input.
In particular, the party marginals depend strongly on the votes, since they are computed by using an apportionment method at the level of the parties. 

\begin{algorithm}[H]
	\begin{algorithmic}[1]
		\Require{An instance $\calI$ satisfying the supply condition and $\calJ\in \apportiong(\iparty)$.}
		\Ensure{A set of allocations satisfying condition \ref{parity-a} and \ref{parity-b}.}
		\vspace{.2cm}
		\State For every $i\in [n]$ and each $t\in \{\ftype,\mtype\}$ define $\calP_{it}(\calI)=\sum_{c\in \calC_{i}^t}\votes_{\calI}(c)$ and $\calS_{it}(\calI)=|\calC_{it}|$.
		\State For every $x\in \calB_{\delta}(\calP(\calI),\calS(\calI),\calJ,\phi(\calI))$ do the following: For every $i\in [n]$ and each $t\in \{\ftype,\mtype\}$, define $\calE_{x}(c)=1$ for every $c\in \calC_i^t$ that belongs to the top $x_{it}$ candidates from $\calC_i^t,$ and zero otherwise.\label{step3}
		\State Return $\Theta_{\calJ}=\{\calE_x:x\in \calB_{\delta}(\calP(\calI),\calS(\calI),\calJ,\phi(\calI))\}$. 
	\end{algorithmic}
	\caption{\small Biproportional Parity \label{alg:biprop}}
\end{algorithm}
\noindent The {\it $\delta$-biproportional parity mechanism}, denoted by $\calM_{\delta}^{B}$, is defined as follows: For every pair $(\calI,\gamma)$ we have $\calM_{\delta}^{B}(\calI,\gamma)$ is equal to the union of the sets $\Theta_{\calJ}$ where $\calJ\in \apportiong(\iparty)$ and $\Theta_{\calJ}$ is the set computed by Algorithm \ref{alg:biprop}.
This algorithm computes a biproportional solution for an instance $(\calP(\calI),\calS(\calI),\calJ,\phi(\calI))$ where $\calJ\in \apportiong(\iparty)$ and $\phi(\calI)$ is the parity marginal for $\calI$.
The following is the first main result for this mechanism.

\begin{theorem}
	\label{thm:main}
	For every signpost sequence $\delta$ with $\delta(1)>0$ and every signpost sequence $\gamma$ we have that $\calM_{\delta}^{B}$ is $\gamma$-satisfactory over the instances satisfying the supply condition.
\end{theorem}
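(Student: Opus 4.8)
The plan is to mirror the structure of the proof of Theorem~\ref{thm:chilean-satisfactory}: first establish that $\calM_{\delta}^{B}$ is a valid apportionment mechanism over the instances satisfying the supply condition, and then verify the three properties \ref{axiom2}--\ref{axiom3}--\ref{axiom4} one at a time. For validity, I would argue that (i)~for any $\calI$ satisfying the supply condition and any $\calJ\in\apportiong(\iparty)$, the set $\calB_{\delta}(\calP(\calI),\calS(\calI),\calJ,\phi(\calI))$ is nonempty, and (ii)~each $\calE_x$ produced in Step~\ref{step3} actually lies in $\calX(\calI,\gamma)$. For~(i), by Lemma~\ref{lem:biprop-duality} it suffices that the min-cost flow problem~(\ref{flow-obj})--(\ref{flow4}) is feasible; the capacity lower bounds are $\calJ_i$ on the source edges and $\phi_t(\calI)$ on the sink edges, the parallel-edge capacities sum to $\calS_{it}(\calI)=|\calC_i^t|$, and since $\sum_i\calJ_i=h=\phi_{\ftype}(\calI)+\phi_{\mtype}(\calI)$ and $|\calC_i^t|\ge\lceil h/2\rceil\ge\phi_t(\calI)$ by the supply condition, the same transportation-feasibility argument used in Lemma~\ref{lem:closed-supply} (or a direct flow-decomposition argument) shows a feasible integral flow exists; network-flow integrality then gives an integral extreme point, which by Lemma~\ref{lem:biprop-duality} yields a biproportional $x$. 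For~(ii), constraint~(\ref{biprop2}) is exactly~\ref{parity-b} with the chosen $\calJ$, and constraints~(\ref{biprop3})--(\ref{biprop4}) together with $x_{it}\le|\calC_i^t|$ guarantee $\calE_x$ is a well-defined $0/1$ allocation realizing the column marginals $\phi(\calI)$, which satisfy $|\phi_{\ftype}(\calI)-\phi_{\mtype}(\calI)|=(h\bmod 2)$, i.e.~\ref{parity-a}.

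Next, \textbf{Exactness}~\ref{axiom2}: suppose $\votes_{\calI}\in\calX(\calI,\gamma)$, so $\votes_{\calI}(c)\in\{0,1\}$, exactly $h$ candidates receive one vote, the selected set meets both marginals. As in the proof of Theorem~\ref{thm:chilean-satisfactory}, Lemma~\ref{lem:jefferson-one-dim}\ref{jeff-proportional} with $\lambda=1$ forces $\apportiong(\iparty)=\{\calJ\}$ with $\calJ_i=\calP_{i\ftype}(\calI)+\calP_{i\mtype}(\calI)$; moreover $\calP_{it}(\calI)$ equals the number of selected candidates of party $i$ and type $t$, and these numbers already satisfy~(\ref{biprop2})--(\ref{biprop4}). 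I would then check that the matrix $x$ with $x_{it}=\calP_{it}(\calI)$ is the \emph{unique} element of $\calB_{\delta}(\calP(\calI),\calS(\calI),\calJ,\phi(\calI))$: since the flow objective is strictly convex in each $z_{it}$ along the parallel edges and the marginals pin down a transportation polytope whose only feasible point compatible with biproportionality is $x$ (one can take $\lambda_i=\mu_t=1$ and verify~(\ref{biprop1}) using $\delta(1)>0$ so that $\calR_\delta(\calP_{it})\ni \calP_{it}$ whenever $\calP_{it}\ge1$, and handle $\calP_{it}=0$ via the $x_{it}<\calS_{it}$ clause). Then $\calE_x=\votes_{\calI}$, giving $\calM_\delta^B(\calI,\gamma)=\{\votes_{\calI}\}$. \textbf{Scaling}~\ref{axiom3}: replacing $\votes_{\calI}$ by $\alpha\votes_{\calI}$ multiplies $\calP(\calI)$ by $\alpha$, leaves $\calS(\calI)$ and $\phi(\calI)$ unchanged (the tie-breaking for the vote-leading type is scale-invariant by assumption), and by Lemma~\ref{lem:jefferson-one-dim}\ref{jeff-scaling} leaves $\apportiong(\iparty)$ unchanged; finally the substitution $\lambda_i\mapsto\lambda_i/\alpha$ (or absorbing $\alpha$ into $\mu$) shows $\calB_\delta(\alpha\calP(\calI),\calS(\calI),\calJ,\phi(\calI))=\calB_\delta(\calP(\calI),\calS(\calI),\calJ,\phi(\calI))$ from Definition~\ref{def:biproportional}, since $\alpha\calP_{it}\lambda_i\mu_t=\calP_{it}(\alpha\lambda_i)\mu_t$. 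The candidate ranking $\succ_{\alpha\calI}$ agrees with $\succ_{\calI}$, so Step~\ref{step3} produces the identical allocations, and $\calM_\delta^B(\calI,\gamma)=\calM_\delta^B(\alpha\calI,\gamma)$.

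\textbf{Monotonicity}~\ref{axiom4} is the step I expect to be the main obstacle. Let $\calG$ be a voting increment of $\calI$ in a candidate $c$ of party $\pparty$ and type $\gtype$; then $\calP_{\pparty\gtype}(\calG)>\calP_{\pparty\gtype}(\calI)$ and all other entries, as well as $\calS$, are unchanged, and $\phi(\calG)=\phi(\calI)$ (the vote-leading type can only move \emph{towards} $\gtype$, and in the even-$h$ case $\phi$ is fixed regardless). Fix $\calE_{\calI}\in\calM_\delta^B(\calI,\gamma)$, arising from $\calJ(\calI)\in\apportiong(\calQ(\calI))$ and $x\in\calB_\delta(\calP(\calI),\calS(\calI),\calJ(\calI),\phi(\calI))$. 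By Lemma~\ref{lem:jefferson-one-dim}\ref{jeff-monotone-strong} there is $\calJ(\calG)\in\apportiong(\calQ(\calG))$ with $\calJ_\pparty(\calG)\ge\calJ_\pparty(\calI)$ and $\calJ_i(\calG)\le\calJ_i(\calI)$ for $i\ne\pparty$; I would use this $\calJ(\calG)$ as the row marginal for $\calG$. The remaining task is a \emph{two-dimensional monotone-comparative-statics} argument: increasing the ``attractiveness'' parameter $\calP_{\pparty\gtype}$ in the biproportional flow problem (equivalently, shifting the cost curve on the $(\pparty,\gtype)$ parallel edges downward by a constant in log-space), together with the coordinated change of row marginals from $\calJ(\calI)$ to $\calJ(\calG)$, admits a biproportional solution $x'$ with $x'_{\pparty\gtype}\ge x_{\pparty\gtype}$. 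I would prove this by a flow-augmentation / exchange argument on the min-cost flow network: compare optimal flows $w$ (for $\calI$) and $w'$ (for $\calG$), take their symmetric difference, decompose it into cycles and into paths forced by the marginal change $\calJ(\calI)\to\calJ(\calG)$, and show that any such cycle or path that would decrease flow on the $(\pparty,\gtype)$ edges contradicts optimality for $\calG$ (the $(\pparty,\gtype)$ edges became strictly cheaper while everything else stayed the same, and the net marginal flow into party $\pparty$ did not decrease) — this is the analogue, at the level of flows, of the single-party monotonicity packaged in Lemma~\ref{lem:jefferson-one-dim}\ref{jeff-monotone}, and the $\calJ_i(\calG)\le\calJ_i(\calI)$ direction for $i\ne\pparty$ is exactly what keeps the column-$\gtype$ total available to party $\pparty$ from shrinking. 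Finally, $x'_{\pparty\gtype}\ge x_{\pparty\gtype}$ means $\calE_{x'}$ selects at least as many of the top candidates of $\calC_\pparty^{\gtype}$, and since the ranking $\succ_{\calG}$ moves $c$ only upward among its type, $\calE_{\calI}(c)=1$ implies $\calE_{x'}(c)=1$; setting $\calE_{\calG}=\calE_{x'}$ gives $\calE_{\calG}(c)\ge\calE_{\calI}(c)$, completing the proof. The delicate points to get right are that the column marginal $\phi$ genuinely does not change, and that the simultaneous perturbation of both a cost and the row marginals still yields a monotone response on the targeted entry; I would isolate that as a standalone lemma about capacitated biproportional/transportation problems before invoking it here.
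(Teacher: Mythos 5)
Your overall architecture is the right one and matches the paper's: establish validity (non-emptiness of $\calB_{\delta}(\calP(\calI),\calS(\calI),\calJ,\phi(\calI))$ via feasibility of the flow/transportation system under the supply condition, plus the observation that conditions \eqref{biprop2}--\eqref{biprop3} translate directly into \ref{parity-b} and \ref{parity-a}), and then check Exactness, Scaling and Monotonicity. Validity and Scaling are essentially the paper's arguments. But there are two genuine gaps. First, in Exactness you must show that $x=\calP(\calI)$ is the \emph{unique} $\delta$-biproportional solution (property \ref{axiom2} demands the singleton $\{\votes_{\calI}\}$), and your justification via strict convexity does not work: the flow objective \eqref{flow-obj} is linear in the variables $w_{it\ell}$, and its piecewise-linear convex reformulation in $z$ is not strictly convex, so optimal solutions need not be unique. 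The paper instead invokes the tie structure of Lemma~\ref{lem:technical-biprop}~\ref{opt:ties}: a second biproportional solution would force two parties to induce a cycle with $\delta(\calP_{\pparty\ftype}(\calI)+1)=\calP_{\pparty\ftype}(\calI)$ and $\delta(\calP_{\pparty\mtype}(\calI))=\calP_{\pparty\mtype}(\calI)$ holding simultaneously (using $\lambda=\mu=\mathrm{1}$), which contradicts the disjunction property \ref{divisor3} of signpost sequences. Some argument of this kind is unavoidable; without it you have only exhibited \emph{a} solution, not excluded others.

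Second, Monotonicity --- which you correctly identify as the crux --- is left as a plan rather than a proof: the ``flow-augmentation / exchange argument'' you describe is precisely the content that has to be established, and deferring it to an unproved ``standalone lemma about capacitated biproportional/transportation problems'' leaves the theorem unproved. The paper carries this step out concretely: assuming $x_{\pparty\gtype}(\calI)>x_{\pparty\gtype}(\calG)$, it proves Claim~\ref{claim:cycle} (there is a party $\qparty\ne\pparty$ inducing a cycle between $x(\calI)$ and $x(\calG)$, using $\calJ'_{\pparty}\ge\calJ_{\pparty}$, $\calJ'_{\qparty}\le\calJ_{\qparty}$ and $\phi_{\gtype}(\calG)\ge\phi_{\gtype}(\calI)$), and then chains the two families of inequalities of Theorem~\ref{thm:cycles} for the pair $(\pparty,\qparty)$, exploiting $\calP_{\pparty\gtype}(\calI)<\calP_{\pparty\gtype}(\calG)$ and the equality of all other matrix entries, to reach a contradiction. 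Your flow-language sketch is morally the same mechanism (the cycle inequalities are the no-negative-residual-cycle optimality conditions), so the plan is salvageable, but as written the decisive exchange step is asserted, not proved. One smaller inaccuracy in the same part: $\phi(\calG)=\phi(\calI)$ can fail when $h$ is odd and the increment flips the vote-leading type to $\gtype$; what holds, and what the argument actually needs, is only the inequality $\phi_{\gtype}(\calG)\ge\phi_{\gtype}(\calI)$.
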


Before proving the theorem, we show that when the instance $\calI$ satisfies the supply condition, there exists a biproportional solution for the corresponding instance.

\begin{proposition}
\label{lem:biprop-supply-feasible}
Let $\calI$ be an instance satisfying the supply condition, let $\gamma$ be a signpost sequence and consider $\calJ\in \apportiong(\iparty)$.
Then, for every $\delta$ with $\delta(1)>0$ we have that $\calB_{\delta}(\calP(\calI),\calS(\calI),\calJ,\phi(\calI))\ne \emptyset$.
In particular, we have that $\calM_{\delta}^B(\calI,\gamma)\ne \emptyset$.
\end{proposition}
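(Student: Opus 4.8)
The plan is to reduce the existence of a $\delta$-biproportional solution to the feasibility of the min-cost flow problem \eqref{flow-obj}--\eqref{flow4}, and then to exhibit an explicit feasible flow using the supply condition. By Lemma \ref{lem:biprop-duality}, a matrix $x$ lies in $\calB_{\delta}(\calP(\calI),\calS(\calI),\calJ,\phi(\calI))$ whenever it is (the integral part of) an optimal extreme point of the flow program, since $\delta(1)>0$ guarantees the costs $\log(\delta(\ell)/\calP_{it})$ are well defined and the network-flow theory quoted just before that lemma supplies integrality of extreme points. Hence it suffices to show that the flow program has a feasible solution: a bounded feasible min-cost flow problem attains its optimum at an extreme point, and that extreme point is the desired $x$ together with the dual certificate $(w,\Lambda,\calU,\beta)$.

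So the core task is to construct one $z$ (equivalently one feasible $(w,z)$) satisfying \eqref{flow1}--\eqref{flow4}. First I would unpack the constraints: I need $0\le z_{it}\le \calS_{it}(\calI)=|\calC_i^t|$, the row sums $z_{i\ftype}+z_{i\mtype}=\calJ_i$, and the column sums $\sum_i z_{it}=\phi_t(\calI)$. This is exactly the polytope $\calZ(\calI,\calJ,\gamma)$ from Proposition \ref{prop:linear-program}, except that \eqref{LPfeasible1} is the inequality $\sum_i y_{it}\ge \lfloor h/2\rfloor$ rather than the equalities $\sum_i z_{it}=\phi_t(\calI)$; but since $\phi_{\ftype}(\calI)+\phi_{\mtype}(\calI)=h=\sum_i\calJ_i$ and each $\phi_t(\calI)\in\{\lfloor h/2\rfloor,\lceil h/2\rceil\}$, the two equalities together with the row equalities are equivalent to the two inequalities. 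Therefore it is enough to invoke Lemma \ref{lem:closed-supply}, which already proves $\calZ(\calI,\calJ,\gamma)\ne\emptyset$ for every $\calJ\in\apportiong(\iparty)$ under the supply condition, and then observe that any point of $\calZ(\calI,\calJ,\gamma)$ with the column sums pinned to $\phi(\calI)$ (which exists because the polytope is nonempty and the feasible column-sum pair is forced once one coordinate is chosen in the allowed window) yields a feasible $z$. Concretely: take any $y\in\calZ(\calI,\calJ,\gamma)$; since $\sum_{it}y_{it}=\sum_i\calJ_i=h$ and each column sum is $\ge\lfloor h/2\rfloor$, the two column sums must be $\{\lfloor h/2\rfloor,\lceil h/2\rceil\}$ or $\{h/2,h/2\}$, and if the labelling does not match $\phi(\calI)$ one can shift mass between the two types along a party where both $y_{i\ftype}$ and $y_{i\mtype}$ have slack — such a party exists by a counting argument analogous to the one in the proof of Lemma \ref{lem:closed-supply} — to correct it, or more simply re-run the feasibility argument of Lemma \ref{lem:closed-supply} with the column lower bounds set to $\phi_\ftype(\calI)$ and $\phi_\mtype(\calI)$ individually, which the same dual computation handles verbatim since $\phi_t(\calI)\le\lceil h/2\rceil\le|\calC_i^t|$ is not what is used — rather $\sum_i\phi_t(\calI)=h$ and $\calJ_i\le h$ are.

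I expect the main obstacle to be purely bookkeeping: matching the inequality form of \eqref{LPfeasible1} to the equality form of \eqref{flow3} and confirming that the feasible column-sum vector produced by Lemma \ref{lem:closed-supply} can be taken to equal the prescribed parity marginal $\phi(\calI)$ rather than its transpose. Once a feasible flow is in hand, the remaining steps are immediate: the objective \eqref{flow-obj} is finite on a nonempty polytope that is bounded (every $w_{it\ell}\in[0,1]$ and finitely many edges), so an optimal extreme-point flow $(x,w)$ exists and is integral in $x$; Lemma \ref{lem:biprop-duality} then certifies $x\in\calB_{\delta}(\calP(\calI),\calS(\calI),\calJ,\phi(\calI))$, proving the first claim. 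For the ``in particular'' statement, Algorithm \ref{alg:biprop} applied to this $x$ and $\calJ$ outputs $\calE_x$, so $\Theta_{\calJ}\ne\emptyset$, and since $\calJ\in\apportiong(\iparty)$ was arbitrary (and $\apportiong(\iparty)\ne\emptyset$ always), $\calM_{\delta}^B(\calI,\gamma)\ne\emptyset$.
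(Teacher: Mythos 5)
Your proposal is correct and follows essentially the same route as the paper: reduce via Lemma \ref{lem:biprop-duality} to feasibility of the flow program, obtain a point of $\calZ(\calI,\calJ,\gamma)$ from Lemma \ref{lem:closed-supply} and Proposition \ref{prop:linear-program}, and when $h$ is odd repair the column sums to match $\phi(\calI)$ by shifting one unit within a single party. The only step you leave implicit is the check that this shift respects the capacity $\calS_{i\bar s}$, which the paper verifies in one line from $y_{i\bar s}\le\lfloor h/2\rfloor$ and the supply condition $\lceil h/2\rceil\le\calS_{i\bar s}$.
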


\begin{proof}
Thanks to Lemma \ref{lem:biprop-duality} it is sufficient to show that there exists a pair $(z,w)$ satisfying the constraints (\ref{flow1})-(\ref{flow4}) for the two-dimensional instance $(\calP(\calI),\calS(\calI),\calJ,\phi(\calI))$.	
By Lemma \ref{lem:closed-supply} and Proposition \ref{prop:linear-program} we have that there exists an integral vector $y\in \calZ(\calI,\calJ,\gamma)$.
It satisfies the following:
\begin{align*}
	\sum_{i\in [n]}y_{it}&\ge \lfloor h/2\rfloor \quad \text{ for each }t\in \{\ftype,\mtype\},\\
	y_{i\ftype}+y_{i\mtype}&=\calJ_i \quad\quad\;\; \text{ for every }i\in [n],\\
	0\le y_{it}&\le \calS_{it}(\calI)\quad \text{ for every }i\in [n]\text{ and each }t\in \{\ftype,\mtype\}.
\end{align*}
Given an integral $y$ satisfying the above set of inequalities we define the pair $(z(y),w(y))$ as follows: For every $i\in [n]$ and each $t\in \{\ftype,\mtype\}$ we have $w_{it\ell}(y)=1$ for each $\ell\in [y_{it}]$ and zero otherwise; $z_{it}(y)=\sum_{\ell=1}^{\calS_{it}}w_{it\ell}(y)$.
Suppose that $h$ is even.
Since $\sum_{i\in [n]}\calJ_i=h$, in this case we have that $\sum_{i\in [n]}y_{it}=h/2=\phi_t(\calI)$ for each $t\in \{\ftype,\mtype\}$ and by construction the pair $(z(y),w(y))$ satisfies (\ref{flow1})-(\ref{flow4}) for the two-dimensional instance $(\calP(\calI),\calS(\calI),\calJ,\phi(\calI))$.
Now suppose that $h$ is odd and let $\bar s$ be the vote leading type and $s$ the other type.
If $\sum_{i\in [n]}y_{i\bar s}=\lceil h/2\rceil$ then the pair $(z(y),w(y))$ satisfies (\ref{flow1})-(\ref{flow4}).
Otherwise, consider any $i\in [n]$ such that $y_{is}>0$, and define the solution $\bar y$ as follows: $\bar y_{i\bar s}=y_{i\bar s}+1$, $\bar y_{is}=y_{is}-1$ and $\bar y_{jt}= y_{jt}$ otherwise.
Observe that $y_{i\bar s}\le \lfloor h/2\rfloor$ and therefore $\bar y_{i\bar s}\le \lceil h/2\rceil\le \calS_{i\bar s}$.
Then, by construction we have that the pair $(z(\bar y),w(\bar y))$ satisfies (\ref{flow1})-(\ref{flow4}) for the two-dimensional instance $(\calP(\calI),\calS(\calI),\calJ,\phi(\calI))$.
\end{proof}

\begin{lemma}
\label{lem:biprop-correct}
Let $\calI$ be an instance satisfying the supply condition and consider $\calJ\in \apportiong(\iparty)$ for some signpost sequence $\gamma$.
Then, for every signpost sequence $\delta$, every solution in the output of Algorithm \ref{alg:biprop} satisfies conditions \ref{parity-a} and \ref{parity-b}.  
In particular, the $\delta$-biproportional parity mechanism $\calM_{\delta}^B$ is a valid apportionment mechanism over the instances satisfying the supply condition.
\end{lemma}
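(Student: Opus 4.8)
The plan is a direct verification: I would show that every allocation $\calE_x$ produced at Step~\ref{step3} of Algorithm~\ref{alg:biprop} lies in $\calX(\calI,\gamma)$, and then read off the non-emptiness requirement from Proposition~\ref{lem:biprop-supply-feasible}.

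First I would fix $\calJ\in\apportiong(\iparty)$ and a matrix $x\in\calB_\delta(\calP(\calI),\calS(\calI),\calJ,\phi(\calI))$. The key observation linking the integer matrix $x$ back to the allocation $\calE_x$ is the capacity bound (\ref{biprop4}): since $0\le x_{it}\le\calS_{it}(\calI)=|\calC_i^t|$, the ``top $x_{it}$ candidates of $\calC_i^t$'' form a well-defined set of exactly $x_{it}$ candidates, so that $\sum_{c\in\calC_i^t}\calE_x(c)=x_{it}$ for every $i\in[n]$ and $t\in\{\ftype,\mtype\}$. Summing this identity over the two types and using the row-marginal equation (\ref{biprop2}) yields $\sum_{c\in\calC_i}\calE_x(c)=x_{i\ftype}+x_{i\mtype}=\calJ_i$ for every $i\in[n]$, which is exactly property~\ref{parity-b}. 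Summing instead over $i\in[n]$ and using the column-marginal equation (\ref{biprop3}) yields $\sum_{c\in\calC^t}\calE_x(c)=\phi_t(\calI)$ for each type $t$; by the definition of the parity marginal $\phi(\calI)$ we have $\phi_\ftype(\calI)=\phi_\mtype(\calI)=h/2$ when $h$ is even and $\{\phi_\ftype(\calI),\phi_\mtype(\calI)\}=\{\lceil h/2\rceil,\lfloor h/2\rfloor\}$ when $h$ is odd, so in both cases $|\phi_\ftype(\calI)-\phi_\mtype(\calI)|=(h\bmod 2)$, which is property~\ref{parity-a}. Hence $\calE_x\in\calX(\calI,\gamma)$ and therefore $\calM_\delta^B(\calI,\gamma)\subseteq\calX(\calI,\gamma)$.

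For the ``in particular'' claim, since $\calI$ satisfies the supply condition we have $\calX(\calI,\gamma)\ne\emptyset$ by Lemma~\ref{lem:closed-supply}, so it remains only to check that $\calM_\delta^B(\calI,\gamma)\ne\emptyset$; this is immediate from Proposition~\ref{lem:biprop-supply-feasible}, which gives $\calB_\delta(\calP(\calI),\calS(\calI),\calJ,\phi(\calI))\ne\emptyset$ (for $\delta$ with $\delta(1)>0$) and hence $\Theta_\calJ\ne\emptyset$. I do not expect a real obstacle here: everything is a matter of unwinding Definition~\ref{def:biproportional} and the definition of $\phi(\calI)$. The only point deserving a sentence of care is the equality $\sum_{c\in\calC_i^t}\calE_x(c)=x_{it}$, which is precisely where the supply condition enters, through the capacity bound (\ref{biprop4}), to guarantee that each cell $\calC_i^t$ contains enough candidates to realize $x_{it}$ selections.
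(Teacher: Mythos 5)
Your proof is correct and follows essentially the same route as the paper's: a direct unwinding of Definition~\ref{def:biproportional} to read off conditions \ref{parity-a} and \ref{parity-b} from the marginals (\ref{biprop2})--(\ref{biprop3}), with non-emptiness supplied by Proposition~\ref{lem:biprop-supply-feasible}. Your only (harmless) imprecision is attributing the identity $\sum_{c\in\calC_i^t}\calE_x(c)=x_{it}$ to the supply condition --- it follows already from the capacity bound (\ref{biprop4}) in the definition of a biproportional solution; the supply condition is needed only for the non-emptiness half.
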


\begin{proof}
Let $\calI$ be an instance that satisfies the supply condition and let $\gamma$ be a signpost sequence.
Consider $\calJ\in \apportiong(\iparty)$ 
Thanks to Proposition \ref{lem:biprop-supply-feasible} we have that $\calM^B_{\delta}(\calI,\gamma)\ne \emptyset$ and therefore it is sufficient to show in what follows that $\calM^B_{\delta}(\calI,\gamma)\subseteq \calX(\calI,\gamma)$.
Let $\calE_{x}\in \Theta_{\calJ}$, where $\Theta_{\calJ}$ is the output of Algorithm \ref{alg:biprop} and $x\in (\calP(\calI),\calS(\calI),\calJ,\phi(\calI))$.
By construction it holds that $\sum_{c\in \calC_i}\calE_x(c)=x_{i\ftype}+x_{i\mtype}=\calJ_i$ for every $i\in [n]$ and therefore $\calE_x$ satisfies condition \ref{parity-b}.
On the other hand, we have that  $\sum_{c\in \calC^t}\calE_x(c)=\sum_{i\in [n]}x_{it}=\phi_t(\calI)$.
By definition of the parity marginals it holds that $|\phi_{\ftype}(\calI)-\phi_{\mtype}(\calI)|=(h\text{ mod } 2)$ and therefore $\calE_x$ satisfies condition\ref{parity-a}.
\end{proof}

Before proving Theorem~\ref{thm:main} we need a few more technical results about the biproportional solutions and the minimum cost flow problem (\ref{flow-obj})-(\ref{flow4}).
\begin{definition}
Given two different matrices $x$ and $\tilde x$ satisfying (\ref{biprop2})-(\ref{biprop3}), we say that two different parties $\pparty,\qparty\in [n]$ induce a {\it cycle} for $x$ and $\tilde x$ if the following inequalities are satisfied: $\tilde x_{\pparty\ftype}\ge x_{\pparty\ftype}+1$, $\tilde x_{\pparty\mtype}\le x_{\pparty\mtype}-1$, $\tilde x_{\qparty\mtype}\ge x_{\qparty\mtype}+1$ and $\tilde x_{\qparty\ftype}\le x_{\qparty\ftype}-1$.
\end{definition}
Observe that such two parties inducing a cycle are always guaranteed to exists when $x\ne \tilde x$ since both have the same row and column marginals.
The first statement in the following lemma corresponds to a structural result for the instances in which there is no unique biproportional solution, and that is closely related to these cycles.

\begin{lemma}
\label{lem:technical-biprop}
Let $(\calP,\calS,\calJ,\phi)$ be a two-dimensional instance with supply.
Then, for every signpost sequence $\delta$ the following holds:
\begin{enumerate}[label=(\alph*)]
	\item \label{opt:ties}
Let $(x,\lambda,\mu)$ and $(\tilde x,\tilde \lambda,\tilde \mu)$ be $\delta$-biproportional solutions for $(\calP,\calS,\calJ,\phi)$ with $x\ne \tilde x$. 
	Then, there exists two parties $\pparty,\qparty\in [n]$ inducing a cycle for $x$ and $\tilde x$ and such that 
	$\delta(x_{\pparty\ftype}+1)=\calP_{\pparty\ftype}\lambda_{\pparty}\mu_{\ftype}$,
	$\delta(x_{\pparty\mtype})=\calP_{\pparty\mtype}\lambda_{\pparty}\mu_{\mtype}$,
	$\delta(x_{\qparty\mtype}+1)=\calP_{\pparty\mtype}\lambda_{\pparty}\mu_{\mtype}$ and
	$\delta(x_{\qparty\ftype})=\calP_{\qparty\ftype}\lambda_{\qparty}\mu_{\ftype}$.
	\item For every positive real value $\alpha>0$, we have that $(x,\lambda,\mu)$ is a $\delta$-biproportional solution for $(\calP,\calS,\calJ,\phi)$ if and only if $(x,\frac{1}{\sqrt{\alpha}}\lambda,\frac{1}{\sqrt{\alpha}}\mu)$ is a biproportional solution for $(\alpha\calP,\calS,\calJ,\phi)$. \label{opt:scaling}
\end{enumerate}
\end{lemma}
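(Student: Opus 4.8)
The plan is to treat part~\ref{opt:scaling} first, since it is immediate. The only place where $\calP$, $\lambda$ and $\mu$ enter Definition~\ref{def:biproportional} is the biproportionality condition~(\ref{biprop1}), through the product $\calP_{it}\lambda_i\mu_t$, and replacing the triple $(\calP,\lambda,\mu)$ by $(\alpha\calP,\tfrac{1}{\sqrt\alpha}\lambda,\tfrac{1}{\sqrt\alpha}\mu)$ leaves this product unchanged; constraints~(\ref{biprop2})--(\ref{biprop4}) involve only $x$, $\calJ$, $\phi$ and $\calS$, and for $\alpha>0$ the rescaled vectors $\tfrac{1}{\sqrt\alpha}\lambda,\tfrac{1}{\sqrt\alpha}\mu$ remain non-negative. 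Hence $(x,\lambda,\mu)$ is a $\delta$-biproportional solution for $(\calP,\calS,\calJ,\phi)$ if and only if $(x,\tfrac{1}{\sqrt\alpha}\lambda,\tfrac{1}{\sqrt\alpha}\mu)$ is one for $(\alpha\calP,\calS,\calJ,\phi)$; applying the same observation with $\alpha$ replaced by $1/\alpha$ gives the reverse implication.

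For part~\ref{opt:ties}, I would first produce the pair of parties by a counting argument. Since $x\neq\tilde x$ and both satisfy~(\ref{biprop2})--(\ref{biprop3}), there is a party $\pparty$ with $\tilde x_{\pparty\ftype}\ge x_{\pparty\ftype}+1$; by~(\ref{biprop2}) this forces $\tilde x_{\pparty\mtype}\le x_{\pparty\mtype}-1$, and since $\sum_{i}(\tilde x_{i\ftype}-x_{i\ftype})=0$ by~(\ref{biprop3}) there is another party $\qparty$ with $\tilde x_{\qparty\ftype}\le x_{\qparty\ftype}-1$, whence $\tilde x_{\qparty\mtype}\ge x_{\qparty\mtype}+1$. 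Thus $\pparty,\qparty$ induce a cycle for $x$ and $\tilde x$. These inequalities also guarantee, together with~(\ref{biprop4}), that in the network of Section~\ref{sec:biprop-algorithm} the four parallel edges from $u_{\pparty}$ to $v_{\ftype}$ with index $x_{\pparty\ftype}+1$, from $u_{\pparty}$ to $v_{\mtype}$ with index $x_{\pparty\mtype}$, from $u_{\qparty}$ to $v_{\mtype}$ with index $x_{\qparty\mtype}+1$, and from $u_{\qparty}$ to $v_{\ftype}$ with index $x_{\qparty\ftype}$ all exist (their indices lie in the allowed range and are positive).

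Next I would pass to the flow LP~(\ref{flow-obj})--(\ref{flow4}). By Lemma~\ref{lem:biprop-duality} there are an integral $w$ and a non-positive $\beta$ for which $(x,w,\Lambda,\calU,\beta)$ is an optimal primal--dual pair with $\Lambda_i=\log\lambda_i$ and $\calU_t=\log\mu_t$; moreover $w$ is the greedy fill, $w_{it\ell}=1$ for $\ell\le x_{it}$ and $0$ otherwise, because $\log(\delta(\ell)/\calP_{it})$ is increasing in $\ell$. Likewise, since $\tilde x$ is a $\delta$-biproportional solution, its greedy fill $w(\tilde x)$ is an optimal primal solution of~(\ref{flow-obj})--(\ref{flow4}). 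Fix $\epsilon\in(0,1)$ and set $w^{\star}=(1-\epsilon)\,w+\epsilon\,w(\tilde x)$, which is again primal-optimal, being a convex combination of optimal solutions of a linear program. On the edge from $u_{\pparty}$ to $v_{\ftype}$ with index $x_{\pparty\ftype}+1$ we have $w=0$ and $w(\tilde x)=1$, so $w^{\star}=\epsilon\in(0,1)$; on the edge from $u_{\pparty}$ to $v_{\mtype}$ with index $x_{\pparty\mtype}$ we have $w=1$ and $w(\tilde x)=0$, so $w^{\star}=1-\epsilon\in(0,1)$; and analogously $w^{\star}\in(0,1)$ on the two $\qparty$ edges listed above.

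Finally I would invoke complementary slackness between the primal-optimal $w^{\star}$ and the dual-optimal $(\Lambda,\calU,\beta)$. On each of these four edges, $w^{\star}_{it\ell}<1$ forces $\beta_{it\ell}=0$ by~(\ref{eq:dual3}), while $w^{\star}_{it\ell}>0$ with~(\ref{eq:dual2}) gives $\Lambda_i+\calU_t+\beta_{it\ell}=\log(\delta(\ell)/\calP_{it})$; combining, $\Lambda_i+\calU_t=\log(\delta(\ell)/\calP_{it})$, i.e. $\delta(\ell)=\calP_{it}\lambda_i\mu_t$ for the relevant triple $(i,t,\ell)$. Reading this off for the four edges yields $\delta(x_{\pparty\ftype}+1)=\calP_{\pparty\ftype}\lambda_{\pparty}\mu_{\ftype}$, $\delta(x_{\pparty\mtype})=\calP_{\pparty\mtype}\lambda_{\pparty}\mu_{\mtype}$, $\delta(x_{\qparty\mtype}+1)=\calP_{\qparty\mtype}\lambda_{\qparty}\mu_{\mtype}$ and $\delta(x_{\qparty\ftype})=\calP_{\qparty\ftype}\lambda_{\qparty}\mu_{\ftype}$, which is the asserted conclusion. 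The main delicate point is the bookkeeping in the middle two steps: identifying that the cycle move touches exactly the parallel edges of indices $x_{it}$ or $x_{it}+1$ (which relies on the greedy structure of $w$), checking that these edges exist and carry the stated flows, and making sure the dual solution used in the complementary-slackness argument is the one associated with $(x,\lambda,\mu)$ rather than with $\tilde x$.
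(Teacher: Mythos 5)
Your proof of part (b) is correct and coincides with the paper's one-line observation that the product $\calP_{it}\lambda_i\mu_t$ in condition (\ref{biprop1}) is invariant under the substitution $(\calP,\lambda,\mu)\mapsto(\alpha\calP,\lambda/\sqrt{\alpha},\mu/\sqrt{\alpha})$. For part (a), your counting argument producing the cycle is the same as the paper's, but from there you take a genuinely different route. The paper applies the pairwise characterization of Theorem~\ref{thm:cycles} to both $x$ and $\tilde x$ on the pair $(\pparty,\qparty)$ and, using the cycle inequalities and the monotonicity of $\delta$, closes a chain of four inequalities into a loop, forcing equality throughout, after which the four identities are read off from $\delta(x_{it})\le\calP_{it}\lambda_i\mu_t\le\delta(x_{it}+1)$. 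You instead work on the flow LP (\ref{flow-obj})--(\ref{flow4}): both greedy fills $w$ and $w(\tilde x)$ are primal-optimal by Lemma~\ref{lem:biprop-duality}, so a strict convex combination $w^{\star}$ is optimal and lies strictly between $0$ and $1$ precisely on the four parallel edges of indices $x_{\pparty\ftype}+1$, $x_{\pparty\mtype}$, $x_{\qparty\mtype}+1$, $x_{\qparty\ftype}$ (which exist by (\ref{biprop4}) and nonnegativity of $\tilde x$); complementary slackness of $w^{\star}$ against the dual $(\Lambda,\calU,\beta)$ attached to $(x,\lambda,\mu)$ forces $\beta_{it\ell}=0$ and tightness of (\ref{eq:dual1}) on those edges, which is exactly the four claimed equalities. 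This ``relative interior of the optimal face'' argument is valid, and you correctly pair $w^{\star}$ with the dual coming from $x$ rather than from $\tilde x$. The one caveat is coverage: the network-flow formulation and Lemma~\ref{lem:biprop-duality} are set up in the paper only for $\delta(1)>0$ (the edge cost $\log(\delta(1)/\calP_{it})$ is undefined when $\delta(1)=0$), so your argument for part (a) establishes the lemma only for such $\delta$, whereas the statement and the paper's proof via Theorem~\ref{thm:cycles} cover every signpost sequence. Since the paper only invokes part (a) under the hypothesis $\delta(1)>0$, this restriction is harmless in context, but you should either state it explicitly or handle $\delta(1)=0$ separately (e.g., via Theorem~\ref{thm:cycles} or a perturbation of $\delta$).
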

To prove Lemma \ref{lem:technical-biprop} we use
the following result that holds even for more general instances where the number of columns is larger than two.
For simplicity, we state a form of the result that suffices for our purposes.

\begin{theorem}[\cite{Pukk2017}]
	\label{thm:cycles}
	Let $(\calG,\calJ,\phi)$ be a two-dimensional instance.
	Then, for every signpost sequence $\delta$, we have that $x\in \calB_{\delta}(\calG,\phi,\calJ)$ if and only if for any pair $i,j\in [n]$ we have that 
	\begin{align*}
		\frac{\delta(x_{i\ftype})}{\calG_{i\ftype}}\cdot \frac{\delta(x_{j\mtype})}{\calG_{j\mtype}}\le \frac{\delta(x_{i\mtype}+1)}{\calG_{i\mtype}}\cdot \frac{\delta(x_{j\ftype}+1)}{\calG_{j\ftype}}\quad \text{ and }\quad
		\frac{\delta(x_{i\mtype})}{\calG_{i\mtype}}\cdot \frac{\delta(x_{j\ftype})}{\calG_{j\ftype}}\le \frac{\delta(x_{i\ftype}+1)}{\calG_{i\ftype}}\cdot \frac{\delta(x_{j\mtype}+1)}{\calG_{j\mtype}}.
	\end{align*}
	Furthermore, $x$ is the unique $\delta$-biproportional solution if every inequality is satisfied strictly.
\end{theorem}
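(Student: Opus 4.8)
The plan is to reformulate biproportionality through the multiplier ``sandwich'' inequalities and then reduce the entire statement to a one-parameter interval-feasibility question, which is possible precisely because there are only two columns. Since the instance $(\calG,\calJ,\phi)$ carries no supply bound, the condition $x_{it}\in\calR_{\delta}(\calG_{it}\lambda_i\mu_t)$ holds at every cell, and by the definition of $\calR_{\delta}$ it is equivalent to $\delta(x_{it})\le \calG_{it}\lambda_i\mu_t\le \delta(x_{it}+1)$. For the forward implication I would fix a pair $i,j\in[n]$ and combine four of these bounds: the lower bounds at $(i,\ftype)$ and $(j,\mtype)$ together with the upper bounds at $(i,\mtype)$ and $(j,\ftype)$. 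Multiplying them, the factor $\lambda_i\lambda_j\mu_{\ftype}\mu_{\mtype}$ appears on both sides and cancels, leaving exactly $\frac{\delta(x_{i\ftype})}{\calG_{i\ftype}}\frac{\delta(x_{j\mtype})}{\calG_{j\mtype}}\le \frac{\delta(x_{i\mtype}+1)}{\calG_{i\mtype}}\frac{\delta(x_{j\ftype}+1)}{\calG_{j\ftype}}$, and the other cycle inequality follows by exchanging the roles of $\ftype$ and $\mtype$. This direction is routine.

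The converse is the heart of the argument. Given an integral $x$ that meets the marginals \eqref{biprop2}--\eqref{biprop3} and satisfies all cycle inequalities, I must produce multipliers. Writing $a_i=\delta(x_{i\ftype})/\calG_{i\ftype}$, $A_i=\delta(x_{i\ftype}+1)/\calG_{i\ftype}$, $b_i=\delta(x_{i\mtype})/\calG_{i\mtype}$ and $B_i=\delta(x_{i\mtype}+1)/\calG_{i\mtype}$, I would normalize $\mu_{\ftype}=1$, treat the ratio $r=\mu_{\mtype}$ as the single free parameter, and set $\nu_i=\lambda_i$. The two sandwich constraints in row $i$ then read $\nu_i\in[a_i,A_i]$ and $r\nu_i\in[b_i,B_i]$, so a feasible positive $\nu_i$ exists precisely when $r\in[\,b_i/A_i,\;B_i/a_i\,]$. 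A single $r>0$ serving all rows exists if and only if $\max_i b_i/A_i\le \min_j B_j/a_j$, that is $b_i a_j\le A_i B_j$ for every ordered pair $(i,j)$; substituting the definitions, this is exactly the family of cycle inequalities (the pair $(i,j)$ yields the second inequality, the pair $(j,i)$ the first). Choosing such an $r$ and then a positive $\nu_i$ in each nonempty intersection yields $\lambda,\mu>0$, which by the multiplier characterization establishes $x\in\calB_{\delta}(\calG,\calJ,\phi)$.

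For the uniqueness claim I would argue the contrapositive: if $x\ne\tilde x$ are both biproportional, then some cycle inequality for $x$ is tight. As noted right after the cycle definition, equality of the marginals forces a pair $\pparty,\qparty$ inducing a cycle. Writing the sandwich bounds for $x$ (with multipliers $\lambda,\mu$) and for $\tilde x$ (with $\tilde\lambda,\tilde\mu$) and using that $\delta$ is strictly increasing on the positive integers, each of the four cycle relations $\tilde x_{\pparty\ftype}\ge x_{\pparty\ftype}+1$, $\tilde x_{\qparty\mtype}\ge x_{\qparty\mtype}+1$, $\tilde x_{\pparty\mtype}\le x_{\pparty\mtype}-1$ and $\tilde x_{\qparty\ftype}\le x_{\qparty\ftype}-1$ becomes a comparison between a $\tilde\lambda\tilde\mu$-product and a $\lambda\mu$-product. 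Multiplying the two ``$\ge$'' relations and the two ``$\le$'' relations gives the same product $\tilde\lambda_{\pparty}\tilde\lambda_{\qparty}\tilde\mu_{\ftype}\tilde\mu_{\mtype}$ on both comparisons against $\lambda_{\pparty}\lambda_{\qparty}\mu_{\ftype}\mu_{\mtype}$, so all four must collapse to equalities. Tracing these equalities back along the chains pins down $\calG_{\pparty\ftype}\lambda_{\pparty}\mu_{\ftype}=\delta(x_{\pparty\ftype}+1)$, $\calG_{\pparty\mtype}\lambda_{\pparty}\mu_{\mtype}=\delta(x_{\pparty\mtype})$ and the two analogous identities at $\qparty$; substituting them shows that the cycle inequality attached to $(\pparty,\qparty)$ holds with equality, so strictness of every cycle inequality precludes a second solution.

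The main obstacle is this converse direction: the reduction to the scalar $r$ and the simultaneous nonemptiness of the row intervals is precisely where both cycle inequalities are genuinely needed, and some care is required at cells with $x_{it}=0$, where $\delta(x_{it})=0$ makes the lower bound vacuous, in order to keep the recovered multipliers strictly positive.
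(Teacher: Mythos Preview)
The paper does not prove this theorem: it is quoted from Pukelsheim's book and used as a black box (see the sentence preceding the statement). There is therefore no in-paper proof to compare against.

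That said, your proposal is a correct and clean self-contained proof. The forward direction is the standard multiplier-cancellation argument. The converse is the interesting part, and your reduction to a single scalar $r=\mu_{\mtype}/\mu_{\ftype}$ is exactly the right idea in the two-column case: the condition that the row intervals $[b_i/A_i,\,B_i/a_i]$ have a common point is $b_i a_j\le A_i B_j$ for all ordered pairs $(i,j)$, which unwinds to precisely the two cycle inequalities. For the uniqueness clause, your contrapositive is essentially the computation the paper carries out in the proof of Lemma~\ref{lem:technical-biprop}\ref{opt:ties} (there it is derived \emph{from} Theorem~\ref{thm:cycles}, whereas you obtain it directly from the sandwich bounds, which is cleaner and avoids circularity).

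Two small points worth tightening. First, in the converse you should say explicitly that $x$ is assumed to satisfy the marginals \eqref{biprop2}--\eqref{biprop3}; the theorem statement hides this inside the definition of $\calB_{\delta}$, and the cycle inequalities alone do not encode it. Second, the boundary cases you flag are real but benign: when $a_j=0$ interpret $B_j/a_j=+\infty$, and when $\delta(1)=0$ with $x_{i\ftype}=0$ one has $A_i=0$, which forces $\lambda_i\mu_{\ftype}=0$; this is consistent with the rounding rule since $\calR_{\delta}(0)=\{0\}$, and it simply reflects that with $\delta(1)=0$ a zero entry can only occur along a row or column whose multiplier vanishes. Once those conventions are fixed, the argument goes through without change.
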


\begin{proof}[Proof of Lemma \ref{lem:technical-biprop}]
Since $x\ne \tilde x$ and $\sum_{i\in [n]}x_{i\ftype}=\sum_{i\in [n]}\tilde x_{i\ftype}$, there exist two parties $\pparty,\qparty\in [n]$ such that $\tilde x_{\pparty\ftype}\ge x_{\pparty\ftype}+1$ and $\tilde x_{\qparty\ftype}\le x_{\qparty\ftype}-1<\calS_{\qparty,\ftype}$.
It follows that $\pparty$ and $\qparty$ induce a cycle for $x$ and $\tilde x$ since $x_{\pparty\ftype}+x_{\pparty\mtype}=\tilde x_{\pparty\ftype}+\tilde x_{\pparty\mtype}$ and $x_{\qparty\ftype}+x_{\qparty\mtype}=\tilde x_{\qparty\ftype}+\tilde x_{\qparty\mtype}$ and therefore $\tilde x_{\pparty\mtype}\le x_{\pparty\mtype}-1<\calS_{\pparty,\mtype}$ and $\tilde x_{\qparty\mtype}\ge x_{\qparty\mtype}+1$.
By Theorem~\ref{thm:cycles} we have that
\begin{align*}
\frac{\delta(\tilde x_{\pparty\ftype})}{\calP_{\pparty\ftype}}\cdot \frac{\delta(\tilde x_{\qparty\mtype})}{\calP_{\qparty\mtype}}\le 
 \frac{\delta(\tilde x_{\pparty\mtype}+1)}{\calP_{\pparty\mtype}}\cdot \frac{\delta(\tilde x_{\qparty\ftype}+1)}{\calP_{\qparty\ftype}},\quad 
\frac{\delta(x_{\pparty\mtype})}{\calP_{\pparty\mtype}}\cdot \frac{\delta(x_{\qparty\ftype})}{\calP_{\qparty\ftype}}\le 
 \frac{\delta(x_{\pparty\ftype}+1)}{\calP_{\pparty\ftype}}\cdot \frac{\delta(x_{\qparty\mtype}+1)}{\calP_{\qparty\mtype}}.
\end{align*}
The above inequalities, together with the fact that $\pparty$ and $\qparty$ induces a cycle for $x$ and $\tilde x$, and the monotnicity of the signpost sequence $\delta$, imply that 
\begin{align*}
\frac{\delta(x_{\pparty\mtype})}{\calP_{\pparty\mtype}}\cdot \frac{\delta(x_{\qparty\ftype})}{\calP_{\qparty\ftype}}&\le 
\frac{\delta(x_{\pparty\ftype}+1)}{\calP_{\pparty\ftype}}\cdot \frac{\delta(x_{\qparty\mtype}+1)}{\calP_{\qparty\mtype}}\\
 &\le\frac{\delta(\tilde x_{\pparty\ftype})}{\calP_{\pparty\ftype}}\cdot \frac{\delta(\tilde x_{\qparty\mtype})}{\calP_{\qparty\mtype}}\le 
\frac{\delta(\tilde x_{\pparty\mtype}+1)}{\calP_{\pparty\mtype}}\cdot \frac{\delta(\tilde x_{\qparty\ftype}+1)}{\calP_{\qparty\ftype}}\le 
\frac{\delta(x_{\pparty\mtype})}{\calP_{\pparty\mtype}}\cdot \frac{\delta(x_{\qparty\ftype})}{\calP_{\qparty\ftype}},
\end{align*}
and therefore the chain of inequalities is actually a chain of equalities. 
Then, we recover on one side that $\delta(x_{\pparty\mtype})=\calP_{\pparty\mtype}\lambda_{\pparty}\mu_{\mtype}$ and $\delta(x_{\qparty\ftype})=\calP_{\qparty\ftype}\lambda_{\qparty}\mu_{\ftype}$, and from the other side we recover $\delta(x_{\pparty\ftype}+1)=\calP_{\pparty\ftype}\lambda_{\pparty}\mu_{\ftype}$ and $\delta(x_{\qparty\mtype}+1)=\calP_{\qparty\mtype}\lambda_{\qparty}\mu_{\mtype}$.
We have that \ref{opt:scaling} follows since the scaling performed on the multipliers leaves invariant the quantities in (\ref{biprop1}).
\end{proof}

\begin{proof}[Proof of Theorem~\ref{thm:main}]
By Lemma \ref{lem:biprop-correct} we have that the $\delta$-biproportional parity mechanism is a valid apportionment mechanism over the instances satisfying the supply condition.
Therefore, it remains to check that mechanism satisfies the properties \ref{axiom2}-\ref{axiom3}-\ref{axiom4}.
Recall that for every instance $\calI$ we denote by $\calP(\calI)$ the matrix with entries in $[n]\times \{\ftype,\mtype\}$ such that $\calP_{it}(\calI)=\sum_{c\in \calC_{i}^t}\votes_{\calI}(c)$.\\

\noindent{\bf Exactness.} Now suppose that we are given an instance where the function $\votes_{\calI}$ is such that $\votes_{\calI}(c)\in \{0,1\}$ for every candidate $c\in \calC$ and the function $\votes_{\calI}$ satisfies \ref{parity-a} and \ref{parity-b}.
That is, there are exactly $h$ votes and exactly $h$ candidates obtain exactly one vote each. 
Since $\votes_{\calI}$ satisfies \ref{parity-a}, we have that $\sum_{c\in \calC}\votes_{\calI}(c)=h=\sum_{i\in [n]}(\calP_{i\ftype}(\calI)+\calP_{i\mtype}(\calI))$ and
therefore, we have that 
$\calQ_i(\calI)=\calP_{i\ftype}(\calI)+\calP_{i\mtype}(\calI)$ for each party $i\in [n]$ and $\sum_{i\in [n]}\calQ_i(\calI)=h$,
which by Lemma~\ref{lem:jefferson-one-dim}~\ref{jeff-proportional} with $\lambda=1$ implies that for every signpost sequence $\gamma$ we have that $\apportiong(\iparty)=\{\calJ\}$ where $\calJ_i=\calP_{i\ftype}(\calI)+\calP_{i\mtype}(\calI)$ for each party $i\in [n]$.
Therefore, for every signpost sequence $\gamma$ we have a unique instance $(\calP(\calI),\calS(\calI),\calJ,\phi(\calI))$ in this case.

For each party $i\in [n]$ consider the all ones vector $\mathrm{I}^n_i=1$ and for the type consider $\mathrm{I}_{\ftype}=\mathrm{I}_{\mtype}=1$.
We claim that the triplet $(\calP(\calI),\mathrm{I}^n,\mathrm{I})$ is the unique $\delta$-biproportional solution $(\calP(\calI),\calS(\calI),\calJ,\phi(\calI))$.
We start by showing that it is a biproportional solution.
The conditions (\ref{biprop1}) and (\ref{biprop4}) are clearly satisfied since $\calP_{it}(\calI)\in \calR_{\delta}(\calP_{it}(\calI))$ for each $i\in [n]$ and each $t\in \{\ftype,\mtype\}$, and since $\sum_{c\in \calC_i^t}\votes_{\calI}(c)=\calP_{it}(\calI)\le \calS_{it}(\calI)$.
We now check that $x=\calP(\calI)$ satisfies (\ref{biprop2})-(\ref{biprop3}).
Since the function $\votes_{\calI}$ is binary and it satisfies \ref{parity-a}, we have that $\sum_{i\in [n]}\calP_{it}(\calI)=\sum_{c\in \calC^t}\votes_{\calI}(c)=\phi_t(\calI)$ for each type $t\in \{\ftype,\mtype\}$, and therefore $(\calP(\calI),\mathrm{I}^n,\mathrm{I})$ satisfies (\ref{biprop3}).
Since $\votes_{\calI}$ satisfies \ref{parity-b}, we have that $\sum_{c\in \calC_i}\votes_{\calI}(c)=\calJ_i=\calP_{i\ftype}+\calP_{i\mtype}$ for every $i\in [n]$.
Then, $(\calP(\calI),\mathrm{I}^n,\mathrm{I})$ satisfies constraint (\ref{biprop2}).
 
Suppose there exists a different solution $(\tilde x,\tilde \lambda,\tilde \mu)$ for the instance.
By Lemma~\ref{lem:technical-biprop}~\ref{opt:ties}, we have that there exist two parties
$\pparty,\qparty\in [n]$ inducing a cycle for $\calP(\calI)$ and $\tilde x$ and therefore 
$\delta(\calP_{\pparty\ftype}(\calI)+1)=\calP_{\pparty\ftype}(\calI)$ and $\delta(\calP_{\pparty\mtype}(\calI))=\calP_{\pparty\mtype}(\calI)$, but this goes in contradiction to the disjunction property \ref{divisor3} satisfied by the signpost sequence $\delta$.
Therefore, $(\calP(\calI),\mathrm{I}^n,\mathrm{I})$ is the unique $\delta$-biproportional solution in this case.
In consequence, step~\ref{step3} of Algorithm \ref{alg:biprop} allocates exactly one seat to each of the candidates that got exactly one vote and it outputs $\{\votes_{I}\}$.
This shows that property \ref{axiom2} is satisfied. \\

\noindent{\bf Scaling.} Let $\alpha$ be any positive real and consider the instance $\alpha\calI=(\calC,\party_{\calI},\alpha\cdot \votes_{\calI},\type_{\calI},h)$ obtained by scaling the number of votes. 
By Lemma \ref{lem:jefferson-one-dim} \ref{jeff-scaling} we have $\apportiong(\calQ(\calI))=\apportiong(\calQ(\alpha\calI))$.
Furthermore, by Lemma~\ref{lem:technical-biprop}~\ref{opt:scaling}, for every signpost sequence $\gamma$ and every $\calJ\in \apportiong(\iparty)=\apportiong(\calQ(\alpha\calI))$ the $\delta$-biproportional solutions of the scaled instance $(\alpha \calP(\calI),\calS(\calI),\calJ,\phi(\calI))$ coincides with the set of solutions for $(\calP(\calI),\calS(\calI),\calJ,\phi(\calI))$.
Therefore, Algorithm \ref{alg:biprop} computes the same allocations as in the instance $\calI$.
This shows that property \ref{axiom3} is satisfied.\\

\noindent{\bf Monotonicity.} Let $\calI$ and $\calG$ be two instances satisfying the supply condition such that $\calG$ is a voting increment of $\calI$ in $c\in \calC$.
Suppose that $c\in \calC_p$ and let $\gtype$ be the type of $c$.
In particular, we have that $\calP_{\pparty \gtype}(\calI)< \calP_{\pparty \gtype}(\calG)$ and $\calP_{it}(\calI)= \calP_{it}(\calG)$ for every pair $(i,t)\ne (\pparty,\gtype)$.
Consider $\calJ\in \apportiong(\iparty)$.
By Lemma~\ref{lem:jefferson-one-dim}~\ref{jeff-monotone-strong} there exists $\calJ'\in \apportiong(\calQ(\calG))$ such that $\calJ_{\pparty}\le \calJ_{\pparty}'$ and $\calJ_{i}\ge \calJ_{i}'$ for every $i\ne \pparty$.
In order to show the monotonicity property it is sufficient to prove that for every biproportional solution $(x(\calI),\lambda(\calI),\mu(\calI))$ of $(\calP(\calI),\calS(\calI),\calJ,\phi(\calI))$ and every biproportional solution $(x(\calG),\lambda(\calG),\mu(\calG))$ of $(\calP(\calG),\calS(\calG),\calJ',\phi(\calG))$ we have that
\begin{equation}
x_{\pparty \gtype}(\calI)\le x_{\pparty \gtype}(\calG).\label{goal}
\end{equation}
From this the property follows, since the position of the candidate $c$ in the total order $(\calC,\succ_{\calG})$ is at least the position in the total order $(\calC,\succ_{\calI})$ and therefore $\calE_{x(\calG)}(c)\ge \calE_{x(\calI)}(c)$.
Observe that by Lemma \ref{lem:jefferson-one-dim} \ref{jeff-monotone} the inequality (\ref{goal}) holds immediately if there is a unique party.
Otherwise, suppose the claim does not hold, that is, suppose that there exists two solutions $x(\calI)$ and $x(\calG)$ such that $x_{\pparty \gtype}(\calI)> x_{\pparty \gtype}(\calG)$ and $n\ge 2$. 
Assume that $\gtype=\ftype$.
\begin{claim}
\label{claim:cycle}
If $x_{\pparty \ftype}(\calI)> x_{\pparty \ftype}(\calG)$, there exists a party $\mathsf{q}\ne \pparty$ such that the following holds: $(a)\; x_{\pparty \ftype}(\calG)\le x_{\pparty \ftype}(\calI)-1,\; (b)\; x_{\pparty \mtype}(\calG)\ge x_{\pparty \mtype}(\calI)+1,
(c)\; x_{\qparty \ftype}(\calG)\ge x_{\qparty \ftype}(\calI)+1, and\; (d)\; x_{\qparty \mtype}(\calG)\le x_{\qparty \mtype}(\calI)-1$.		  
\end{claim}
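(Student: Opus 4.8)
The plan is to prove the claim from the marginal constraints alone, without using the biproportionality condition~(\ref{biprop1}) --- the latter enters only afterwards, to turn the cycle exhibited here into a contradiction. Part $(a)$ is immediate: by hypothesis $x_{\pparty\ftype}(\calG) > x_{\pparty\ftype}(\calI)$ fails, and both quantities are integers, so $x_{\pparty\ftype}(\calG) \le x_{\pparty\ftype}(\calI) - 1$. Before the rest, I would record two monotonicity facts. First, the row marginals: by the choice of $\calJ' \in \apportiong(\calQ(\calG))$ coming from Lemma~\ref{lem:jefferson-one-dim}~\ref{jeff-monotone-strong}, we have $\calJ_{\pparty} \le \calJ'_{\pparty}$ and $\calJ_i \ge \calJ'_i$ for every $i \ne \pparty$. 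Second, the type-$\ftype$ column marginal cannot decrease, i.e.\ $\phi_{\ftype}(\calG) \ge \phi_{\ftype}(\calI)$: when $h$ is even both equal $h/2$; when $h$ is odd, $\phi_{\ftype}$ equals $\lceil h/2 \rceil$ precisely when $\ftype$ is vote leading, and since $\calG$ is a voting increment of $\calI$ in a candidate of type $\gtype = \ftype$, the total type-$\ftype$ vote count strictly increases while the type-$\mtype$ count is unchanged, so $\ftype$ remains vote leading in $\calG$ whenever it is vote leading in $\calI$.

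With these in hand, $(b)$ follows by comparing the $\pparty$-th row sums via~(\ref{biprop2}): $x_{\pparty\ftype}(\calG) + x_{\pparty\mtype}(\calG) = \calJ'_{\pparty} \ge \calJ_{\pparty} = x_{\pparty\ftype}(\calI) + x_{\pparty\mtype}(\calI)$, which together with $(a)$ gives $x_{\pparty\mtype}(\calG) \ge x_{\pparty\mtype}(\calI) + 1$. For $(c)$ I would run a pigeonhole argument on the $\ftype$ column: by~(\ref{biprop3}) and the column monotonicity, $\sum_{i\in[n]} x_{i\ftype}(\calG) = \phi_{\ftype}(\calG) \ge \phi_{\ftype}(\calI) = \sum_{i\in[n]} x_{i\ftype}(\calI)$, while the $\pparty$-th summand strictly drops by $(a)$; hence $\sum_{i\ne\pparty} x_{i\ftype}(\calG) > \sum_{i\ne\pparty} x_{i\ftype}(\calI)$, so there is some $\qparty \ne \pparty$ with $x_{\qparty\ftype}(\calG) \ge x_{\qparty\ftype}(\calI) + 1$. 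Finally $(d)$ mirrors $(b)$: comparing the $\qparty$-th row sums, $x_{\qparty\ftype}(\calG) + x_{\qparty\mtype}(\calG) = \calJ'_{\qparty} \le \calJ_{\qparty} = x_{\qparty\ftype}(\calI) + x_{\qparty\mtype}(\calI)$, and combined with $(c)$ this forces $x_{\qparty\mtype}(\calG) \le x_{\qparty\mtype}(\calI) - 1$.

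The argument is elementary pigeonhole plus bookkeeping with the equalities~(\ref{biprop2})--(\ref{biprop3}); the only delicate point is the column monotonicity $\phi_{\ftype}(\calG) \ge \phi_{\ftype}(\calI)$, where it is essential that the incremented candidate has type equal to $\gtype = \ftype$, so the parity marginal can only shift toward $\ftype$ --- this is also where the assumed scaling-invariance, and hence consistency, of the vote-leading tie-breaking rule is used.
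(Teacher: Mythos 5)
Your proof is correct and follows essentially the same route as the paper: part $(a)$ by integrality, $(b)$ and $(d)$ by comparing row sums using $\calJ'_{\pparty}\ge\calJ_{\pparty}$ and $\calJ'_{\qparty}\le\calJ_{\qparty}$, and $(c)$ by pigeonhole on the $\ftype$ column using $\phi_{\ftype}(\calG)\ge\phi_{\ftype}(\calI)$. You in fact justify the column-marginal monotonicity $\phi_{\ftype}(\calG)\ge\phi_{\ftype}(\calI)$ more carefully than the paper does (via the vote-leading type being preserved under a type-$\ftype$ increment), which is a welcome addition.
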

\noindent We prove the claim after finishing the proof of the lemma.
The claim, together with the fact that $\calP_{\pparty \ftype}(\calI)< \calP_{\pparty \ftype}(\calG)$ and $\calP_{\qparty \mtype}(\calI)= \calP_{\qparty \mtype}(\calG)$, implies that 
\begin{equation}
\frac{\delta(x_{\pparty\ftype}(\calI))}{\calP_{\pparty\ftype}(\calI)}\cdot \frac{\delta(x_{\qparty\mtype}(\calI))}{\calP_{\qparty\mtype}(\calI)}
> \frac{\delta(x_{\pparty\ftype}(\calG)+1)}{\calP_{\pparty\ftype}(\calG)}\cdot \frac{\delta(x_{\qparty\mtype}(\calG)+1)}{\calP_{\qparty\mtype}(\calG)}\label{monotone1}
\end{equation}
By Theorem~\ref{thm:cycles}, together with parts $(c)$ and $(b)$ of the claim, and the fact that $\calP_{\qparty \ftype}(\calI)= \calP_{\qparty \ftype}(\calG)$ and $\calP_{\pparty \mtype}(\calI)= \calP_{\pparty \mtype}(\calG)$, we can lower bound the right hand side of the above inequality as follows,
\begin{align*}
\frac{\delta(x_{\pparty\ftype}(\calG)+1)}{\calP_{\pparty\ftype}(\calG)}\cdot \frac{\delta(x_{\qparty\mtype}(\calG)+1)}{\calP_{\qparty\mtype}(\calG)}&\ge \frac{\delta(x_{\qparty\ftype}(\calG))}{\calP_{\qparty\ftype}(\calG)}\cdot \frac{\delta(x_{\pparty\mtype}(\calG))}{\calP_{\pparty\mtype}(\calG)}\\
&\ge \frac{\delta(x_{\qparty\ftype}(\calI)+1)}{\calP_{\qparty\ftype}(\calI)}\cdot \frac{\delta(x_{\pparty\mtype}(\calI)+1)}{\calP_{\pparty\mtype}(\calI)}\ge \frac{\delta(x_{\pparty\ftype}(\calI))}{\calP_{\pparty\ftype}(\calI)}\cdot \frac{\delta(x_{\qparty\mtype}(\calI))}{\calP_{\qparty\mtype}(\calI)},
\end{align*}
which contradicts inequality (\ref{monotone1}).
We recall that the first and third inequality in the above chain follow by applying Theorem~\ref{thm:cycles}.
Therefore property \ref{axiom4} is satisfied.
We now prove Claim \ref{claim:cycle}.
Since $\calJ'_{\pparty}\ge \calJ_{\pparty}$ we have  $x_{\pparty\ftype}(\calG)+x_{\pparty\mtype}(\calG)\ge x_{\pparty\ftype}(\calI)+x_{\pparty\mtype}(\calI)$.
Since $x_{\pparty \ftype}(\calG)\le x_{\pparty \ftype}(\calI)-1$, we conclude that $x_{\pparty \mtype}(\calG)-1\ge x_{\pparty \mtype}(\calI)$, that proves $(b)$.
Since the number of seats to allocate in both $\calI$ and $\calG$ is the same, we have that $\phi_{\ftype}(\calG)\ge \phi_{\ftype}(\calI)$.
Therefore, we have
\[x_{\pparty\ftype}(\calG)+\sum_{i\ne \pparty}x_{i\ftype}(\calG)=\phi_{\ftype}(\calG)\ge \phi_{\ftype}(\calI)\ge x_{\pparty\ftype}(\calI)+\sum_{i\ne \pparty}x_{i\ftype}(\calI)>x_{\pparty\ftype}(\calG)+\sum_{i\ne \pparty}x_{i\ftype}(\calI),\]
which implies the existence of a party $\qparty\ne \pparty$ such that $x_{\qparty\ftype}(\calG)>x_{\qparty\ftype}(\calI)$.
On the other hand, we have that $\calJ'_{\qparty}\le \calJ_{\qparty}$ and 
therefore $x_{\qparty\ftype}(\calG)+x_{\qparty\mtype}(\calG)\le x_{\qparty\ftype}(\calI)+x_{\qparty\mtype}(\calI)$.
Since $x_{\qparty\ftype}(\calG)>x_{\qparty\ftype}(\calI)$ we conclude that $x_{\qparty\mtype}(\calG)< x_{\qparty\mtype}(\calI)$. 
The integrality of both $x(\calI)$ and $x(\calG )$ implies $(c)$ and $(d)$.
\end{proof}

\section{The Fair Share as Benchmark}
\label{sec:fairness}

Recall that the goal in the biproportional setting is to achieve proportionality in both dimensions, while at the same time finding an integral solution.
If one relaxes the integrality condition, then such a solution is known as {\it fair share} or {\it matrix scaling}, an object that has been studied extensively in the optimization, statistics and algorithms communities.
This solution is also used as benchmark in order to evaluate the proportionality obtained by two-dimensional apportionment methods \cite{maier2010divisor}.
In what follows we restrict attention to the case in which the entries of the supply matrix $\calS$ of a two-dimensional instance $(\calP,\calS,\calJ,\phi)$ are all equal to $\phi_{\ftype}+\phi_{\mtype}$.
In this case, condition (\ref{biprop4}) becomes redundant and we just refer to $(\calP,\calJ,\phi)$ as a two-dimensional instance.

\begin{definition}
	\label{def:fairshare}
	Let $(\calP,\calJ,\phi)$ be a two-dimensional instance.
	We say that a strictly positive matrix $\calF$ of dimensions $[n]\times \{\ftype,\mtype\}$ is a {\it fair share} of $(\calP,\calJ,\phi)$ if there exists a strictly positive real vector $\lambda\in \RR_+^{n}$ and $\mu_{\ftype},\mu_{\mtype}\in \RR_+$ such that for each $i\in [n]$ and each $t\in \{\ftype,\mtype\}$ the following holds:
	\begin{align}
		\calF_{it}&= \calP_{it}\lambda_i \mu_t,\label{fs1}\\
		\calF_{i\ftype}+\calF_{i\mtype}&=\calJ_i,\label{fs2}\\
		\sum_{i\in [n]}\calF_{it}&=\phi_t.\label{fs3}
	\end{align}
	We say that $(\calF,\lambda,\mu)$ is the fair share tuple for the two-dimensional instance $(\calP,\calJ,\phi)$.
\end{definition}
Conditions (\ref{fs2}) and (\ref{fs3}) guarantee that the fair share satisfies the marginals, while condition (\ref{fs1}) ensures the proportionality of the solutions according to both dimensions.
We start by describing the convex optimization program that determines the value of the fair share.
Let $(\calP,\calJ,\phi)$ be a two-dimensional instance with marginals $\calJ$ and $\phi$ and strictly positive $\calP$.
Consider the following strictly convex optimization problem,
\begin{align}
\text{minimize} \quad\sum_{i\in [n]} y_{i\ftype}\Big(\log&\left(y_{i\ftype}/\calP_{i\ftype}\right)-1\Big)+\sum_{i\in [n]}y_{i\mtype}\Big(\log\left(y_{i\mtype}/\calP_{i\mtype}\right)-1\Big)\label{cvx-flow-obj}\\
\text{subject to} \quad\quad  y_{i\ftype}+y_{i\mtype}&=\calJ_i\quad \quad\hspace{-3pt}\text{ for every }i\in [n],\label{cvx-flow1}\\
\sum_{i\in [n]}y_{it}&=\phi_t\quad\;\;\; \text{ for each }t\in \{\ftype,\mtype\}\label{cvx-flow2},\\
			\quad\quad y_{it}&\ge 0 \quad\quad\;\text{ for every }i\in [n] \text{ and each }t\in \{\ftype,\mtype\}.\label{cvx-flow3}	
\end{align}
Constraints (\ref{cvx-flow1}) and (\ref{cvx-flow2}) enforces every solution to satisfy the party and type marginals respectively.
Given a vector $\omega$, we denote by $\exp(\omega)$ the vector obtained by applying the exponential to each of the entries of $\omega$.
\begin{proposition}
	\label{lem:fair-share-duality}
	Let $(\calP,\calJ,\phi)$ be a two-dimensional instance with $\calP$ strictly positive.
	Then, there exists a unique optimal primal dual solution of (\ref{cvx-flow-obj})-(\ref{cvx-flow3}).
	Furthermore, $(\calF,\exp(\Lambda),\exp(\calU))$ is a fair share of $(\calP,\calJ,\phi)$ if and only if $(\calF,\Lambda,\calU)$ is an optimal primal dual solution of (\ref{cvx-flow-obj})-(\ref{cvx-flow3}).
\end{proposition}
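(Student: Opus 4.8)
The plan is to treat (\ref{cvx-flow-obj})--(\ref{cvx-flow3}) as a standard entropy-type convex program and read off the fair share conditions from its Karush--Kuhn--Tucker (KKT) system. First I would argue existence and uniqueness of the optimal primal solution: the feasible region defined by (\ref{cvx-flow1})--(\ref{cvx-flow3}) is a nonempty (since $\sum_i \calJ_i = \phi_\ftype+\phi_\mtype$, a feasible point exists, e.g.\ splitting each $\calJ_i$ proportionally) compact polytope, and the objective is continuous on it and strictly convex on the relevant affine hull, so a unique minimizer $\calF$ exists. A subtlety: the objective has $y_{it}\log(y_{it}/\calP_{it})$ which tends to $0$ as $y_{it}\to 0^+$, so it extends continuously to the closed polytope; but I will want the optimum to be strictly positive. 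For that I would use the standard argument that the gradient blows up at the boundary: if some $\calF_{it}=0$, then along a feasible direction increasing $y_{it}$ the directional derivative of the objective is $-\infty$, contradicting optimality (here one uses that $\calP_{it}>0$ and that the polytope has a feasible point with $y_{it}>0$, which the supply/marginal structure guarantees). Hence $\calF$ is strictly positive.

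Next I would write the Lagrangian with multipliers $\Lambda_i$ for (\ref{cvx-flow1}) and $\calU_t$ for (\ref{cvx-flow2}) (the nonnegativity constraints (\ref{cvx-flow3}) are inactive at the strictly positive optimum, so their multipliers vanish by complementary slackness). Stationarity in $y_{it}$ gives $\log(y_{it}/\calP_{it}) = \Lambda_i + \calU_t$, i.e.\ $\calF_{it} = \calP_{it}\,e^{\Lambda_i}\,e^{\calU_t}$, which is precisely (\ref{fs1}) with $\lambda_i = e^{\Lambda_i}$ and $\mu_t = e^{\calU_t}$; and primal feasibility of $(\calF,\Lambda,\calU)$ is exactly (\ref{fs2})--(\ref{fs3}). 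Conversely, given a fair share tuple $(\calF,\lambda,\mu)$, setting $\Lambda_i=\log\lambda_i$, $\calU_t=\log\mu_t$ produces a point satisfying the KKT conditions of this convex program, which are sufficient for global optimality by convexity. This establishes the claimed equivalence between fair share tuples and optimal primal--dual solutions.

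Finally, for uniqueness of the primal--dual pair: primal uniqueness of $\calF$ follows from strict convexity as above. For dual uniqueness I would note that the stationarity equations $\log(\calF_{it}/\calP_{it}) = \Lambda_i + \calU_t$ determine $\Lambda_i + \calU_t$ uniquely for every $(i,t)$; since there are two types, fixing $i$ gives $\Lambda_i + \calU_\ftype$ and $\Lambda_i + \calU_\mtype$, hence $\calU_\ftype - \calU_\mtype$ is pinned down, and then ranging over $i$ all the $\Lambda_i$ are determined up to a single global additive constant shift $(\Lambda_i,\calU_t)\mapsto(\Lambda_i+\theta,\calU_t-\theta)$. The bipartite constraint graph on parties and types is connected (each party is linked to both types), so this is the only degree of freedom; one must check whether the proposition intends $(\Lambda,\calU)$ unique outright or unique up to this shift. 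If the former, I would observe that the normalization is fixed by the marginal equations: the shift changes $e^{\Lambda_i}$ by $e^\theta$ and $e^{\calU_t}$ by $e^{-\theta}$, leaving $\calF$ unchanged, so strictly speaking dual uniqueness holds only modulo this one-parameter family, and I would state the proposition's ``unique optimal primal dual solution'' with that understanding (or the paper may adopt a normalization such as $\calU_\mtype=0$). The main obstacle I anticipate is precisely this bookkeeping around the gauge freedom in $(\Lambda,\calU)$ together with the boundary/strict-positivity argument; the KKT derivation itself is routine once those two points are handled cleanly.
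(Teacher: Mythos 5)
Your proof takes essentially the same route as the paper's: the paper likewise treats (\ref{cvx-flow-obj})--(\ref{cvx-flow3}) as a strictly convex program satisfying Slater's condition and reads the fair-share identity $\calF_{it}=\calP_{it}\,e^{\Lambda_i}e^{\calU_t}$ off the KKT stationarity conditions. Your extra care is warranted, though: the paper's one-line proof neither justifies strict positivity of the optimum (your boundary-gradient argument is the right fix) nor addresses the gauge freedom $(\Lambda_i,\calU_t)\mapsto(\Lambda_i+\theta,\calU_t-\theta)$, so the claimed uniqueness of the optimal primal--dual pair indeed holds only for the primal and for the sums $\Lambda_i+\calU_t$, i.e.\ modulo that one-parameter shift, exactly as you observe.
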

\begin{proof}
	The optimization problem  (\ref{cvx-flow-obj})-(\ref{cvx-flow3}) is strictly convex and satisfies the Slater condition.
	Then, there exists a unique solution $\calF$ and it is strictly positive.	
	For a optimal primal dual pair $(\calF,\lambda,\mu)$, the KKT optimality conditions are equivalent to $\log(\calF_{it}/\calP_{it})-\Lambda_i-\calU_t=0$ for every $i\in [n]$ and $t\in \{\ftype,\mtype\}$, that is, $\calF_{it}=\calP_{it}\cdot \exp(\Lambda_i)\cdot \exp(\calU_t).$
\end{proof}	

\begin{proposition}
	\label{prop:already-fair}
	Let $(\calG,\calJ,\phi)$ be a two-dimensional instance with $\calG$ strictly positive and such that the following holds:
	$\sum_{i\in [n]}\calG_{it}=\phi_t$ for each $t\in \{\ftype,\mtype\}$ and
	$\calG_{i\ftype}+\calG_{i\mtype}=\calJ_i$ for each $i\in [n]$.
	Then, $\calG$ is the fair share of $(\alpha\calG,\calJ,\phi)$ for every positive real $\alpha$.
\end{proposition}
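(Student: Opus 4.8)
The plan is to exhibit multipliers showing that $\calG$ meets Definition~\ref{def:fairshare} for the scaled instance $(\alpha\calG,\calJ,\phi)$, and then to appeal to the uniqueness part of Proposition~\ref{lem:fair-share-duality} in order to upgrade ``a fair share'' to ``the fair share''.

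First I would check the two marginal conditions (\ref{fs2}) and (\ref{fs3}) with $\calF=\calG$: these are precisely the two standing hypotheses $\calG_{i\ftype}+\calG_{i\mtype}=\calJ_i$ and $\sum_{i\in[n]}\calG_{it}=\phi_t$. Note that they are independent of $\alpha$, so nothing has to be done here. Next I would verify the proportionality condition (\ref{fs1}) for the matrix $\alpha\calG$: it requires $\calG_{it}=(\alpha\calG_{it})\lambda_i\mu_t$ for all $i\in[n]$ and $t\in\{\ftype,\mtype\}$. Since $\calG$ is strictly positive, $\calG_{it}\neq 0$, so this reduces to the single scalar requirement $\alpha\lambda_i\mu_t=1$. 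Choosing $\lambda_i=1/\sqrt{\alpha}$ for every $i\in[n]$ and $\mu_{\ftype}=\mu_{\mtype}=1/\sqrt{\alpha}$ satisfies it, and these numbers are strictly positive because $\alpha>0$. Hence $\calG$, together with these multipliers, is a fair share of $(\alpha\calG,\calJ,\phi)$ in the sense of Definition~\ref{def:fairshare}.

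Finally, Proposition~\ref{lem:fair-share-duality} states that the strictly convex program (\ref{cvx-flow-obj})--(\ref{cvx-flow3}) has a unique optimal primal solution and that the fair shares of $(\alpha\calG,\calJ,\phi)$ are exactly these optimal solutions; hence the fair share is unique, and by the previous paragraph it equals $\calG$. I do not anticipate any genuine obstacle here: the only subtlety is the use of strict positivity of $\calG$ to cancel $\calG_{it}$ in (\ref{fs1}), together with the observation that the same $\alpha$-free computation works uniformly for every $\alpha>0$. This is the analogue, at the level of fixed one-dimensional marginals, of the scaling identity already recorded for biproportional solutions in Lemma~\ref{lem:technical-biprop}\ref{opt:scaling}.
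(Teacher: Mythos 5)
Your proposal is correct and follows essentially the same route as the paper: it exhibits the multipliers $\lambda_i=\mu_t=1/\sqrt{\alpha}$ to verify conditions (\ref{fs1})--(\ref{fs3}) and then invokes the uniqueness of the fair share (Proposition~\ref{lem:fair-share-duality}) to conclude. The only difference is that you spell out the role of strict positivity of $\calG$ and the reduction to $\alpha\lambda_i\mu_t=1$ more explicitly, which the paper leaves implicit.
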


\begin{proof}
	For every $i\in [n]$ define $\lambda_i=1/\sqrt{\alpha}$ and for each $t\in \{\ftype,\mtype\}$ define $\mu_t=1/\sqrt{\alpha}$.
	For every $i\in [n]$ and for each $t\in \{\ftype,\mtype\}$ we have that $\calG_{it}=(\alpha\calG_{it})\lambda_i\mu_t$, and therefore $\calG$ satisfies conditions (\ref{fs1})-(\ref{fs3}) defining the fair share of $\calG$.
	The uniqueness of the fair share implies that $\calG$ is the fair share of $(\alpha\calG,\calJ,\phi)$.
\end{proof}

\subsection{Positive Result for two-dimensional Instances with Two Rows and Two Columns}
\label{sec:two-rows}

Consider the particular case of two-dimensional instances of $2\times 2$.
In the following result we show that for this case the biproportional solution does not violate the bounds given by rounding (up or down) the fair share.

\begin{theorem}
	\label{thm:two-rows}
	For every two-dimensional instance $(\calP,\calJ,\phi)$ where $\calP$ is a strictly positive matrix of $2\times 2$, and for every signpost sequence $\delta$, we have $\lfloor \calF_{it} \rfloor \le x_{it}\le \lceil \calF_{it} \rceil$ for every $x\in \calB_{\delta}(\calP,\calJ,\phi)$, where $\calF$ is the fair share of $(\calP,\calJ,\phi)$.
\end{theorem}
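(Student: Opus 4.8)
The plan is to reduce the two-sided bound to a single matrix entry and then pit the biproportionality characterization of Theorem~\ref{thm:cycles} against the multiplicative identity defining the fair share. The first observation is that in a $2\times 2$ matrix with fixed row marginals $\calJ$ and column marginals $\phi$ every entry is an affine function, with \emph{integer} coefficients, of the top-left entry: with $a=x_{1\ftype}$ one has $x_{1\mtype}=\calJ_1-a$, $x_{2\ftype}=\phi_{\ftype}-a$, $x_{2\mtype}=\calJ_2-\phi_{\ftype}+a$, and the fair share $\calF$ (unique by Proposition~\ref{lem:fair-share-duality}) is described the same way in terms of $f=\calF_{1\ftype}$. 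Since $\calJ$ and $\phi$ are integral, $\lfloor\cdot\rfloor$ and $\lceil\cdot\rceil$ commute with these shifts, so all four inequalities $\lfloor\calF_{it}\rfloor\le x_{it}\le\lceil\calF_{it}\rceil$ reduce to the single statement $\lfloor f\rfloor\le a\le\lceil f\rceil$. Thus it suffices to prove $a\le\lceil f\rceil$: running the same argument with the two columns exchanged gives $x_{1\mtype}\le\lceil\calF_{1\mtype}\rceil$, and the lower bounds (and the remaining cells) follow from the integer affine relations.

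For the upper bound I would argue by contradiction. Assume $a\ge\lceil f\rceil+1$, so $a-f\ge 1$; then $x_{2\mtype}-\calF_{2\mtype}=a-f\ge 1$ and $\calF_{1\mtype}-x_{1\mtype}=\calF_{2\ftype}-x_{2\ftype}=a-f\ge 1$. The signpost range $n-1\le\delta(n)\le n$ (property~\ref{divisor2}) then gives $\delta(x_{1\ftype})\ge x_{1\ftype}-1\ge\calF_{1\ftype}$, $\delta(x_{2\mtype})\ge x_{2\mtype}-1\ge\calF_{2\mtype}$, $\delta(x_{1\mtype}+1)\le x_{1\mtype}+1\le\calF_{1\mtype}$ and $\delta(x_{2\ftype}+1)\le x_{2\ftype}+1\le\calF_{2\ftype}$. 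Plugging these four estimates into the first inequality of Theorem~\ref{thm:cycles} with $i=1,j=2$ produces a chain
\[\frac{\calF_{1\ftype}\calF_{2\mtype}}{\calP_{1\ftype}\calP_{2\mtype}}\ \le\ \frac{\delta(x_{1\ftype})\delta(x_{2\mtype})}{\calP_{1\ftype}\calP_{2\mtype}}\ \le\ \frac{\delta(x_{1\mtype}+1)\delta(x_{2\ftype}+1)}{\calP_{1\mtype}\calP_{2\ftype}}\ \le\ \frac{\calF_{1\mtype}\calF_{2\ftype}}{\calP_{1\mtype}\calP_{2\ftype}},\]
and the fair-share identity $\calF_{it}=\calP_{it}\lambda_i\mu_t$ (Definition~\ref{def:fairshare}) makes the first and last terms equal (both are $\lambda_1\lambda_2\mu_{\ftype}\mu_{\mtype}$). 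Hence the chain collapses to equalities, which (the $\delta$-values being then forced positive, since $\calF$ is strictly positive) pins down $\delta(x_{1\ftype})=x_{1\ftype}-1=\calF_{1\ftype}$ and $\delta(x_{1\mtype}+1)=x_{1\mtype}+1=\calF_{1\mtype}$, together with the analogous identities in the second row. In particular $\calF$ is an integer matrix, and, being strictly positive, $x_{1\ftype}=\calF_{1\ftype}+1\ge 2$. Now $\delta(x_{1\ftype})=x_{1\ftype}-1$ with $x_{1\ftype}\ge 2$ activates the first clause of the disjunction property~\ref{divisor3}, forcing $\delta(\ell)<\ell$ for every $\ell\ge 1$, and this contradicts $\delta(x_{1\mtype}+1)=x_{1\mtype}+1$ (note $x_{1\mtype}+1\ge 1$). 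So $a\le\lceil f\rceil$, as needed.

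The reduction step and the signpost-range estimates are routine. The step I expect to be the real obstacle is the boundary case in which the fair share is \emph{already integral}: there a unit perturbation of $\calF$ in every cell is compatible with the range bound $n-1\le\delta(n)\le n$, so that bound alone cannot separate $x$ from $\calF$, and one genuinely needs the disjunction property~\ref{divisor3} of signpost sequences — the same property behind the exactness proofs earlier in the paper — to exclude it. The delicate point is to set up the chain of inequalities so that tightness of the fair-share identity \emph{forces} this integral configuration, from which property~\ref{divisor3} then delivers the contradiction.
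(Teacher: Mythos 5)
Your proof is correct and follows essentially the same route as the paper's: assume a one-unit violation, feed the signpost range bounds into the cycle inequalities of Theorem~\ref{thm:cycles}, use the fair-share identity $\calF_{it}=\calP_{it}\lambda_i\mu_t$ to collapse the resulting chain to equalities, and invoke the disjunction property~\ref{divisor3} to rule out the integral boundary case. The only differences are cosmetic (you reduce the four bounds to the single entry $x_{1\ftype}$ via the integer affine relations and column symmetry, whereas the paper tracks all four entries directly, and your equality analysis makes the integrality of $\calF$ emerge rather than case-splitting on it), so the argument is sound as written.
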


\begin{proof}[Proof of Theorem~\ref{thm:two-rows}]
Let $x\in \calB_{\delta}(\calP,\calJ,\phi)$ and suppose that $x_{1\ftype}\ge \lceil\calF_{1\ftype}\rceil+1$.
Since both $x$ and $\calF$ have the same row and column marginals, 
we have that $x_{2\ftype}\le \lfloor \calF_{2\ftype}\rfloor-1$, $x_{1\mtype}\le \lfloor\calF_{1\mtype}\rfloor-1$ and $x_{2\mtype}\ge \lceil \calF_{2\mtype}\rceil+1$.
In particular, $\calF_{1\mtype}\ge 1$ and $\calF_{2\ftype}\ge 1$.
Let $(\calF,\lambda,\mu)$ be the fair share tuple and consider $\Phi=\lambda_1\lambda_2\mu_{\ftype}\mu_{\mtype}$.
Therefore, and since $\delta$ is a signpost sequence, we have that 
\begin{align*}
\frac{\delta(x_{1\ftype})}{\calP_{1\ftype}}\cdot \frac{\delta(x_{2\mtype})}{\calP_{2\mtype}}=\Phi\cdot \frac{\delta(x_{1\ftype})}{\calF_{1\ftype}}\cdot \frac{\delta(x_{2\mtype})}{\calF_{2\mtype}}&\ge \Phi\cdot \frac{\delta(\lceil\calF_{1\ftype}\rceil+1)}{\calF_{1\ftype}}\cdot \frac{\delta(\lceil \calF_{2\mtype}\rceil+1)}{\calF_{2\mtype}}\ge \Phi\cdot \frac{\lceil\calF_{1\ftype}\rceil}{\calF_{1\ftype}}\cdot \frac{\lceil \calF_{2\mtype}\rceil}{\calF_{2\mtype}}\ge \Phi.
\end{align*}
On the other hand, we have that 
\begin{align*}
\frac{\delta(x_{1\mtype}+1)}{\calP_{1\mtype}}\cdot \frac{\delta(x_{2\ftype}+1)}{\calP_{2\ftype}}&=\Phi\cdot \frac{\delta(x_{1\mtype}+1)}{\calF_{1\mtype}}\cdot \frac{\delta(x_{2\ftype}+1)}{\calF_{2\ftype}}\\
&\le \Phi\cdot \frac{\delta(\lfloor\calF_{1\mtype}\rfloor)}{\calF_{1\mtype}}\cdot \frac{\delta(\lfloor\calF_{2\ftype}\rfloor)}{\calF_{2\ftype}}\le \frac{\lfloor\calF_{1\mtype}\rfloor}{\calF_{1\mtype}}\cdot \frac{\lfloor\calF_{2\ftype}\rfloor}{\calF_{2\ftype}}
\le \Phi.
\end{align*}
If any of the entries of the fair share $\calF$ is fractional, then at least one of the inequalities above is strict, and threfore this contradicts the first set of inequalities in Theorem~\ref{thm:cycles}.
Otherwise, suppose that $\calF$ is integral.
In this case, and by Theorem~\ref{thm:cycles}, all the above inequalities are satisfied with equality, from where we get that 
$\delta(\lceil \calF_{1\ftype}\rceil+1)=\lceil \calF_{1\ftype}\rceil$ and $\delta(\lfloor \calF_{1\mtype}\rfloor-1)=\lfloor \calF_{1\mtype}\rfloor$, but this contradicts the disjunction property \ref{divisor3} satisfied by the signpost sequence $\delta$.
We conclude that $x_{1\ftype}\le \lceil \calF_{1\ftype}\rceil$.
By an analogous reasoning we show that when $x_{1\ftype}\le \lceil\calF_{1\ftype}\rceil-1$ the second set of inequalities in Theorem~\ref{thm:cycles} is contradicted.
Therefore, we have that $\lfloor \calF_{1\ftype}\rfloor\le x_{1\ftype}\le \lceil \calF_{1\ftype}\rceil$.
Since both $x$ and $\calF$ have the same marginal for $\ftype$, we conclude that $\lfloor \calF_{2\ftype}\rfloor\le x_{2\ftype}\le \lceil \calF_{2\ftype}\rceil$, and since both $x$ and $\calF$ have the same row marginals we conclude that $\lfloor \calF_{i\mtype}\rfloor\le x_{i\mtype}\le \lceil \calF_{i\mtype}\rceil$ for each $i\in \{1,2\}$.
\end{proof}

\subsection{Negative Results for two-dimensional Instances}
\label{sec:biprop-no-breach}
We show in our next result that the distance between the biproportional solution and the fair share can be arbitrarily high, in the following precise sense.

\begin{theorem}
	\label{thm:how-much-big}
	For every positive integer $\ell$ and every signpost sequence $\delta$, there exists a two-dimensional instance $\calD_{\ell,\delta}$ such that the following holds:
	There exists a unique $\delta$-biproportional solution $y$ of $\calD_{\ell,\delta}$, such that $y_{1\ftype}\ge \calF_{1t}+\ell$ and $y_{1\mtype}\le \calF_{1\mtype}-\ell$, where $\calF$ is the fair share of $\calD_{\ell,\delta}$.
	In particular, $\|y-\calF\|_1=\Omega(\ell)$.
\end{theorem}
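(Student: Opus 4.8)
The plan is to construct, for each $\ell$ and each signpost sequence $\delta$, an explicit $2\times n$ instance (with $n$ depending on $\ell$) in which a single ``big'' row---row $1$---is forced by the integrality of the biproportional method to absorb a large deviation from its fair share, while every other row behaves essentially proportionally. The guiding intuition is the classical one-dimensional phenomenon that a divisor method can deviate from the rounded quota by an amount that grows with the number of parties; here we want to convert that into a two-dimensional statement by making the column constraints ``transmit'' the accumulated rounding of many small rows onto row $1$.

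\medskip

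\textbf{Step 1: Design the column marginals so that the fair share of row 1 is near a half-integer.} I would choose $\calJ_1$ and $\phi$ so that the fair share satisfies $\calF_{1\ftype}=\calF_{1\mtype}+\epsilon$ for a small $\epsilon$; the precise target is that $\calF_{1\ftype}$ lies just above $\delta(m)$ for a suitable integer $m$ (so any biproportional solution would ``want'' $x_{1\ftype}\le m$), yet the pressure from the other rows forces $x_{1\ftype}$ to be at least $m+\ell$. Concretely, I expect to use $n-1$ identical small rows, each with votes chosen so that their individual fair share in column $\ftype$ is an integer plus a tiny sliver $\eta$; summing $n-1$ such slivers gives a deficit of $(n-1)\eta$ in column $\ftype$ relative to what the rounded small rows can supply, and this deficit must be made up by row $1$. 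Setting $n-1$ large enough relative to $\ell$ and the modulus of continuity of $\delta$ near the relevant integers turns this into a deviation of at least $\ell$ in row $1$.

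\medskip

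\textbf{Step 2: Pin down the fair share explicitly via Proposition~\ref{lem:fair-share-duality} and Proposition~\ref{prop:already-fair}.} Rather than solving the scaling equations from scratch, I would start from a target strictly-positive matrix $\calG$ that \emph{already} satisfies the marginal conditions $\sum_i \calG_{it}=\phi_t$ and $\calG_{i\ftype}+\calG_{i\mtype}=\calJ_i$, and then set $\calP=\calG$ (or any positive scaling of $\calG$, using Proposition~\ref{prop:already-fair}). By uniqueness of the fair share, $\calF=\calG$, so I have complete control over the fractional parts of $\calF_{it}$. The rows of $\calG$ are then perturbed---multiplying the entries of the small rows by factors $e^{\Lambda_i}e^{\calU_t}$---to create a genuine instance $\calP$ whose fair share is still $\calG$ but whose \emph{biproportional} solution is forced away from it.

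\medskip

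\textbf{Step 3: Force uniqueness and the claimed inequality for the biproportional solution.} Here I would invoke Theorem~\ref{thm:cycles}: to show $y$ is the unique $\delta$-biproportional solution it suffices to exhibit multipliers $\lambda,\mu$ and an integral matrix $y$ satisfying the biproportionality relations \eqref{biprop1}--\eqref{biprop3} with all the cycle inequalities of Theorem~\ref{thm:cycles} \emph{strict}. I would pick $y$ with $y_{1\ftype}=m+\ell$, $y_{1\mtype}=\calJ_1-(m+\ell)$, and the small rows split as evenly as their own signpost rounding dictates; then choose $\lambda_i,\mu_t$ so that each active ratio $\delta(y_{it})/\calP_{it}$ equals the common scaling value, with strict inequalities everywhere else. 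The arithmetic amounts to checking that $\delta(m+\ell)$ and $\delta(m+\ell+1)$ straddle the right product of multipliers while $\delta(\text{small row values})$ do too; this is where the design of Step~1 pays off. Finally $\|y-\calF\|_1\ge |y_{1\ftype}-\calF_{1\ftype}|+|y_{1\mtype}-\calF_{1\mtype}|\ge 2\ell - O(1)=\Omega(\ell)$ gives the last assertion.

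\medskip

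\textbf{Main obstacle.} The delicate point is Step~1/Step~3 together: I must simultaneously (i) keep the fair share of row $1$ \emph{strictly below} the integer $m+\ell$ (indeed below $m+1$), so that the fair-share rounding ``points down,'' while (ii) making the integral column constraint \eqref{biprop3} \emph{unavoidably} push $y_{1\ftype}$ up to $m+\ell$, and (iii) ensuring uniqueness via strict cycle inequalities, which constrains how much slack I have in choosing the small-row votes. Balancing these three requires quantifying how the small rows' rounding errors aggregate---essentially a pigeonhole argument across $\Theta(\ell)$ small rows---and checking it is compatible with the monotonicity and disjunction properties \ref{divisor2}--\ref{divisor3} of $\delta$ for an \emph{arbitrary} signpost sequence, including degenerate ones like Adams ($\delta(k)=k-1$) or Jefferson ($\delta(k)=k$). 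I expect the cleanest route is to treat those boundary signpost sequences by a direct tailored construction and the ``generic'' ones (where $\delta(k)\in(k-1,k)$ for all relevant $k$) by the perturbation argument above.
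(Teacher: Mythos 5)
Your plan is essentially the paper's own construction: take a vote matrix that already satisfies the marginals (so by Proposition~\ref{prop:already-fair} it equals its fair share), give it one big row plus many identical small rows whose column-$\ftype$ entries are tiny slivers summing to $\ell$, and use Theorem~\ref{thm:cycles} with strict inequalities to show the unique biproportional solution rounds every small row to zero in that column and dumps the entire deficit of $\ell$ onto row~1; the paper likewise handles $\delta(1)=0$ (Adams-type) by a separate tailored instance, exactly as you anticipate. The approach is correct and matches the paper's proof in all essential respects.
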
 
That is, for every integer value $\ell$ and every signpost sequence $\delta$, we can find an instance where the allocation for the first row in the biproportional solution differs by $\ell$ from the fair share in each of its entries.
To prove Theorem~\ref{thm:how-much-big} we need the following simple proposition.
\begin{proposition}
	\label{prop:existence1}
	Let $\delta$ be a signpost sequence such that $\delta(1)>0$ and let $\ell$ be a positive integer. 
	Consider the function $\Gamma_{\ell,\delta}:\RR_+\to \RR$ given by $\Gamma_{\ell,\delta}(y)=\delta(7+\ell)\cdot \delta(3)/(21-7y)-\delta(1)^2/(\ell y).$
	Then, there exists a positive integer number $n_{\ell,\delta}$ such that $\Gamma_{\ell,\delta}(\ell/n_{\ell,\delta})<0$. 
\end{proposition}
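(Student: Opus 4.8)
The plan is to analyze the function $\Gamma_{\ell,\delta}(y)=\delta(7+\ell)\delta(3)/(21-7y)-\delta(1)^2/(\ell y)$ in the regime $y=\ell/n$ for large integer $n$, i.e.\ as $y\to 0^{+}$. First I would note that, since $\delta$ is a signpost sequence with $\delta(1)>0$, all the quantities $\delta(1),\delta(3),\delta(7+\ell)$ are strictly positive constants (independent of $y$), and by property \ref{divisor2} they are finite; in particular $\delta(1)^2>0$ and $\delta(7+\ell)\delta(3)>0$. The strategy is then a simple limit comparison: as $y\to 0^{+}$, the first term $\delta(7+\ell)\delta(3)/(21-7y)$ tends to the finite positive constant $\delta(7+\ell)\delta(3)/21$, whereas the second term $\delta(1)^2/(\ell y)$ tends to $+\infty$ because $\ell y\to 0^{+}$ and the numerator is a fixed positive constant. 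Hence $\Gamma_{\ell,\delta}(y)\to -\infty$ as $y\to 0^{+}$.

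Concretely, I would pick a threshold that makes both terms easy to bound. Choose any integer $n_{\ell,\delta}$ large enough that $y:=\ell/n_{\ell,\delta}$ satisfies two things: (i) $0<7y<21$ (so the first denominator stays positive and bounded, say $21-7y\ge 1$), which holds as soon as $n_{\ell,\delta}>7\ell/20$; and (ii) $\delta(1)^2/(\ell y)=\delta(1)^2 n_{\ell,\delta}/\ell^2 > \delta(7+\ell)\delta(3)$, which holds as soon as $n_{\ell,\delta}> \ell^2\delta(7+\ell)\delta(3)/\delta(1)^2$. Taking $n_{\ell,\delta}$ strictly larger than the maximum of these two explicit bounds (and at least $1$), we get, for this choice of $y$,
\[
\Gamma_{\ell,\delta}(\ell/n_{\ell,\delta})=\frac{\delta(7+\ell)\delta(3)}{21-7y}-\frac{\delta(1)^2}{\ell y}\le \delta(7+\ell)\delta(3)-\frac{\delta(1)^2 n_{\ell,\delta}}{\ell^2}<0,
\]
where the first inequality uses $21-7y\ge 1$ together with positivity of the numerator, and the second uses condition (ii). This establishes the claim.

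There is essentially no obstacle here: the only thing to be mildly careful about is that the denominator $21-7y$ stays positive (otherwise the sign of the first term flips), which is why I impose $n_{\ell,\delta}>7\ell/20$; and that $\delta(1)>0$ is genuinely used, since if $\delta(1)=0$ the second term would vanish and the statement would fail. Everything else is a one-line limit argument, and the existence of a suitable integer $n_{\ell,\delta}$ follows from the Archimedean property of the reals.
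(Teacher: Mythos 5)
Your proof is correct and follows essentially the same route as the paper: observe that as $y\to 0^{+}$ the first term stays bounded while $-\delta(1)^2/(\ell y)\to-\infty$, so a large enough integer $n_{\ell,\delta}$ works. The only difference is that you make the bound explicit (requiring $n_{\ell,\delta}>\max\{7\ell/20,\ \ell^2\delta(7+\ell)\delta(3)/\delta(1)^2\}$), whereas the paper simply invokes continuity and the limit; both are fine.
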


\begin{proof}
	The function $\Gamma_{\ell,\delta}$ is continuous in the interval $(0,1)$ and since $\delta(1)>0$ we have $\Gamma_{\ell,\delta}(y)\to -\infty$ when $y\to 0$.
	Therefore, there exists a large enough positive integer number $n_{\ell,\delta}$ such that $\Gamma_{\ell,\delta}(\ell/n_{\ell,\delta})<0$.
\end{proof}

\begin{proof}[Proof of Theorem~\ref{thm:how-much-big}]
	Let $\delta$ be a signpost sequence such that $\delta(1)>0$ and let $\ell$ be a positive integer.
	Let $n_{\ell,\delta}$ be the integer number guaranteed to exist by Proposition~\ref{prop:existence1} and let $y_{\ell,\delta}=\ell/n_{\ell,\delta}$.
	Consider the matrix $\calP$ with $n_{\ell,\delta}+1$ rows and two columns defined as follows: $\calP_{1\ftype}=7$ and $\calP_{1\mtype}=\ell$; $\calP_{i\ftype}=y_{\ell}$ and $\calP_{i\mtype}=3-y_{\ell}$ for every $i\in \{2,3,\ldots,n_{\ell,\delta}+1\}$.
	We define the row marginals $\calJ$ such that $\calJ_1=7+\ell$ and $\calJ_i=3$ for every $i\in \{2,3,\ldots,n_{\ell,\delta}+1\}$, and the type marginals $\phi$ such that $\phi_{\ftype}=7+n_{\ell,\delta}y_{\ell,\delta}=7+\ell$ and $\phi_{\mtype}=\ell+n_{\ell,\delta}(3-y_{\ell,\delta})=3n_{\ell,\delta}$.
	
	By construction the type marginals are integral.
	Consider the instance $\calD_{\ell,\delta}=(n_{\ell,\delta} \calP,\calJ,\phi)$.
	Since the matrix $\calP$ is such that $\sum_{i\in [n_{\ell,\delta}+1]}\calP_{it}=\phi_t$ for each $t\in \{\ftype,\mtype\}$ and $\calP_{i\ftype}+\calP_{i\mtype}=\calJ_i$ for each $i\in [n_{\ell,\delta}+1]$, by Proposition~\ref{prop:already-fair} we have that $\calP$ is the fair share of $(n_{\ell,\delta}\calP,\calJ,\phi)$.
	Furthermore, by Lemma \ref{lem:technical-biprop} \ref{opt:scaling} we have that $\calB_{\delta}(n_{\ell,\delta} \calP,\calJ,\phi)=\calB_{\delta}(\calP,\calJ,\phi)$.
	Therefore, it is sufficient to compare the biproportional solutions of $(\calP,\calJ,\phi)$ with respect to $\calP$.
	\begin{figure}[H]
		$$\calP=\begin{pmatrix} 7 & \ell\\ y_{\ell,\delta} & 3-y_{\ell,\delta}\\ \vdots & \vdots \\y_{\ell,\delta} & 3-y_{\ell,\delta} \end{pmatrix}\quad \quad \calJ=\begin{pmatrix} 7+\ell \\ 3\\ \vdots\\ 3\end{pmatrix}\quad \quad \phi=\begin{pmatrix}7+\ell\\ 3n_{\ell,\delta} \end{pmatrix}\quad \quad x=\begin{pmatrix} 7+\ell & 0\\ 0 & 3\\ \vdots & \vdots \\0 & 3 \end{pmatrix}$$
		\caption{Two-dimensional instance and its biproportional solution when $\delta(1)>0$.}
	\end{figure}
	\noindent Consider the matrix $x$ defined as follows: $x_{1\ftype}=7+\ell$, $x_{i\mtype}=3$ for each $i\in \{2,3,\cdots,n_{\ell,\delta}+1\}$ and the rest of the entries are equal to zero.
	We verify next in what follows that $x\in \calB_{\delta}(\calP,\calJ,\phi)$, and furthermore, $x$ is unique.
	The matrix $x$ satisfies, by construction, the row and column marginals.
	Since the rows in $\{2,3,n_{\ell,\delta}+1\}$ of $\calP$ are all equal, and the same holds for $x$, it is enough to show that the inequalities of Theorem~\ref{thm:cycles} are satisfied for the rows one and two.
	Observe that
	\begin{align*}
		\frac{\delta(x_{1\ftype})}{\calP_{1\ftype}}\cdot \frac{\delta(x_{2\mtype})}{\calP_{2\mtype}}- \frac{\delta(x_{1\mtype}+1)}{\calP_{1\mtype}}\cdot \frac{\delta(x_{2\ftype}+1)}{\calP_{2\ftype}}&=\frac{\delta(7+\ell)}{7}\cdot \frac{\delta(3)}{3-y_{\ell,\delta}}-\frac{\delta(1)^2}{\ell y_{\ell,\delta}}=\Gamma_{\ell,\delta}(\ell/n_{\ell,\delta})<0,
	\end{align*}
	and the other set of inequalities is immediately satisfied strictly since $x_{2\ftype}=x_{1\mtype}=0$ and $\delta(0)=0$.
	By Theorem~\ref{thm:cycles} we conclude that $x\in \calB_{\delta}(\calP,\calJ,\phi)=\calB_{\delta}(\calD_{\ell,\delta})$ and since the inequalities are satisfied strictly we have that $x$ is the unique $\delta$-biproportional solution of the instance $\calD_{\ell,\delta}$.
	
	Suppose now that $\delta(1)=0$. 
	Consider the matrix $\calP$ with $\ell+2$ rows and two columns defined as follows: $\calP_{1\ftype}=\ell+1$ and $\calP_{1\mtype}=1$; $\calP_{i\ftype}=1/(\ell+1)$ and $\calP_{i\mtype}=3-1/(\ell+1)$ for every $i\in \{2,3,\ldots,\ell+2\}$.
	We define the row marginals $\calJ$ such that $\calJ_1=\ell+2$ and $\calJ_i=3$ for every $i\in \{2,3,\ldots,\ell+2\}$, and the type marginals $\phi$ such that $\phi_{\ftype}=\ell+2$ and $\phi_{\mtype}=3\ell+3$.
	By construction the type marginals are integral.
	Consider in this case the instance $\calD_{\ell,\delta}=((\ell+1)\calP,\calJ,\phi)$.
	Since the matrix $\calP$ is such that $\sum_{i\in [n_{\ell,\delta}+1]}\calP_{it}=\phi_t$ for each $t\in \{\ftype,\mtype\}$ and $\calP_{i\ftype}+\calP_{i\mtype}=\calJ_i$ for each $i\in [n_{\ell,\delta}+1]$, by Proposition~\ref{prop:already-fair} we have that $\calP$ is the fair share of $(n_{\ell,\delta}\calP,\calJ,\phi)$.
	Furthermore, by Lemma \ref{lem:technical-biprop} \ref{opt:scaling} we have that $\calB_{\delta}((\ell+1) \calP,\calJ,\phi)=\calB_{\delta}(\calP,\calJ,\phi)$.
	Therefore, it is sufficient to compare the biproportional solutions of $(\calP,\calJ,\phi)$ with respect to $\calP$.
	\begin{figure}[H]
		$$\calP=\begin{pmatrix} \ell+1 & 1 \\ 1/(\ell+1) & 3-1/(\ell+1) \\ \vdots & \vdots \\ 1/(\ell+1) & 3-1/(\ell+1)\end{pmatrix}\quad \quad \calJ=\begin{pmatrix} \ell+2 \\ 3\\ \vdots\\ 3\end{pmatrix}\quad \quad \phi=\begin{pmatrix}\ell+2\\ 3\ell+3 \end{pmatrix}\quad \quad x=\begin{pmatrix} \ell+1 & 1 \\ 1 & 2 \\ \vdots & \vdots \\ 1 & 2\end{pmatrix}$$
		\caption{Two-dimensional instance and its biproportional solution when $\delta(1)=0$.}
	\end{figure}
	\noindent Consider the matrix $x$ defined as follows: $x_{i\ftype}=1$ for each $i\in \{1,2,\cdots,n_{\ell,\delta}+1\}$, $x_{i\mtype}=\ell+1$, $x_{i\mtype}=2$ for each $i\in \{2,3,\cdots,n_{\ell,\delta}+1\}$ and the rest of the entries are equal to zero.
	We verify next in what follows that $x\in \calB_{\delta}(\calP,\calJ,\phi)$, and furthermore, $x$ is unique.
	The matrix $x$ satisfies by construction the row and column marginals.
	Since the rows in $\{2,3,\ell+2\}$ of $\calP$ are all equal, and the same holds for $x$, it is enough to show that the inequalities of Theorem~\ref{thm:cycles} are satisfied for the rows one and two.
	Observe that
	\begin{align*}
		\frac{\delta(x_{1\ftype})}{\calP_{1\ftype}}\cdot \frac{\delta(x_{2\mtype})}{\calP_{2\mtype}}- \frac{\delta(x_{1\mtype}+1)}{\calP_{1\mtype}}\cdot \frac{\delta(x_{2\ftype}+1)}{\calP_{2\ftype}}&=\frac{\delta(1)}{\ell+1}\cdot \frac{\delta(3)}{3-1/(\ell+1)}-\frac{\delta(\ell+2)}{1}\cdot \frac{\delta(2)}{1/(\ell+1)}\\
		&=-(\ell+1)\cdot \delta(\ell+2)\cdot \delta(2)<0,
	\end{align*}
	\begin{align*}
		\frac{\delta(x_{2\ftype})}{\calP_{2\ftype}}\cdot \frac{\delta(x_{1\mtype})}{\calP_{1\mtype}}- \frac{\delta(x_{1\ftype}+1)}{\calP_{1\ftype}}\cdot \frac{\delta(x_{2\mtype}+1)}{\calP_{2\mtype}}&=\frac{\delta(1)}{1/(\ell+1)}\cdot \frac{\delta(\ell+1)}{1}-\frac{\delta(2)}{\ell+1}\cdot \frac{\delta(3)}{3-1/(\ell+1)}\\
		&=-\frac{\delta(2)\cdot \delta(3)}{3\ell+2}<0,
	\end{align*}
	By Theorem~\ref{thm:cycles} we conclude that $x\in \calB_{\delta}(\calP,\calJ,\phi)=\calB_{\delta}(\calD_{\ell,\delta})$ and since the inequalities are satisfied strictly we have that $x$ is the unique $\delta$-biproportional solution of $\calD_{\ell,\delta}$.
\end{proof}

\subsection{Fraction of Rows Violating the Fair Share Rounding}

Formally, given a two-dimensional instance $(\calP,\calJ,\phi)$ with fair share $\calF$ and given a signpost sequence $\delta$, let $\Lambda^{\delta}$ be the fraction of rows for which a biproportional solution $x\in \calB_{\delta}(\calP,\calJ,\phi)$ does not respect the fair share rounded up or down respectively, that is, $\Lambda^{\delta}(x,\calP,\calJ,\phi)=\frac{1}{n}|\{(i,t):x_{it}>\calF_{it}\}|$.
Observe that in an instance with two columns, if in a row one of two entries is smaller (larger) than the fair share rounded down (up), then the other entry of the same row is necessarilly larger (smaller) that the fair share rounded up (down).
That is, whenever there is a rounding violation, it happens to both entries of the row. 
The following is our main result in this line.
\begin{theorem}
	\label{thm:violates-fair}
	Let $\delta$ be a signpost sequence.
	Then, the following holds:
	\begin{enumerate}[label=(\alph*)]
		\item When $\delta(1)>0$, there exists a two-dimensional instance $\calT_{\delta}$ for which there is a unique $\delta$-biproportional solution $x$ of $\calT_{\delta}$ and $\Lambda^{\delta}(x,\calT_{\delta})\ge 1/(1+\lceil \delta(1)^{-2}+1\rceil)$.
		\item When $\delta(1)=0$, there exists a two-dimensional instance $\calT_{\delta}$ for which there is a unique $\delta$-biproportional solution $x$ of $\calT_{\delta}$ and $\Lambda^{\delta}(x,\calT_{\delta})\ge 1/3$.
	\end{enumerate}
	Furthermore, these instances have type marginals satisfying $\phi_{\ftype}=\phi_{\mtype}$.
\end{theorem}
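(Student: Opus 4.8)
The plan is to build, for each signpost sequence $\delta$, an explicit two-dimensional instance $\calT_\delta$ in the spirit of Theorem~\ref{thm:how-much-big}, but now with the total number of rows controlled so that the rows on which the biproportional solution disagrees with the rounded fair share form a constant fraction, and with $\phi_\ftype=\phi_\mtype$. The backbone is the same as in the proof of Theorem~\ref{thm:how-much-big}: first I would pick a matrix $\calG$ that already satisfies the prescribed marginals, so that by Proposition~\ref{prop:already-fair} the matrix $\calG$ is the fair share of $(\alpha\calG,\calJ,\phi)$ for every $\alpha>0$; then I would exhibit an integral matrix $x$, verify $x\in\calB_\delta(\calG,\calJ,\phi)$ through the pairwise inequalities of Theorem~\ref{thm:cycles}, and make every such inequality \emph{strict}, so that $x$ is the unique $\delta$-biproportional solution. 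With this in place, $\Lambda^\delta(x,\calT_\delta)$ is simply the normalized count of rows on which $x$ departs from the rounded entries of $\calG$, which can be read off directly from the construction. The strictness needed in Theorem~\ref{thm:cycles} would come either from the disjunction property~\ref{divisor3}, when some entry of $\calG$ is an integer, or from an explicit gap estimate of the $\Gamma_{\ell,\delta}$ type used in Proposition~\ref{prop:existence1}.

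For part~(a), when $\delta(1)>0$, I would include a small bounded block of ``deviating'' rows on which the unique $x$ is pushed one unit away from a fractional fair share in each column, and, to keep $\phi_\ftype=\phi_\mtype$, I would pair each deviating row with an $\ftype\leftrightarrow\mtype$ mirror row so that their column contributions cancel. Since the biproportional solution must be \emph{forced} off the fair share, I would then append a number of ``auxiliary'' rows whose vote entries are small enough (as a function of $\delta(1)$) that the desired $x$ is the only matrix consistent with Theorem~\ref{thm:cycles}; counting the deviating rows against the total then yields a fraction at least $1/(1+\lceil\delta(1)^{-2}+1\rceil)$, the quantity $\delta(1)^{-2}$ being exactly the order of the required number of auxiliary rows. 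For part~(b), when $\delta(1)=0$, the identity $\delta(0)=0$ kills the problematic terms in Theorem~\ref{thm:cycles}, so no auxiliary rows are needed: a fixed-size gadget—a symmetrized version of the $\delta(1)=0$ instance of Theorem~\ref{thm:how-much-big}, with a constant number of rows of which at least a third deviate—already gives a unique $x$ with $\Lambda^\delta\ge 1/3$ and $\phi_\ftype=\phi_\mtype$.

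The step I expect to be the main obstacle is verifying the inequalities of Theorem~\ref{thm:cycles} for the \emph{cross pairs}: a deviating row against its mirror, and a deviating row against an auxiliary row. The naive ``stack the column mirror'' symmetrization does not work—an extremely $\ftype$-biased row placed next to an extremely $\mtype$-biased one admits an improving exchange, so the relevant inequality is violated—which is why the deviations of the chosen rows must be kept mild (a single unit) and why the auxiliary rows must be calibrated so that every cross-pair inequality still holds, and holds strictly. Tracking this calibration is precisely what forces the auxiliary count to be of order $\lceil\delta(1)^{-2}+1\rceil$, and it requires a sharpened, quantitative form of Proposition~\ref{prop:existence1} that controls how small the auxiliary entries must be rather than merely asserting that some large enough number of rows exists. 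Once all pairwise inequalities are checked strictly, uniqueness of $x$ follows from the last sentence of Theorem~\ref{thm:cycles}, the lower bounds on $\Lambda^\delta$ follow from a direct row count, and $\phi_\ftype=\phi_\mtype$ is guaranteed by the $\ftype\leftrightarrow\mtype$ symmetry built into both the deviating and the auxiliary rows.
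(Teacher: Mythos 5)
Your skeleton is the paper's skeleton: pin the fair share with Proposition~\ref{prop:already-fair}, exhibit an integral $x$, verify membership and uniqueness via strict pairwise inequalities in Theorem~\ref{thm:cycles}, and count deviating rows against $1+\Theta(\delta(1)^{-2})$ total rows for part~(a) and a constant-size gadget for part~(b). But as written the proposal has two genuine gaps.

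First, the one concrete structural device you commit to --- pairing each deviating row with an $\ftype\leftrightarrow\mtype$ mirror so that ``their column contributions cancel'' --- is either impossible or pointless. If the mirror is itself supposed to deviate in the opposite column direction (which is the natural reading, since you single out the deviating-row-versus-mirror pair as the critical inequality), then no calibration can save it: if row $i$ satisfies $x_{i\ftype}\ge\lceil\calF_{i\ftype}\rceil+1$ and row $j$ satisfies $x_{j\mtype}\ge\lceil\calF_{j\mtype}\rceil+1$, the computation in the proof of Theorem~\ref{thm:two-rows} (which is purely pairwise and uses only the row sums, so it applies verbatim to any two rows of a larger instance) shows that the Theorem~\ref{thm:cycles} inequality for the pair $(i,j)$ forces a chain of equalities contradicting the disjunction property \ref{divisor3}. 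In other words, in \emph{any} $\delta$-biproportional solution all rounding violations must point in the same column direction; keeping the deviation to ``a single unit'' does not help. If instead the mirror is not supposed to deviate, it contributes nothing and the whole mechanism for enforcing $\phi_{\ftype}=\phi_{\mtype}$ collapses to choosing the vote numbers so the columns of the fair share sum equally --- which is exactly what the paper does, with no mirror: in part~(a) a single deviating row with fair share $(3n_{\delta}-1,1)$ receives $(3n_{\delta},0)$, and its one excess $\ftype$-unit is absorbed in fractional pieces of size $1/n_{\delta}$ by the $n_{\delta}$ auxiliary rows with fair share $(1/n_{\delta},\,3-1/n_{\delta})$, none of which individually leaves its rounding interval.

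Second, the entire quantitative content of the theorem is deferred rather than supplied. You correctly identify that the cross-pair inequalities are the crux and that one needs ``a sharpened, quantitative form of Proposition~\ref{prop:existence1}'', but you never produce the instance or the estimate. The paper's proof is essentially nothing but this calibration: for part~(a) the choice $n_{\delta}=\lceil\delta(1)^{-2}+1\rceil$ is exactly what makes $\frac{\delta(3n_{\delta})}{3n_{\delta}-1}\cdot\frac{\delta(3)}{3-1/n_{\delta}}<n_{\delta}\delta(1)^2$ hold strictly, and for part~(b) one must exhibit an explicit $\varepsilon_{\delta}$ in the nonempty interval $\bigl[\tfrac{3\delta(6)-14\delta(5)}{3\delta(6)+7\delta(5)},\,\tfrac{21\delta(5)}{3\delta(6)+7\delta(5)}\bigr]$ and check four inequalities. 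Without these explicit constructions the argument does not yet establish existence of the claimed instances, let alone uniqueness of $x$.
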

That is, the theorem states that even when we restrict to instances where $\phi_{\ftype}=\phi_{\mtype}$ for the types, the fraction of rows that violate the fair share rounding is in general bounded away from zero, by a strictly positive constant.
In particular, when $\delta(1)=1$, which is the case for the Jefferson rounding, Theorem~\ref{thm:violates-fair} states the existence of a two-dimensional instance with only three rows for which the unique biproportional solution violates the fair share rounding in one row.
Same holds for other classic rounding with $\delta(1)=0$, the Adams rounding.

\begin{proof}[Proof of Theorem~\ref{thm:violates-fair}]
	Let $\delta$ be a signpost sequence such that $\delta(1)>0$ and let $n_{\delta}=\lceil \delta(1)^{-2}+1\rceil$.
	Consider the matrix $\calP$ with $1+n_{\delta}$ rows defined as follows: $\calP_{1\ftype}=3n_{\delta}-1$, 
	$\calP_{1\mtype}=1$, 
	$\calP_{i\ftype}=1/n_{\delta}$ for each $i\in \{2,3,\ldots,n_{\delta}+1\}$ and
	$\calP_{i\mtype}=3-1/n_{\delta}$ for each $i\in \{2,3,\ldots,n_{\delta}+1\}$.
	We define the marginals as follows: $\calJ^{\delta}_{1}=3n_{\delta}$ and $\calJ_{i}=3$ for each $i\in \{2,3,\ldots,n_{\delta}+1\}$, and the type marginals are given by $\phi_{\ftype}=\phi_{\mtype}=3n_{\delta}$.
	Consider the instance $\calT_{\delta}=(n_{\delta}\calP,\calJ,\phi)$.
	
	By construction, we have that $\sum_{i=1}^{n_{\delta}+1}\calP_{i\ftype}=\phi_{\ftype}$, $\sum_{i=1}^{n_{\delta}+1}\calP_{i\mtype}=\phi_{\mtype}$ and $\calP_{i\ftype}+\calP_{i\mtype}=\calJ_{i}$ for each $i\in [n_{\delta}+1]$.
	Therefore, by Propoposition~\ref{prop:already-fair} we have that $\calP$ is the fair share of $(n_{\delta}\calP,\calJ,\phi)$.
	Furthermore, by Lemma \ref{lem:technical-biprop} \ref{opt:scaling} we have that $\calB_{\delta}(n_{\delta} \calP,\calJ,\phi)=\calB_{\delta}(\calP,\calJ,\phi)$.
	Therefore, it is sufficient to compare the biproportional solutions of $(\calP,\calJ,\phi)$ with respect to $\calP$.
	Consider the matrix $x$ defined as follows: $x_{1\ftype}=3n_{\delta}$, $x_{1\mtype}=0$, $x_{i\ftype}=0$ for each $i\in \{2,3,\ldots,n_{\delta}+1\}$ and $x_{i\mtype}=3$ for each for each $i\in \{2,3,\ldots,n_{\delta}+1\}$.
	In what follows we prove that $x\in \calB_{\delta}(\calP,\calJ,\phi)$.
	From here the theorem follows since $\Lambda^{\delta}(x,\calT_{\delta})=1/(n_{\delta}+1)=1/(1+\lceil \delta(1)^{-2}+1\rceil)$.
	\begin{figure}[H]
		$$\calP=\begin{pmatrix} 3n_{\delta}-1 & 1\\ 1/n_{\delta} & 3-1/n_{\delta}\\ \vdots & \vdots \\ 1/n_{\delta} & 3-1/n_{\delta}\end{pmatrix}\quad \quad \calJ=\begin{pmatrix} 3n_{\delta} \\ 3\\ \vdots\\ 3\end{pmatrix}\quad \quad \phi=\begin{pmatrix}  3n_{\delta}\\ 3n_{\delta}\end{pmatrix} \quad \quad x=\begin{pmatrix} 3n_{\delta} & 0\\ 0 & 3\\ \vdots & \vdots \\ 0 & 3\end{pmatrix}$$
		\caption{Two-dimensional instance and its biproportional solution when $\delta(1)>0$.}
	\end{figure}
	\noindent To check that $x\in \calB_{\delta}(\calP,\calJ,\phi)$ it is sufficient to verify that the inequalities of Theorem~\ref{thm:cycles} are satisfied for $x$.
	Furthermore, by symmetry it is enough to check that the set of inequalities hold for the rows one and two.
	We have that the choice of $n_{\delta}$ guarantees that
	\begin{align*}
		\frac{\delta(x_{1\ftype})}{\calP_{1\ftype}}\cdot \frac{\delta(x_{2\mtype})}{\calP_{2\mtype}}-\frac{\delta(x_{2\ftype}+1)}{\calP_{2\ftype}}\cdot \frac{\delta(x_{1\mtype}+1)}{\calP_{1\mtype}}&=
		\frac{\delta(3n_{\delta})}{3n_{\delta}-1}\cdot \frac{\delta(3)}{3-1/n_{\delta}}-\frac{\delta(1)}{1/n_{\delta}}\cdot \frac{\delta(1)}{1}\\
		&\le \frac{3n_{\delta}}{3n_{\delta}-1}\cdot \frac{3}{3-1/n_{\delta}}-n_{\delta}\cdot \delta(1)^2\\
		&=n_{\delta}\left(\frac{9n_{\delta}}{(3n_{\delta}-1)^2}-\delta(1)^2\right)< 0,
	\end{align*}
	where in the second inequality we used that for every $x\ge 2$ it holds that $1/(x-1)>9x/(3x-1)^2$, and the last inequality holds by the definition of $n_{\delta}$.
	On the other hand, for the second inequality we have that 
	\begin{align*}
		\frac{\delta(x_{2\ftype})}{\calP_{2\ftype}}\cdot \frac{\delta(x_{1\mtype})}{\calP_{1\mtype}}- \frac{\delta(x_{1\ftype}+1)}{\calP_{1\ftype}}\cdot \frac{\delta(x_{2\mtype}+1)}{\calP_{2\mtype}}
		&=\frac{\delta(0)}{1/n_{\delta}}\cdot \frac{\delta(0)}{1}-\frac{\delta(3n_{\delta}+1)}{3n_{\delta}-1}\cdot \frac{\delta(4)}{3-1/n_{\delta}}\\
		&=-\frac{\delta(3n_{\delta}+1)}{3n_{\delta}-1}\cdot \frac{\delta(4)}{3-1/n_{\delta}}<0,
	\end{align*}
	since $\delta(0)=0$.
	The inequalities are satisfied strictly, therefore $x$ is the unique $\delta$-biproportional solution of $\calT_{\delta}$.
	That concludes the proof for the case when $\delta(1)>0$.
	
	Now suppose that $\delta(1)=0$ and let $\varepsilon_{\delta}\in (0,1)$ be any rational value such that  
	$$\frac{3\delta(6)-14\delta(5)}{3\delta(6)+7\delta(5)}\le \varepsilon_{\delta}\le \frac{21\delta(5)}{3\delta(6)+7\delta(5)}.$$
	We remark that this value exists, since $\delta(6)\le 6$ and $\delta(5)\ge 5$ and therefore $3\delta(6)-14\delta(5)<21\delta(5)$.
	Consider the matrix $\calP$ with three rows and two columns defined as follows: $\calP_{1\ftype}=7$, 
	$\calP_{1\mtype}=3$, 
	$\calP_{2\ftype}=\varepsilon_{\delta}$, 
	$\calP_{2\mtype}=3-\varepsilon_{\delta}$,
	$\calP_{3\ftype}=1-\varepsilon_{\delta}$ and 
	$\calP_{4\mtype}=2+\varepsilon_{\delta}$.
	We define the marginals as follows: $\calJ_{1}=10$ and $\calJ_{2}=\calJ_{3}=3$, and the type marginals are given by $\phi_{\ftype}=\phi_{\mtype}=8$.
	Consider the instance $\calT_{\delta}=(\alpha_{\delta}\calP,\calJ,\phi)$ where $\alpha_{\delta}$ is the smallest positive integer number such that $\alpha_{\delta}\varepsilon_{\delta}$ is integer.
	By construction we have that $\sum_{i=1}^{3}\calP_{i\ftype}=\phi_{\ftype}$, $\sum_{i=1}^{3}\calP_{i\mtype}=\phi_{\mtype}$ and $\calP_{i\ftype}+\calP_{i\mtype}=\calJ_{i}$ for each $i\in [3]$.
	Therefore, by Propoposition~\ref{prop:already-fair} we have that $\calP$ is the fair share of $(\calP,\calJ,\phi)$.
	Furthermore, by Lemma \ref{lem:technical-biprop} \ref{opt:scaling} we have that $\calB_{\delta}(\alpha_{\delta} \calP,\calJ,\phi)=\calB_{\delta}(\calP,\calJ,\phi)$.
	Therefore, it is sufficient to compare the biproportional solutions of $(\calP,\calJ,\phi)$ with respect to $\calP$.
	Consider the matrix $x$ defined as follows: $x_{1\ftype}=6$, $x_{1\mtype}=4$, $x_{2\ftype}=x_{3\ftype}=1$ and $x_{2\mtype}=x_{3\mtype}=2$.
	In what follows we prove that $x\in \calB_{\delta}(\calP,\calJ,\phi)$.
	From here the theorem follows since $\Lambda^{\delta}(x,\calT_{\delta})=1/3$.
	\begin{figure}[H]
		$$\calP=\begin{pmatrix} 7 & 3\\ \varepsilon_{\delta} & 3-\varepsilon_{\delta}\\  1-\varepsilon_{\delta} & 2+\varepsilon_{\delta}\end{pmatrix}\quad \quad \calJ=\begin{pmatrix} 10 \\ 3\\ 3\end{pmatrix}\quad \quad \phi=\begin{pmatrix}  8\\ 8\end{pmatrix} \quad \quad x=\begin{pmatrix} 6 & 4 \\ 1 & 2 \\1 & 2\end{pmatrix}$$
		\caption{Two-dimensional instance and its biproportional solution when $\delta(1)=0$.}
	\end{figure}
	\noindent To check that $x\in \calB_{\delta}(\calP,\calJ,\phi)$ it is sufficient to verify that the inequalities of Theorem~\ref{thm:cycles} are satisfied for $x$.
	We have that the inequalities for rows two and three are satisfied directly since $x_{2\ftype}=x_{3\ftype}=1$, $x_{2\mtype}=x_{3\mtype}>1$ and $\delta(1)=0$.
	Consider the rows one and two.
	The choice of $\varepsilon_{\delta}$ guarantee that 
	\begin{align*}
		\frac{\delta(x_{1\ftype})}{\calP_{1\ftype}}\cdot \frac{\delta(x_{2\mtype})}{\calP_{2\mtype}}-\frac{\delta(x_{2\ftype}+1)}{\calP_{2\ftype}}\cdot \frac{\delta(x_{1\mtype}+1)}{\calP_{1\mtype}}=
		\frac{\delta(6)}{7}\cdot \frac{\delta(2)}{3-\varepsilon_{\delta}}-\frac{\delta(2)}{\varepsilon_{\delta}}\cdot \frac{\delta(5)}{3}<0.
	\end{align*}
	On the other hand, for the second inequality we have that 
	\begin{align*}
		\frac{\delta(x_{2\ftype})}{\calP_{2\ftype}}\cdot \frac{\delta(x_{1\mtype})}{\calP_{1\mtype}}- \frac{\delta(x_{1\ftype}+1)}{\calP_{1\ftype}}\cdot \frac{\delta(x_{2\mtype}+1)}{\calP_{2\mtype}}&=\frac{\delta(1)}{\varepsilon_{\delta}}\cdot \frac{\delta(4)}{3}-\frac{\delta(8)}{7}\cdot \frac{\delta(4)}{3-\varepsilon_{\delta}}\\
		&=-\frac{\delta(8)}{7}\cdot \frac{\delta(4)}{3-\varepsilon_{\delta}}<0,
	\end{align*}
	since $\delta(0)=0$.
	Now consider rows one and three.
	The choice of $\varepsilon_{\delta}$ guarantee that 
	\begin{align*}
		\frac{\delta(x_{1\ftype})}{\calP_{1\ftype}}\cdot \frac{\delta(x_{3\mtype})}{\calP_{3\mtype}}-\frac{\delta(x_{3\ftype}+1)}{\calP_{3\ftype}}\cdot \frac{\delta(x_{1\mtype}+1)}{\calP_{1\mtype}}=
		\frac{\delta(6)}{7}\cdot \frac{\delta(2)}{2+\varepsilon_{\delta}}-\frac{\delta(2)}{1-\varepsilon_{\delta}}\cdot \frac{\delta(5)}{3}<0.
	\end{align*}
	On the other hand, for the second inequality we have that 
	\begin{align*}
		\frac{\delta(x_{3\ftype})}{\calP_{3\ftype}}\cdot \frac{\delta(x_{1\mtype})}{\calP_{1\mtype}}- \frac{\delta(x_{1\ftype}+1)}{\calP_{1\ftype}}\cdot \frac{\delta(x_{3\mtype}+1)}{\calP_{3\mtype}}&=\frac{\delta(1)}{1-\varepsilon_{\delta}}\cdot \frac{\delta(4)}{3}-\frac{\delta(7)}{7}\cdot \frac{\delta(3)}{2+\varepsilon_{\delta}}\\
		&=-\frac{\delta(7)}{7}\cdot \frac{\delta(3)}{2+\varepsilon_{\delta}}<0,
	\end{align*}
	since $\delta(0)=0$.
	Since the inequalities are satisfied strictly, $x$ is the unique $\delta$-biproportional solution of $\calT_{\delta}$.
	That concludes the proof for the case when $\delta(1)>0$.
\end{proof}

\section{Consequences on the Quality of Apportionment}
\label{sec:consequences}
In the setting of biproportional apportionment by Balinski and Demange, a two-dimensional instance corresponds to a triplet $(V,R,C)$ where $V$ is a matrix of dimensions $p\times d$, $R$ is a vector in $\ZZ^p$ and $C$ is a vector in $\ZZ^d$ such that $\sum_{i\in [p]}R_i=\sum_{j\in [d]}C_j$.
We say that a set valued function $\varphi$ is a {\it two-dimensional apportionment method} if for every two-dimensional instance $(V,R,C)$ we have that every $x\in \varphi(V,R,C)$ is a non-negative and integral matrix of dimensions $p\times d$ that satisfies the following: $\sum_{j\in [d]}x_{ij}=R_i$ for every $i\in [p]$ and $\sum_{i\in [p]}x_{ij}=C_j$ for every $j\in [d]$.
Given a signpost sequence $\delta$, a $\delta$-biproportional solution of an instance $(V,R,D)$ is defined exactly by the conditions \eqref{biprop1}-\eqref{biprop4} for every $i\in [p]$ and every $t\in [d]$, and where $\calS_{it}=h$ for every $i\in [p]$ and every $t\in [d]$.

Balinski and Demange proved that the family of biproportional methods are the unique two-dimensional apportionment methods that satisfy a list of natural properties (exactness, monotonicity, uniformity) 
that we call the {\bf BD} properties \cite{BalinskiDemange1989}[Section II, p. 711].
The fair share of an instance $(V,R,D)$ is defined exactly by the conditions \eqref{fs1}-\eqref{fs3} for every $i\in [p]$ and every $t\in [d]$.
We say that a two-dimensional method satisfies the {\it lower quota} property if the output of the method is always at least the fair share rounded down.
Similarly, a two-dimensional method satisfies the {\it upper quota} property if the output of the method is always at most the fair share rounded up.
Our results from Section \ref{sec:fairness} for the case of $d=2$, together with the characterization of Balinski and Demange, imply the following impossibility result.
\begin{corollary}
	\label{cor:impossibility}
	There is no two-dimensional apportionment method that simultaneously satisfy the {\bf BD} properties and the lower quota property.
	Similarly, there is no two-dimensional apportionment method that simultaneously satisfy the {\bf BD} properties and the upper quota property.
\end{corollary}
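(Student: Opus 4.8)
The plan is to derive the corollary directly from the Balinski--Demange characterization together with Theorem~\ref{thm:violates-fair}. Suppose $\varphi$ is a two-dimensional apportionment method satisfying the \textbf{BD} properties. By the characterization of Balinski and Demange~\cite{BalinskiDemange1989}, $\varphi$ must be one of the biproportional methods: there is a signpost sequence $\delta$ such that for every two-dimensional instance $(V,R,C)$ the set $\varphi(V,R,C)$ equals the set of $\delta$-biproportional solutions, which for $(V,R,C)$ are defined by \eqref{biprop1}--\eqref{biprop4} with all supplies set to $h=\sum_i R_i$. When $C$ has two entries, constraint~\eqref{biprop4} is redundant and this is exactly the two-dimensional instance setting of Section~\ref{sec:fairness}; in particular the fair share of such an instance is the object $\calF$ of Definition~\ref{def:fairshare}, and the $\delta$-biproportional solutions are those of Definition~\ref{def:biproportional}.

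Next I would apply Theorem~\ref{thm:violates-fair} to this $\delta$. Whatever the value of $\delta(1)$, the theorem supplies a strictly positive two-column instance $\calT_{\delta}$ admitting a \emph{unique} $\delta$-biproportional solution $x$, hence $\varphi(\calT_{\delta})=\{x\}$. It then suffices to check that $x$ breaks both quota bounds on $\calT_{\delta}$. In the instances constructed in the proof of Theorem~\ref{thm:violates-fair} the fair share $\calF$ equals the base matrix $\calP$ (Proposition~\ref{prop:already-fair}), whose first row has integer entries; and the displayed solution $x$ satisfies, in the first row, $x_{1t}=\calF_{1t}+1$ for one type $t$ and $x_{1t'}=\calF_{1t'}-1$ for the other type $t'$ (the roles of $t,t'$ depending on whether $\delta(1)>0$ or $\delta(1)=0$). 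Thus $x_{1t}=\lceil\calF_{1t}\rceil+1$ exceeds the fair share rounded up, while $x_{1t'}=\lfloor\calF_{1t'}\rfloor-1$ lies strictly below the fair share rounded down.

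Therefore $\varphi$, whose unique value on $\calT_{\delta}$ is $x$, violates the upper quota property, and it also violates the lower quota property; in either reading this contradicts the hypothesis, so both statements of the corollary follow at once. The only step requiring any care is the first one --- identifying the Balinski--Demange instance model (and their notion of fair share) with the uniform-supply two-dimensional instances of Section~\ref{sec:fairness} --- but this is immediate: fixing every supply entry to $h$ makes \eqref{biprop4} vacuous and turns Definitions~\ref{def:biproportional} and~\ref{def:fairshare} into the corresponding Balinski--Demange objects, and the instances $\calT_{\delta}$ are strictly positive, so they are legitimate instances for their theorem. I do not expect any genuine obstacle beyond this bookkeeping, since all of the analytic content is already contained in Theorem~\ref{thm:violates-fair}.
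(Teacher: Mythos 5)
Your proposal is correct and matches the paper's (implicit) argument: the paper likewise derives the corollary by combining the Balinski--Demange characterization with the negative results of Section~\ref{sec:fairness} for $d=2$, and your verification that in both instances of Theorem~\ref{thm:violates-fair} the unique biproportional solution exceeds $\lceil\calF_{1t}\rceil$ in one entry of the first row and falls below $\lfloor\calF_{1t'}\rfloor$ in the other is accurate. No gaps.
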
 
This constrasts with the one dimensional case $d=1$ where the {\bf BD} properties are compatible with the lower quota or upper quota properties.
The Jefferson method is the unique divisor method that satisfies the lower quota property, while Adams method is the unique divisor method that satisfies the upper quota property \cite{BYbook}.
Balinski and Young~\cite{BYbook} studied the stronger property where both lower and upper quota are satisfied simultaneously, known as the {\it staying within fair share} property. 
They studied this stronger property in the context of the one dimensional ($d=1$) apportionment problem, where the fair share in that case corresponds to the fractional assignment obtained by assigning seats proportionally to the votes obtained for each party. 
Surprisingly, in this case there is a strong impossibility result: Balinski and Young showed that there is no divisor method staying within fair share~\cite[Corollary 6.1, p.130]{BYbook}.
As a corollary of Theorems~\ref{thm:two-rows} and \ref{thm:violates-fair} we get the following corollary.

\begin{corollary}
	\label{thm:biprop-no-breach}
	For every signpost sequence $\delta$ with $\delta(1)>0$, the $\delta$-biproportional parity mechanism stays within fair share for instances satisfying the supply condition with only two parties.
	On the other hand, for every signpost sequence $\delta$, the $\delta$-biproportional parity mechanism does not stay within fair share for instances with three or more parties.
\end{corollary}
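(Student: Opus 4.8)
The plan is to obtain the positive part from Theorem~\ref{thm:two-rows} and the negative part from Theorem~\ref{thm:violates-fair}, after checking that, under the supply condition, the \emph{capacitated} biproportional problem actually solved by $\calM_{\delta}^{B}$ reduces to the \emph{uncapacitated} one studied in Section~\ref{sec:fairness}, and after realizing the abstract two-dimensional instances of Theorem~\ref{thm:violates-fair} as instances with type parity. Throughout, by the ``fair share of an instance $\calI$'' I mean the fair share of $(\calP(\calI),\calJ,\phi(\calI))$ for the row marginal $\calJ\in\apportiong(\iparty)$ carried by the output under consideration.

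For the positive part, fix $\calI$ with exactly two parties satisfying the supply condition and with $\calP(\calI)$ strictly positive (so that the fair share exists), fix $\gamma$ and $\delta$ with $\delta(1)>0$ (which guarantees $\calM_{\delta}^{B}(\calI,\gamma)\neq\emptyset$ by Proposition~\ref{lem:biprop-supply-feasible}), and take $\calE_{x}\in\calM_{\delta}^{B}(\calI,\gamma)$ with underlying $x\in\calB_{\delta}(\calP(\calI),\calS(\calI),\calJ,\phi(\calI))$. The first step is the observation that the supply bound is inert: every matrix with column marginals $\phi(\calI)$ satisfies $x_{it}\le\phi_{t}(\calI)\le\lceil h/2\rceil\le|\calC_{i}^{t}|=\calS_{it}(\calI)$, so that, modulo the subtlety discussed below, $x$ is also a $\delta$-biproportional solution of the uncapacitated $2\times2$ instance $(\calP(\calI),\calJ,\phi(\calI))$, i.e. it satisfies the cycle inequalities of Theorem~\ref{thm:cycles}. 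Theorem~\ref{thm:two-rows} then yields $\lfloor\calF_{it}\rfloor\le x_{it}\le\lceil\calF_{it}\rceil$ for the corresponding fair share $\calF$, and since Step~\ref{step3} of Algorithm~\ref{alg:biprop} elects exactly $x_{it}$ type-$t$ candidates of party $i$, the mechanism stays within fair share.

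For the negative part it suffices to exhibit, for each $\delta$, a single instance with at least three parties on which \emph{some} output of $\calM_{\delta}^{B}$ breaches a rounded fair-share bound. Fix $\delta$; Theorem~\ref{thm:violates-fair} produces a two-dimensional instance of the form $(m\calP,\calJ,\phi)$ with $m$ a positive integer, $m\calP$ integral and strictly positive, at least three rows, $\phi_{\ftype}=\phi_{\mtype}$, $\calP$ already meeting the marginals $(\calJ,\phi)$, and a unique $\delta$-biproportional solution $x$ breaching the rounded fair share in row one (the relevant entry of $\calP$ being an integer; one may also use Theorem~\ref{thm:how-much-big} with $\ell=1$, where $x_{1\ftype}\ge\calF_{1\ftype}+1>\lceil\calF_{1\ftype}\rceil$ outright). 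I would build $\calI$ with one party per row, house size $h=\phi_{\ftype}+\phi_{\mtype}$ (even), and exactly $\lceil h/2\rceil$ candidates in each cell $(i,t)$ carrying total votes $(m\calP)_{it}$, distributed arbitrarily. Then $\calP(\calI)=m\calP$, the supply condition holds, $\phi(\calI)=\phi$, and $\calQ_{i}(\calI)=m\calJ_{i}$ with $\sum_{i}\calJ_{i}=h$, so Lemma~\ref{lem:jefferson-one-dim}\ref{jeff-proportional} forces $\apportiong(\iparty)=\{\calJ\}$ for every $\gamma$. Since saturating a cell only relaxes constraints, the uncapacitated solution $x$ lies in $\calB_{\delta}(\calP,\calJ,\phi)\subseteq\calB_{\delta}(\calP,\calS(\calI),\calJ,\phi)=\calB_{\delta}(m\calP,\calS(\calI),\calJ,\phi)$ (the last equality by Lemma~\ref{lem:technical-biprop}\ref{opt:scaling}), hence $\calE_{x}\in\calM_{\delta}^{B}(\calI,\gamma)$; and by Proposition~\ref{prop:already-fair} the fair share of $(m\calP,\calJ,\phi)$ is $\calP$, so $\calE_{x}$ breaches the rounded fair-share bound in row one (for instance $x_{1\ftype}>\lceil\calP_{1\ftype}\rceil$ when $\delta(1)>0$). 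The case $\delta(1)=0$ is handled identically, using the three-row instance of Theorem~\ref{thm:violates-fair} for that case (where $x_{1\mtype}>\lceil\calP_{1\mtype}\rceil$).

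The step I expect to be the crux is the reduction used in the positive part: justifying that, for two-party instances satisfying the supply condition, the capacitated set $\calB_{\delta}(\calP(\calI),\calS(\calI),\calJ,\phi(\calI))$ coincides with the uncapacitated $\calB_{\delta}(\calP(\calI),\calJ,\phi(\calI))$, the delicate point being cells $(i,t)$ at which the supply bound $|\calC_{i}^{t}|=\lceil h/2\rceil$ is tight and $x_{it}=\phi_{t}(\calI)=\lceil h/2\rceil$, where condition (\ref{biprop1}) is waived in the capacitated formulation; one must check that no such ``corner'' matrix is actually returned by the network-flow algorithm of Lemma~\ref{lem:biprop-duality}, so that Theorem~\ref{thm:two-rows} governs every output. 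Everything else—the bookkeeping of marginals, the supply condition, and the invocations of Lemma~\ref{lem:jefferson-one-dim}\ref{jeff-proportional}, Lemma~\ref{lem:technical-biprop}\ref{opt:scaling} and Proposition~\ref{prop:already-fair}—is routine.
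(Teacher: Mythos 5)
Your overall route is the paper's: the two‑party positive statement is read off Theorem~\ref{thm:two-rows} and the negative statement off Theorem~\ref{thm:violates-fair} (the paper offers no argument beyond citing these two results). Your negative half is complete and in fact supplies details the paper omits: realizing the abstract instances of Theorem~\ref{thm:violates-fair} (or Theorem~\ref{thm:how-much-big} with $\ell=1$) as parity instances with one party per row and $\lceil h/2\rceil$ candidates per cell, checking via Lemma~\ref{lem:jefferson-one-dim}\ref{jeff-proportional} that $\apportiong(\iparty)=\{\calJ\}$, and observing that the uncapacitated solution set sits inside the capacitated one. That part I consider correct.

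The positive half, however, rests on the reduction you yourself flag as the crux, and that reduction is a genuine gap which your proposed fix does not close. The mechanism's output is defined through the set $\calB_{\delta}(\calP(\calI),\calS(\calI),\calJ,\phi(\calI))$ of Definition~\ref{def:biproportional}, not through what the network-flow algorithm happens to return, and condition \eqref{biprop1} is waived outright at any cell with $x_{it}=\calS_{it}$. When the supply condition is tight, i.e.\ $|\calC_i^t|=\lceil h/2\rceil$, this set really does contain ``corner'' matrices breaching the fair share. Concretely: two parties, $h=4$, two candidates per cell, cell vote totals $\calP_{1\ftype}=\calP_{2\mtype}=1$ and $\calP_{1\mtype}=\calP_{2\ftype}=100$, so $\calJ=(2,2)$, $\phi=(2,2)$ and $\calF_{1\ftype}=2/101$. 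The matrix with $x_{1\ftype}=x_{2\mtype}=2$ and $x_{1\mtype}=x_{2\ftype}=0$ saturates two cells, and for $\delta(1)>0$ the multipliers $\lambda_1=\lambda_2=1$, $\mu_{\ftype}=\mu_{\mtype}=\delta(1)/1000$ satisfy the only constraints Definition~\ref{def:biproportional} still imposes (at the two zero cells, $100\mu\le\delta(1)$), yet $x_{1\ftype}=2>1=\lceil\calF_{1\ftype}\rceil$. So one cannot argue that ``no such corner matrix is returned by the network-flow algorithm'': under the stated definition the corner matrix is an admissible output. To repair the positive half one must either strengthen Definition~\ref{def:biproportional} so that the lower-bound half of \eqref{biprop1}, namely $\delta(x_{it})\le\calP_{it}\lambda_i\mu_t$, is retained at saturated cells (this is exactly what the complementary-slackness conditions in Lemma~\ref{lem:biprop-duality} deliver, and it kills the example above since $\delta(2)\le\delta(1)/1000$ fails) and then rerun the Theorem~\ref{thm:cycles}/Theorem~\ref{thm:two-rows} argument using only the surviving inequalities, or restrict to instances where the supply inequalities are strict so that no cell can saturate. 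This looseness is arguably the paper's as much as yours, but as written your positive half is not yet a proof.
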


For the greedy \& parity correction mechanism, the picture can be in principle worse than in the biproportional setting, and this would not be surprising since the main goal of this mechanism is not to achieve biproportionality.
However, we show that the fair share rounding violation of the solution obtained by the greedy \& parity correction mechanism can be in general very high.
Given a valid apportionment mechanism $\calM$, given a signpost sequence $\gamma$ and a solution $\calE\in \calM(\calI,\gamma)$, we define the two-dimensional instance $(\calP(\calI),\calJ(\calE),\phi(\calE))$ such that
for each party $i\in [n]$ and each type $t\in \{\ftype,\mtype\}$ we have $\calJ_i(\calE)=\sum_{c\in \calC_i}\calE(c)$ and $\phi_t(\calE)=\sum_{c\in \calC^t}\calE(c)$.
We denote by $\calF(\calE)$ the fair share of the instance $(\calP(\calI),\calJ(\calE),\phi(\calE))$.
\begin{theorem}
	\label{thm:chilean-breach} 
	There exists an instance $\calI$ with two parties that satisfies the supply condition and such that for every signpost sequence $\gamma$ and every $\calE\in \calM^{G}(\calI,\gamma)$ we have $\sum_{c\in \calC_i^t}\calE(c)\notin \{\lfloor \calF_{it}(\calE)\rfloor,\lceil \calF_{it}(\calE) \rceil\}$
	for each $i\in \{1,2\}$ and each type $t\in \{\ftype,\mtype\}$.
	In particular, the greedy \& parity correction mechanism does not stay within fair share. 
\end{theorem}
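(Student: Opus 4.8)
The plan is to exhibit one explicit two-party instance and run the mechanism on it by hand. I would take $h=6$ and let each of the two parties have three candidates of type $\ftype$ and three of type $\mtype$, so the supply condition $|\calC_i^t|\ge\lceil h/2\rceil=3$ holds with equality. In party~$1$ I give every type-$\ftype$ candidate $60$ votes and every type-$\mtype$ candidate $40$ votes, and in party~$2$ I swap the roles of the types: every type-$\ftype$ candidate gets $40$ votes and every type-$\mtype$ candidate gets $60$ votes. Then $\calQ_1(\calI)=\calQ_2(\calI)=300$, so Lemma~\ref{lem:jefferson-one-dim}~\ref{jeff-proportional} (with $\lambda=100$) gives $\apportiong(\iparty)=\{(3,3)\}$ for \emph{every} signpost sequence $\gamma$; consequently $\calM^G(\calI,\gamma)$ consists of the single solution $\calE$ returned by Algorithm~\ref{alg:chilean} on $\calJ=(3,3)$, and it suffices to analyze that one $\calE$.

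Next I would trace Algorithm~\ref{alg:chilean}. Inside party~$1$ each $60$-vote (type-$\ftype$) candidate strictly outranks each $40$-vote (type-$\mtype$) candidate in $\succ_\calI$, and inside party~$2$ each $60$-vote (type-$\mtype$) candidate strictly outranks each $40$-vote (type-$\ftype$) candidate; hence, no matter how $\succ_\calI$ breaks the remaining ties, the type-oblivious solution $\calT_{(3,3)}$ elects exactly the three $\ftype$ candidates of party~$1$ and the three $\mtype$ candidates of party~$2$. This already gives three elected candidates of each type, so the parity test at the end of Phase~1 succeeds and Phase~2 is never entered: $\calE=\calT_{(3,3)}$. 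In particular $\sum_{c\in\calC_1^\ftype}\calE(c)=\sum_{c\in\calC_2^\mtype}\calE(c)=3$ while $\sum_{c\in\calC_1^\mtype}\calE(c)=\sum_{c\in\calC_2^\ftype}\calE(c)=0$, and also $\calJ(\calE)=(3,3)$ and $\phi(\calE)=(3,3)$.

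The last step is to pin down the fair share. By construction $\calP(\calI)$ has $\calP_{1\ftype}(\calI)=\calP_{2\mtype}(\calI)=180$ and $\calP_{1\mtype}(\calI)=\calP_{2\ftype}(\calI)=120$, that is, $\calP(\calI)=100\,\calG$ for the strictly positive matrix $\calG$ with $\calG_{1\ftype}=\calG_{2\mtype}=1.8$ and $\calG_{1\mtype}=\calG_{2\ftype}=1.2$. Since $\calG$ has row sums $(3,3)=\calJ(\calE)$ and column sums $(3,3)=\phi(\calE)$, Proposition~\ref{prop:already-fair} yields that $\calG$ is the fair share $\calF(\calE)$ of $(\calP(\calI),\calJ(\calE),\phi(\calE))$. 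Every entry of $\calG$ lies strictly between $1$ and $2$, so $\lfloor\calF_{it}(\calE)\rfloor=1$ and $\lceil\calF_{it}(\calE)\rceil=2$ for all $i\in\{1,2\}$ and $t\in\{\ftype,\mtype\}$, whereas $\sum_{c\in\calC_i^t}\calE(c)\in\{0,3\}$. Therefore $\sum_{c\in\calC_i^t}\calE(c)\notin\{\lfloor\calF_{it}(\calE)\rfloor,\lceil\calF_{it}(\calE)\rceil\}$ for each such pair, which is the claim; the final sentence of the theorem then follows immediately.

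The proof is essentially the construction, and every verification above is a one-line check; the real work is choosing the instance correctly, and I expect that balancing act to be the only non-routine part. Three requirements must hold at once: the two parties must carry equal total votes, so that the row marginal $(3,3)$ is forced uniformly in $\gamma$ and $\calM^G$ is single-valued; within each party the two types must carry \emph{unequal} totals and be arranged so that Phase~1 is unambiguous regardless of how $\succ_\calI$ resolves ties; and the vote matrix must be a scalar multiple of a matrix all four of whose entries lie in the open interval $(1,2)$, so that rounding the fair share up or down never reaches the extreme values $0$ and $3$ that the mechanism actually outputs. Exchanging the two types between the parties is precisely what keeps Phase~2 idle and makes the type-oblivious output land on the ``anti-diagonal'', which is what produces the clean contradiction with the rounded fair share.
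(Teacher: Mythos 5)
Your proof is correct and follows essentially the same approach as the paper: an explicit two-party instance with forced apportionment via Lemma~\ref{lem:jefferson-one-dim}~\ref{jeff-proportional}, a hand-trace of Algorithm~\ref{alg:chilean}, and identification of the fair share via Proposition~\ref{prop:already-fair}. Your witness instance differs from the paper's (yours is symmetric, so parity already holds after Phase~1 and Phase~2 is never entered, whereas the paper's instance with marginals $(4,2)$ requires two swaps), but this is only a change of concrete numbers, not of method.
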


This contrasts with the positive result in Corollary~\ref{thm:biprop-no-breach} for the biproportional parity mechanism that guarantees to stay within fair share for instances with only two parties.
In what follows we provide the details on the construction of the family of instances in order to show Theorem~\ref{thm:chilean-breach}.

Consider the instance $\calI$ defined as follows: we have two parties and each party has six candidates. 
The house size is equal to $h=6$.
For each party we have six candidates of each type, that is, $C_i^{\ftype}=\{c_{i,1},c_{i,2},c_{i,3}\}$ and $C_i^{\mtype}=\{c_{i,4},c_{i,5},c_{i,6}\}$, with $i\in \{1,2\}$.
The votes for the candidates are defined as follows: We have 
$\votes_{\calI}(c_{1,j})=345$ for $j\in \{1,2,3\}$ and 
$\votes_{\calI}(c_{1,j})=55$ for $j\in \{4,5,6\}$;
$\votes_{\calI}(c_{2,j})=184$ for $j\in \{1,2,3\}$ and
$\votes_{\calI}(c_{2,j})=16$ for $j\in \{4,5,6\}$.
Since the house size is even we have that $\phi_{\ftype}(\calI)=\phi_{\mtype}(\calI)=3$.
We remark that $\calI$ satisfies the supply condition, and therefore the greedy \& parity correction mechanism~\ref{alg:chilean} is guaranteed to terminate with a feasible allocation thanks to Lemma~\ref{lem:chilean-correct}.
\begin{lemma}
	\label{lem:chilean-LB-output}
	Let $\calE$ be the output of Algorithm~\ref{alg:chilean} in instance $\calI$. 
	Then, the following holds:
	$\sum_{c\in \calC_1^{\ftype}}\calE(c)=3$, $\sum_{c\in \calC_1^{\mtype}}\calE(c)=1$,
	$\sum_{c\in \calC_2^{\ftype}}\calE(c)=0$ and $\sum_{c\in \calC_2^{\mtype}}\calE(c)=2$.
\end{lemma}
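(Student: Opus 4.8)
The plan is to run Algorithm~\ref{alg:chilean} on $\calI$ explicitly, tracking the allocation after each step; there is no real obstacle here, only a computation, so the proof will be a careful trace.

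First I would pin down $\apportiong(\iparty)$. The party totals are $\calQ_1(\calI)=3\cdot 345+3\cdot 55=1200$ and $\calQ_2(\calI)=3\cdot 184+3\cdot 16=600$. Taking $\lambda=300$ gives $\calQ_1(\calI)/\lambda=4\in\ZZ$ and $\calQ_2(\calI)/\lambda=2\in\ZZ$ with sum $h=6$, so Lemma~\ref{lem:jefferson-one-dim}~\ref{jeff-proportional} yields $\apportiong(\iparty)=\{(4,2)\}$ for every signpost sequence $\gamma$; hence $\calJ=(4,2)$ and $\calT_{\calJ}$ allocates $4$ seats to party~$1$ and $2$ to party~$2$.

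Next I would describe Phase~1. Inside each party the three $\ftype$-candidates have strictly more votes than the three $\mtype$-candidates, so the order $\succ_{\calI}$ ranks, within a party, all $\ftype$-candidates above all $\mtype$-candidates. Therefore $\calT_{\calJ}$ picks the three $\ftype$-candidates and one $\mtype$-candidate of party~$1$ (its top $4$) and the two $\ftype$-candidates of party~$2$ (its top $2$), giving $5$ elected candidates of type $\ftype$ and $1$ of type $\mtype$. Since $h$ is even, the target is $3$ of each, the unbalance is $|5-1|=4\neq 0=(h\bmod 2)$, the over-represented type is $t^\star=\ftype$ and the under-represented type is $t_\star=\mtype$, so the algorithm enters Phase~2.

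Finally I would execute the \textbf{while} loop. Each swap removes an elected $\ftype$-candidate and adds an $\mtype$-candidate of the same party, decreasing the unbalance by $2$, so exactly two iterations occur. In the first iteration the worst-ranked elected $\ftype$-candidate lies in party~$2$ (party-$1$ $\ftype$-candidates have $345$ votes, party-$2$ ones have $184$), and it is replaced by the best available $\mtype$-candidate of party~$2$; in the second iteration the remaining elected party-$2$ $\ftype$-candidate is likewise replaced by another $\mtype$-candidate of party~$2$. After these two swaps the unbalance is $0$ and the loop halts, with party~$1$ keeping its three $\ftype$-candidates and its one $\mtype$-candidate and party~$2$ holding zero $\ftype$-candidates and two $\mtype$-candidates; this is exactly the claimed output. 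Since each party has three candidates of each type, all candidate pools invoked in Phase~2 are nonempty, so no step can get stuck. The only point requiring mild care is the tie-breaking among the equal-vote $\ftype$-candidates of a party, but since the statement concerns only the per-party, per-type sums, the particular tie-breaking is immaterial; I expect this to be the sole "subtlety," and it is a trivial one.
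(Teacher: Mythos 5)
Your proof is correct and follows essentially the same route as the paper's: compute $\calQ(\calI)=(1200,600)$ to get $\apportiong(\iparty)=\{(4,2)\}$ via Lemma~\ref{lem:jefferson-one-dim}~\ref{jeff-proportional}, trace Phase~1 to find five $\ftype$ and one $\mtype$ elected, and then observe that the two Phase~2 swaps both occur in party~2. Your remark that the tie-breaking among equal-vote candidates does not affect the per-party, per-type counts matches how the paper handles the same point (by leaving the swapped candidates' indices unspecified).
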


\begin{proof}
	We first show that for every signpost sequence $\gamma$ we have $\apportiong(\calQ(\calI))=\{(4,2)\}$.
	We have that $\calQ_1(\calI_n)=342\cdot 3+52\cdot 3+3\cdot 6=1200$, and  $\calQ_2(\calI_n)=181\cdot 3+13\cdot 3+3\cdot 6=600$.
	Therefore, it follows by Lemma \ref{lem:jefferson-one-dim} \ref{jeff-proportional}: The unique solution is given by taking $\lambda=300$, $\calQ_i(\calI_n)/\lambda\in \ZZ$ for each $i\in \{1,2\}$ and $\calQ_1(\calI)/\lambda+\calQ_2(\calI)/\lambda=4+2=6$.
	In Phase 1 the algorithm selects the top four candidates of party 1, and the top two candidates of party 2.
	The top four candidates of party 1 are given by $c_{1,1},c_{1,2},c_{1,3}$ of type $\ftype$ and one candidate $c_{1,j_1}$ with $j_1\in \{4,5,6\}$ of type $\mtype$.
	The top two candidates of party 2 are $c_{2,k_2}$ and $c_{2,\ell_2}$ of type $\ftype$ with $k_2,\ell_2\in \{1,2,3\}$.
	Therefore, at the end of Phase 1, the algorithm has selected a total of five candidates of type $\ftype$ and one candidates of type $\mtype$, that is, the overrepresented type is $t^{\star}=\ftype$ and $t_{\star}=\mtype$.
	
	From the set of selected candidates of type $\ftype$, we have that exactly two of them received 184 votes, while three of them received $345$ votes.
	The selected candidates that received 184 votes are exactly those given by $c_{2,k_2}$ and $c_{2,\ell_2}$ of type $\ftype$ and therefore they are replaced by the top two candidates of type $\mtype$ of the same party, which in this case corresponds to two candidates $c_{2,r_2}$ and $c_{2,s_2}$ with $r_2,s_2\in \{4,5,6\}$ that received 16 votes each.
	Parity has been achieved and the algorithm terminates with the following output: 
	$\{c\in \calC_1^{\ftype}:\calE(c)=1\}=\{c_{1,1},c_{1,2},c_{1,3}\}$, $\{c\in \calC_1^{\mtype}:\calE(c)=1\}=\{c_{1,j_1}\}$, $\{c\in \calC_2^{\ftype}:\calE(c)=1\}=\emptyset$ and $\{c\in \calC_2^{\mtype}:\calE(c)=1\}=\{c_{2,r_2},c_{2,s_2}\}$.
	That concludes the proof.
\end{proof}

\begin{proof}[Proof of Theorem~\ref{thm:chilean-breach}]
Let $\lambda_{1}=1/25$ and $\lambda_{2}=1/20$,
	and let $\mu_{\ftype}=1/23$ and $\mu_{\mtype}=1/3$ be the multipliers associated to the types.
	Let $\calF\in \RR^{2\times 2}$ be the matrix such that for each $i\in \{1,2\}$ and each type $t\in \{\ftype,\mtype\}$ we have $\calF_{it}=\lambda_{i}\mu_t\sum_{c\in \calC_{i}^t}\votes_{\calI}(c)$, that is,
	$\calF_{1\ftype}=1.8$, $\calF_{1\mtype}=2.2$,
	$\calF_{2\ftype}=1.2$ and $\calF_{2\mtype}=0.8$.
	Then, $(\calF,\lambda,\mu)$ satisfies conditions (\ref{fs1})-(\ref{fs3}) and the uniqueness of the fair share implies that $\calF$ is the fair share $\calF(\calE)$ of the two-dimensional instance $(\calP(\calI),\calJ(\calE),\phi(\calE))$.
	By Lemma~\ref{lem:chilean-LB-output}, we have $\sum_{c\in \calC_1^{\ftype}}\calE(c)>\lceil \calF_{1\ftype}\rceil$, $\sum_{c\in \calC_1^{\mtype}}\calE(c)<\lfloor \calF_{1\mtype}\rfloor$, $\sum_{c\in \calC_2^{\ftype}}\calE(c)<\lfloor \calF_{2\ftype}\rfloor$ and $\sum_{c\in \calC_2^{\mtype}}\calE(c)>\lceil \calF_{2\ftype}\rceil$.
\end{proof}
\newpage
\bibliographystyle{abbrv}
{\small \bibliography{references}}

\section{Appendix}

\begin{proof}[Proof of Lemma \ref{lem:jefferson-one-dim}]
Properties \ref{jeff-proportional}-\ref{jeff-scaling} come directly from the definition of a divisor method.
We now prove \ref{jeff-monotone}.
Suppose there exist $\calJ\in \apportiong(\calQ,h)$ and $\calJ'\in \apportiong(\calQ',h)$ such that $\calJ_{\pparty}< \calJ'_{\pparty}$. 
By the population monotonicity property \cite{BYbook}[Appendix A, p. 117], since $\calQ'_p/\calQ'_i>\calQ_p/\calQ_i$ for every $i\ne \pparty$ and $\calJ_{\pparty}< \calJ'_{\pparty}$, it holds that $\calJ'\le \calJ_i$ for every $i\ne \pparty$.
But then we have that $\sum_{j\in [n]}\calJ'_j<\sum_{j\in [n]}\calJ_j$, which is contradiction since the house size is equal in both instances. 
That concludes \ref{jeff-monotone}.

We now prove \ref{jeff-monotone-strong}.
Consider the instance obtained from $(\calQ,h)$ and any $\calJ\in \apportiong(\calQ,h)$ as follows: The parties set is $[n]\setminus \{\pparty\}$, the votes are given by $\calT_j=\calQ_j$ for each $j\ne \pparty$ and the house size is $h-\calJ_{\pparty}$.
In particular, the restriction of $\calJ$ to the parties in $[n]\setminus \{\pparty\}$ belongs to $\apportiong(\calT,h-\calJ_{\pparty})$.
Similarly, consider the instance obtained from $(\calQ',h)$ and any $\calJ'\in \apportiong(\calQ',h)$ as follows: The parties set is $[n]\setminus \{\pparty\}$, the votes are given by $\calT_j=\calQ'_j=\calQ_j$ for each $j\ne \pparty$ and the house size is $h-\calJ'_{\pparty}$.
In particular, the restriction of $\calJ'$ to the parties in $[n]\setminus \{\pparty\}$ belongs to $\apportiong(\calT,h-\calJ'_{\pparty})$.

Observe that by property \ref{jeff-monotone} we have that $\calJ_{\pparty}\ge \calJ'_{\pparty}$ for every $\calJ\in \apportiong(\calQ,h)$ and $\calJ'\in \apportiong(\calQ',h)$.
Fix a solution $\calJ\in \apportiong(\calQ,h)$ and fix a solution $\calJ'\in \apportiong(\calQ',h)$.
Define the vector $\calG$ with entries in $[n]\setminus \{\pparty\}$ such that $\calG_j=\calJ_j$ for every $j\ne \pparty$.
In particular, $\calG$ belongs to $\apportiong(\calT,h-\calJ_{\pparty})$.
By the house monotonicity property \cite{BYbook}[Appendix A, p. 117] the following holds: There exists $\calS\in \apportiong(\calT,h-\calJ'_{\pparty})$ such that $\calJ_j=\calG_{j}\le \calS_{j}$ for every $j\ne \pparty$.
For any such solution $\calS\in \apportiong(\calT,h-\calJ'_{\pparty})$ consider the vector $\calH$ defined as follows: $\calH_{\pparty}=\calJ'_{\pparty}$ and $\calH_{j}=\calS_{j}$ for every $j\ne \pparty$.
By the uniformity property of divisor methods \cite{balinski1980webster,BalinskiDemange1989} we have that $\calH\in \apportiong(\calQ',h)$ and it satisfies that $\calH_j\ge \calJ_j$ for every $j\ne \pparty$.
This concludes \ref{jeff-monotone-strong}.
\end{proof}
\end{document}